\numberwithin{equation}{section}
\pgfplotsset{compat=1.17}
\newcommand{\map}[3]{ #1 \colon #2 \to #3 }
\newcommand{\wildcard}{{\makebox[\widthof{$x$}][c]{$\cdot$}}}
\let\oldbm\bm
\renewcommand{\bm}[1]{{\oldbm{#1}}} 
\newcommand{\mb}[1]{{\mathbb{#1}}} 
\newcommand{\mc}[1]{{\mathcal{#1}}} 
\newcommand{\mf}[1]{{\mathfrak{#1}}} 
\newcommand{\ms}[1]{{\mathsf{#1}}} 
\newcommand{\bmf}[1]{{\oldbm{\mathfrak{#1}}}} 
\newcommand{\bms}[1]{{\oldbm{\mathsf{#1}}}} 
\newcommand{\obm}[1]{{\overline{\oldbm{#1}}}} 
\newcommand{\mo}[1]{{\mathring{#1}}}
\newcommand{\ol}[1]{{\overline{#1}}}
\newcommand{\ul}[1]{{\underline{#1}}}
\newcommand{\ac}[2]{\accentset{#1}{#2}}
\newcommand{\md}[1]{\ac{\diamond}{#1}}
\newcommand{\ceq}{\coloneqq}
\newcommand{\eqc}{\eqqcolon}
\newcommand{\R}{ \mb{R} }
\newcommand{\Z}{ \mb{Z} }
\newcommand{\N}{ \mb{N} }
\renewcommand\MT_delim_default_inner_wrappers:n [1]{
    \@namedef{MT_delim_\MH_cs_to_str:N #1 _star_wrapper:nnn}##1##2##3{
        \mathopen{}\mathclose\bgroup ##1 ##2 \aftergroup\egroup ##3
    }
    \@namedef{MT_delim_\MH_cs_to_str:N #1 _nostarscaled_wrapper:nnn}##1##2##3{
        \mathopen{##1}##2\mathclose{##3}
    }
    \@namedef{MT_delim_\MH_cs_to_str:N #1 _nostarnonscaled_wrapper:nnn}##1##2##3{
        \ifx.##1\else\mathopen##1\fi##2\ifx.##3\else\mathclose##3\fi
    }
}
\DeclarePairedDelimiter{\parn}{\lparen}{\rparen} 
\DeclarePairedDelimiter{\brac}{\lbrace}{\rbrace} 
\DeclarePairedDelimiter{\brak}{\lbrack}{\rbrack} 
\DeclarePairedDelimiter{\angl}{\langle}{\rangle} 
\DeclarePairedDelimiter{\abs}{\lvert}{\rvert} 
\DeclarePairedDelimiter{\norm}{\lVert}{\rVert} 
\DeclarePairedDelimiter{\floor}{\lfloor}{\rfloor}
\DeclarePairedDelimiter{\rest}{.}{\rvert}
\DeclareMathOperator{\id}{id}
\DeclareMathOperator{\lspan}{span}
\DeclareMathOperator{\ran}{ran}
\DeclareMathOperator*{\lesssimop}{\lesssim}
\DeclareMathOperator*{\equalop}{=}
\theoremstyle{plain}
\newtheorem{theorem}{Theorem}[section]
\newtheorem{corollary}[theorem]{Corollary}
\newtheorem{lemma}[theorem]{Lemma}
\newtheorem{proposition}[theorem]{Proposition}
\theoremstyle{definition}
\theoremstyle{remark}
\newtheorem{remark}[theorem]{Remark}
\crefname{enumi}{part}{parts} 
\newcommand{\loc}{\textnormal{loc}}
\title{Symmetric doubly periodic gravity-capillary waves with small vorticity}
\author[D. S. Seth]{Douglas S. Seth}
\address{Department of Mathematical Sciences, Norwegian University of Science and Technology, 7491 Trondheim, Norway}
\email{douglas.s.seth@ntnu.no}
\author[K. Varholm]{Kristoffer Varholm}
\address{Department of Mathematical Sciences, Norwegian University of Science and Technology, 7491 Trondheim, Norway}
\email{kristoffer.varholm@ntnu.no}
\author[E. Wahl\'en]{Erik Wahl\'en}
\address{Centre for Mathematical Sciences, Lund University, PO Box 118, 221 00 Lund, Sweden}
\email{erik.wahlen@math.lu.se}
\thanks{The authors are grateful to Josh Burby and Miles Wheeler for helpful discussions about~\cite{Lortz70Ueber}. This project has received funding from the European Research Council (ERC) under the European Union's Horizon 2020 research and innovation programme (grant agreement no. 678698) and the Swedish Research Council (grant no. 2020-00440).}
\begin{document}

\begin{abstract}
    We construct small amplitude gravity-capillary water waves with small nonzero vorticity, in three spatial dimensions, bifurcating from uniform flows. The waves are symmetric, and periodic in both horizontal coordinates. The proof is inspired by Lortz' construction of magnetohydrostatic equilibria in reflection-symmetric toroidal domains~\cite{Lortz70Ueber}. It relies on a global representation of the vorticity as the cross product of two gradients, and on prescribing a functional relationship between the Bernoulli function and the orbital periods of the water particles.

    The presence of the free surface introduces significant new challenges. In particular, the resulting free boundary problem is not elliptic, and the involved maps incur a loss of regularity under Fréchet differentiation. Nevertheless, we show that a version of the Crandall--Rabinowitz local bifurcation method still applies in this setting, by carefully tracking the loss of regularity.
\end{abstract}
\maketitle

\section{Introduction}
\subsection{The steady water wave problem}
The gravity-capillary water wave problem concerns the flow of an inviscid, incompressible fluid under the influence of gravity and surface tension. The fluid domain is bounded below by a rigid horizontal bottom $\brac{x_3=-d}$ and above by a free surface $\brac{x_3=\eta(t, \bm{x}')}$. We consider steady waves travelling with constant speed $c$ along the horizontal $x_1$-axis. That is, waves for which the dependence on $x_1$ and $t$ is through $x_1 - ct$ alone.

Using a frame of reference moving with the wave, we can assume that the velocity field $\bm{u}(\bm{x})$ and the surface $\eta(\bm{x}')$ are independent of time. The fluid domain is then given by
\begin{align*}
    \Omega^\eta & \ceq \brac{\bm{x} \in \R^3: -d < x_3 < \eta(\bm{x}')}, \\
    \shortintertext{with boundary $\partial \Omega^\eta = S^\eta \cup B$, where}
    S^\eta      & \ceq \brac{\bm{x} \in \R^3: x_3 = \eta(\bm{x}')}       \\
    \shortintertext{is the top surface and}
    B           & \ceq \brac{\bm{x} \in \R^3: x_3 = -d}
\end{align*}
the flat bottom. Here we use the notation $\bm{x}=(\bm{x}',x_3)=(x_1, x_2, x_3)$, while $\bm{u}=(u_1, u_2, u_3)$.

Assuming that the fluid has constant, unit density, the motion is governed by the incompressible Euler equations
\begin{subequations}
    \begin{gather}
        (\bm{u} \cdot \nabla)\bm{u} + \nabla p + g \bm{e}_3= 0,
        \label{eq:conservation_of_momentum}\\
        \nabla \cdot \bm{u} =0
    \end{gather}
\end{subequations}
in $\Omega^\eta$, where $p$ is the pressure and $g>0$ the constant gravitational acceleration. The corresponding kinematic boundary conditions read
\begin{subequations}
    \label{eq:kinematic}
    \begin{alignat}{-1}
        \bm{u} \cdot\bm{n} & = 0 & \qquad & \text{on $S^\eta$},
        \label{eq:kinematic_surface}                            \\
        u_3                & = 0 &        & \text{on $B$},
        \label{eq:kinematic_bed}
    \end{alignat}
\end{subequations}
where
\begin{equation}
    \label{eq:unit_normal}
    \bm{n}[\eta] \ceq \frac{(-\nabla \eta,1)}{\sqrt{1+\abs{\nabla \eta}^2}}
\end{equation}
is the upward-pointing unit normal on the surface. Their interpretation is simply that there is no flow through the boundary.

Finally, the dynamic boundary condition is the requirement that
\[
    p = \sigma \nabla \cdot \bm{n} \qquad \text{on $S^\eta$,}
\]
where $\sigma\ge 0$ is the coefficient of surface tension. One speaks of \emph{pure gravity waves} when $\sigma = 0$, and of \emph{gravity-capillary waves} when $\sigma > 0$. We will always assume the latter in this work, as the surface tension plays a crucial part.

We will concentrate on three-dimensional solutions, in the sense that the velocity field depends in a nontrivial way on all three spatial variables, while the surface pattern is truly two-dimensional. More specifically, we look for solutions that are \emph{doubly periodic}; that is, they are periodic in two distinct horizontal directions. Given wavelengths $\lambda_1, \lambda_2>0$, introduce the dual pair of lattices $\Lambda$ and $\Lambda^*$ through
\[
    \Lambda \ceq \lambda_1 \Z \times \lambda_2 \Z, \qquad \Lambda^* = \kappa_1 \Z \times \kappa_2 \Z,
\]
where $\lambda_i \kappa_i = 2\pi$. For convenience, we also let
\[
    \Omega_0^\eta \ceq \brac{\bm{x} \in \Omega^\eta : \bm{x}' \in (0,\lambda_1) \times (0,\lambda_2)}
\]
denote the fundamental domain. We will look for (horizontally) $\Lambda$-periodic solutions to the Euler equations. In fact, we will show a posteriori that the the solutions are actually periodic with respect to the finer diamond lattice
\begin{equation}
    \label{eq:lambda_diamond}
    \md{\Lambda}\ceq \brac[\bigg]{m\frac{\bm{\lambda}}{2}+n\frac{\bm{\lambda}^*}{2} : m,n\in \Z},
\end{equation}
where $\bm{\lambda} \ceq (\lambda_1,\lambda_2)$ and $\bm{\lambda}^* \ceq (\lambda_1,-\lambda_2)$.

We also impose the symmetry conditions
\begin{equation}
    \label{eq:u_eta_symmetry}
    \begin{aligned}
        \bm{u}(R_j) & = (-1)^j R_j \bm{u}, \\
        \eta(R_j)   & =\eta
    \end{aligned}
\end{equation}
for $j = 1,2$, where $R_j$ denotes reflection of the $j$th coordinate. More generally, we say that a vector field $\bm{f}$ is $(\pm)$-symmetric if it satisfies
\[
    \bm{f}(R_j) = \pm(-1)^j R_j\bm{f},
\]
and that a scalar field $\varphi$ is $(\pm)$-symmetric if
\[
    \varphi(R_j) = (\pm 1)^j \varphi.
\]
In particular,~\eqref{eq:u_eta_symmetry} entails that we look for $(+)$-symmetric $\bm{u}$ and $\eta$. Note that the curl operator flips the symmetry of a vector field, so that the \emph{vorticity}
\[
    \bm{\omega}\ceq \nabla \times \bm{u}
\]
is $(\mp)$-symmetric if $\bm{u}$ is $(\pm)$-symmetric. Moreover, if $\varphi$ is $(-)$-symmetric, then $\nabla \varphi$ is $(+)$-symmetric.

In contrast to most other studies of three-dimensional steady water waves (see e.g.~\cites{Craig00Travelling,Groves01Spatial,Groves07Three, Iooss09Small,Iooss11Asymmetrical, Reeder81Three, Sun93Three}), we do \emph{not} assume that the velocity field is irrotational. In fact, we explicitly seek to construct solutions with nonzero vorticity. This means, in particular, that $\bm{u}$ is not the gradient of a velocity potential. While the two-dimensional water wave problem with vorticity has received considerable attention in recent years (see e.g.~\cites{Constantin11Nonlinear,Constantin16Global, Strauss10Steady, Varvaruca16Recent, Haziot22Traveling} and references therein), the literature on the corresponding three-dimensional problem is very scarce. Recently, \textcite{Lokharu20Existence} proved the existence of small-amplitude doubly periodic solutions under the assumption that $\bm{u}$ is \emph{Beltrami}; or, more specifically, that $\bm{\omega}=\alpha \bm{u}$ for some constant $\alpha$. An alternative existence proof with a discussion of pure gravity waves can be found in~\cite{Groves23Analytical}, and an extension to internal waves in~\cite{Seth23Internal}. Apart from this, there is a non-existence result for steady three-dimensional waves in the case of constant $\bm{\omega}$~\cite{Wahlen14Non} (see also~\cite{Constantin11Two, Martin22Three, Chen23Rigidity} and references therein for related results), and some existence results for explicit Gerstner-type solutions in the setting of edge waves or geophysical fluid dynamics; see e.g.~\cites{Constantin01Edge, Constantin12Exact, Henry18Three} and references therein. In this work, we adopt a new approach.

\subsection{Lortz' ansatz}
Our existence result is inspired by an ansatz that \textcite{Lortz70Ueber} used to construct symmetric magnetohydrostatic equilibria in toroidal domains, with fixed boundaries. The fact that this idea can be used in the context of water waves follows from the well-known correspondence between the steady Euler equations and the magnetohydrostatic equations; where one identifies the velocity field with the magnetic field, and the vorticity with the current density. For simplicity, we explain Lortz' ansatz in the context of fluid mechanics.

By the identity
\[
    (\bm{u} \cdot \nabla)\bm{u}+ \bm{u} \times \bm{\omega} = \nabla\parn[\bigg]{\frac{1}{2}\abs{\bm{u}}^2}
\]
we have that~\eqref{eq:conservation_of_momentum} is equivalent to
\begin{equation}
    \label{eq:Bernoulli}
    \nabla H = \bm{u} \times \bm{\omega},
\end{equation}
where
\[
    H \ceq \frac{1}{2}\abs*{\bm{u}}^2 + p + gx_3
\]
is the so-called \emph{Bernoulli function}. Since
\[
    (\bm{u} \times \bm{\omega}) \cdot \bm{u} = (\bm{u} \times \bm{\omega}) \cdot \bm{\omega}= 0,
\]
this function is necessarily constant on both streamlines and vortex lines.

Recall also that a divergence free vector field can locally be written as the cross product of two gradients (see e.g.~\cite{Barbarosie11Representation} and references therein), which in particular applies to the vorticity $\bm{\omega}$. Part of Lortz' ansatz is to assume that this can be done globally, with one of the functions being the Bernoulli function.
In other words, we suppose that
\begin{equation}
    \label{eq:Clebsch}
    \bm{\omega} =\nabla H \times \nabla \tau
\end{equation}
for two potentials $H$ and $\tau$. At this stage, we momentarily forget about~\eqref{eq:Bernoulli}, and suppose that $H$ and $\tau$ are arbitrary. However, since
\begin{align*}
    \bm{u} \times \bm{\omega} & = \bm{u} \times (\nabla H \times \nabla \tau)                              \\
                              & = (\bm{u} \cdot \nabla \tau)\nabla H - (\bm{u} \cdot \nabla H) \nabla \tau
\end{align*}
under~\eqref{eq:Clebsch}, we recover~\eqref{eq:Bernoulli} if we further assume that
\begin{align}
    \bm{u} \cdot \nabla H    & = 0
    \label{eq:H_transport}          \\
    \shortintertext{and}
    \bm{u} \cdot \nabla \tau & = 1,
    \label{eq:tau_transport}
\end{align}
respectively, and in turn~\eqref{eq:conservation_of_momentum} if we let
\[
    p \ceq H - \frac{1}{2}\abs{\bm{u}}^2 - gx_3 + Q
\]
for some constant $Q$. It is important to note here that~\eqref{eq:Clebsch} is an implicit equation for $\bm{u}$, because $H$ and $\tau$ depend on $\bm{u}$ through~\eqref{eq:H_transport} and \eqref{eq:tau_transport}.

Let us now \emph{define} $\tau$ through the transport equation~\eqref{eq:tau_transport}, with the boundary condition
\[
    \rest*{\tau}_{x_1 = 0} = 0,
\]
which uniquely determines $\tau$ as long as $u_1$ has a definite sign.
The value $\tau(\bm{x})$ has an interpretation as the time it takes to travel to $\bm{x}$ from a point on the symmetry plane $\brac{x_1=0}$, along the streamline passing through $\bm{x}$. See \Cref{fig:time_functions}. In turn, we use $\tau$ to introduce
\[
    q \ceq \tau(\wildcard+\lambda_1\bm{e}_1) - \tau,
\]
which satisfies
\[
    \bm{u} \cdot \nabla q = 0
\]
by periodicity of $\bm{u}$, and is therefore constant on the streamlines.

\begin{figure}
    \centering
    \tikzsetnextfilename{time_functions}
    \begin{tikzpicture}
        \begin{axis}[clip=false,width=\textwidth,height=0.25\textheight,hide axis,xmin=0,xmax=2*pi,ymin=-1,ymax=1,zmin=-1,zmax=1]
            \filldraw[black, draw opacity = 1.0, fill opacity = 0.1,text opacity = 1.0] (axis cs:0,-1,-1) node[left] {$x_1 = 0$} -- (axis cs:0,1,-1) -- (axis cs:0,1,1) -- (axis cs:0,-1,1) -- cycle;
            \addplot3[black,variable=t,domain=0:{pi/2},samples y = 1] (t,0, {0.5*(cos(deg(t))-1)});
            \addplot3[black,dashed,variable=t,domain={pi/2}:{2*pi},samples y = 1] (t,0, {0.5*(cos(deg(t))-1)});
            \filldraw[black, draw opacity = 1.0, fill opacity = 0.1,text opacity = 1.0] (axis cs:2*pi,-1,-1) node[left] {$x_1 = \lambda_1$} -- (axis cs:2*pi,1,-1) -- (axis cs:2*pi,1,1) -- (axis cs:2*pi,-1,1) -- cycle;
            \draw[black] (axis cs:0,0,0) circle (2 pt) node[above right] {$t = 0$};
            \fill[black] (axis cs:pi/2,0,-0.5) circle (2 pt) node[below] {$\bm{{x}}$} node[above right]{$t = \tau(\bm{x})$};
            \draw[black] (axis cs:2*pi,0,0) circle (2 pt) node[above right] {$t = q(\bm{x})$};
        \end{axis}
    \end{tikzpicture}
    \caption{The figure shows a streamline of $\bm{u}$ between the planes $x_1=0$ and $x_1=\lambda_1$. A particle travelling along the streamline starting on the plane $x_1=0$ at time $t=0$ reaches the point $\bm{x}$ on the streamline at time $t=\tau(\bm{x})$. The function $q$ is constant along the streamline and its value is the time it takes the particle to reach the plane $x_1=\lambda_1$.}
    \label{fig:time_functions}
\end{figure}

Under the periodicity and symmetry assumptions on $\bm{u}$, all the streamlines are periodic, and we can interpret $q(\bm{x})$ as the orbital period of the particle at $\bm{x}$. That is, the time required to travel from the plane $\brac{x_1=0}$ to the plane $\brac{x_1=\lambda_1}$ along the streamline passing through $\bm{x}$. Moreover, we will see in \Cref{prop:transport_equation_existence_uniqueness} that $q$ is both $\Lambda$-periodic and $(+)$-symmetric. Making the further ansatz that $H \ceq h(q)$ for some differentiable function $h$, we do indeed have
\[
    \bm{u} \cdot \nabla H = h'(q)(\bm{u} \cdot \nabla q) = 0,
\]
whence~\eqref{eq:H_transport} is automatically fulfilled.

Based on the above computations, we therefore look for solutions of
\begin{subequations}
    \label{eq:unflattened_problem}
    \begin{align}
        \label{eq:unflattened_euler I}
        \nabla \times \bm{u} & = h'(q) \nabla q \times \nabla \tau, \\
        \label{eq:unflattened_euler II}
        \nabla \cdot \bm{u}  & = 0
    \end{align}
    in $\Omega^\eta$, together with the kinematic boundary conditions~\eqref{eq:kinematic_surface} and \eqref{eq:kinematic_bed}, the dynamic boundary condition
    \begin{equation}
        \label{eq:dynamic_boundary_condition}
        \frac{1}{2}\abs{\bm{u}}^2 + g\eta + \sigma \nabla \cdot \bm{n}-h(q) = Q
    \end{equation}
    on $S^\eta$, and the integral condition
    \begin{equation}
        \label{eq:integral_cond}
        \fint_{\Omega_0^\eta} u_1\,d\bm{x} = c,
    \end{equation}
\end{subequations}
where $\fint$ denotes the mean. The number $c$ can be thought of as a wave speed, and will be used as a bifurcation parameter. In particular, we consider it part of the solutions rather than a fixed constant.

Lortz used the above ansatz, combined with a fixed point argument, to solve the magnetohydrostatic problem in a \emph{fixed} domain. As we shall see, the fact that we have a free boundary, with the additional boundary condition~\eqref{eq:dynamic_boundary_condition}, introduces significant challenges to overcome.

\subsection{Trivial solutions and dispersion relation}
If we \emph{choose} the constant on the right-hand side of~\eqref{eq:dynamic_boundary_condition} to be
\[
    Q = Q(c) \ceq \frac{1}{2}c^2- h\parn[\bigg]{\frac{\lambda_1}{c}},
\]
for each $c \neq 0$, then
\begin{equation}
    \label{eq:trivial_solutions}
    (\ac{0}{\bm{u}}[c],\ac{0}{\eta}) = (c \bm{e}_1,0)
\end{equation}
become trivial solutions of~\eqref{eq:unflattened_problem}, with corresponding time functions given by
\begin{equation}
    \label{eq:trivial_time_functions}
    \ac{0}{\tau}[c] = \frac{x_1}{c}, \qquad \ac{0}{q}[c] = \frac{\lambda_1}{c}.
\end{equation}

If we linearise the problem around such a trivial solution, and look for solutions of the form $\eta(\bm{x}')=e^{i \bm{x}' \cdot \bm{k}}$ for $\bm{k} \in \Lambda^* \setminus \brac{\bm{0}}$, we will obtain the linear dispersion relation
\begin{equation}
    \label{eq:disp_rel}
    \ell_{\bm{k}}(c) \ceq g+\sigma\abs{\bm{k}}^2-c^2k_1^2 \mo{\mf{f}}_{\abs{\bm{k}}} = 0, \quad \text{where} \quad \mo{\mf{f}}_{k} \ceq \frac{\coth(kd)}{k},
\end{equation}
which is the same as the one for the irrotational water-wave problem (see \Cref{sec:Dynamic BC}). Note that solutions to this equation always come in quadruples when $k_2 \neq 0$, due to symmetry: If $\bm{k}$ is a solution, then so are $-\bm{k}$ and $\pm \bm{k}^*$.

The multiplicity of solutions to~\eqref{eq:disp_rel} has been studied by many authors, including \textcite{Reeder81Three}, who gave a detailed analysis. For completeness, and since we are using different parameters, we include a result similar to theirs. Its proof is deferred to \Cref{sec:Dynamic BC}.

\begin{proposition}
    \label{prop:F_kernel}
    There exists a cocountable set $\Sigma(\bm{\kappa},g,d) \subset \R^+$ such that $\pm\bm{\kappa}$ and $\pm\bm{\kappa}^*$ are the only solutions to~\eqref{eq:disp_rel} when
    \[
        c = c^*(\sigma) = \frac{1}{\kappa_1}\parn[\bigg]{\frac{g+\sigma\abs{\bm{\kappa}}^2}{\mo{\mf{f}}_{\abs{\bm{\kappa}}}}}^{1/2}
    \]
    and $\sigma \in \Sigma$. Here $\bm{\kappa} = (\kappa_1,\kappa_2)$.
\end{proposition}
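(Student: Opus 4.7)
The strategy is to plug $c = c^*(\sigma)$ into the dispersion relation $\ell_{\bm{k}}(c) = 0$, obtain a \emph{linear} equation in $\sigma$ indexed by $\bm{k} \in \Lambda^*$, and show that only countably many values of $\sigma$ satisfy any of these equations apart from the one associated with $\pm\bm{\kappa},\pm\bm{\kappa}^*$.

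Concretely, note first that $\ell_{\bm{k}}$ depends on $\bm{k}$ only through $k_1^2$ and $|\bm{k}|^2$, so the defining relation $\ell_{\bm{\kappa}}(c^*(\sigma)) = 0$ immediately yields $\ell_{\pm\bm{\kappa}}(c^*(\sigma)) = \ell_{\pm\bm{\kappa}^*}(c^*(\sigma)) = 0$ for every $\sigma$. Substituting $(c^*)^2 = (g + \sigma|\bm{\kappa}|^2)/(\kappa_1^2 \mo{\mf{f}}_{|\bm{\kappa}|})$ into $\ell_{\bm{k}}(c^*(\sigma))=0$ and multiplying by $\kappa_1^2 \mo{\mf{f}}_{|\bm{\kappa}|}$ gives
\[
    \sigma \bigl( |\bm{k}|^2 \kappa_1^2 \mo{\mf{f}}_{|\bm{\kappa}|} - |\bm{\kappa}|^2 k_1^2 \mo{\mf{f}}_{|\bm{k}|} \bigr) = g \bigl( k_1^2 \mo{\mf{f}}_{|\bm{k}|} - \kappa_1^2 \mo{\mf{f}}_{|\bm{\kappa}|} \bigr),
\]
which I abbreviate $A(\bm{k})\sigma = B(\bm{k})$. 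For each fixed $\bm{k}$ this equation has either no solution, a unique solution in $\sigma$, or is satisfied identically (only if $A(\bm{k}) = B(\bm{k}) = 0$).

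The crux is to rule out the identically-satisfied case for $\bm{k} \notin \{\pm\bm{\kappa},\pm\bm{\kappa}^*\}$. Suppose $A(\bm{k}) = B(\bm{k}) = 0$. Since $\kappa_1 > 0$ and $\mo{\mf{f}}_{|\bm{\kappa}|} > 0$, the equation $B(\bm{k}) = 0$ forces $k_1 \neq 0$ and $k_1^2 \mo{\mf{f}}_{|\bm{k}|} = \kappa_1^2 \mo{\mf{f}}_{|\bm{\kappa}|}$; feeding this into $A(\bm{k}) = 0$ then gives $|\bm{k}|^2 = |\bm{\kappa}|^2$. But then $\mo{\mf{f}}_{|\bm{k}|} = \mo{\mf{f}}_{|\bm{\kappa}|}$, so $k_1^2 = \kappa_1^2$ and consequently $k_2^2 = \kappa_2^2$, forcing $\bm{k} \in \{\pm\bm{\kappa},\pm\bm{\kappa}^*\}$, a contradiction.

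Hence, for every $\bm{k} \in \Lambda^* \setminus \{\bm{0},\pm\bm{\kappa},\pm\bm{\kappa}^*\}$ the equation $A(\bm{k})\sigma = B(\bm{k})$ admits at most one solution $\sigma = \sigma(\bm{k}) \in \R$. Let $\Sigma \ceq \R^+ \setminus \{\sigma(\bm{k}) : \bm{k} \in \Lambda^* \setminus \{\bm{0},\pm\bm{\kappa},\pm\bm{\kappa}^*\}\}$; since $\Lambda^*$ is countable, $\Sigma$ is cocountable, and by construction it has the claimed property. The only real obstacle is the degeneracy argument, but this is handled cleanly by the strict positivity of $\kappa_1^2 \mo{\mf{f}}_{|\bm{\kappa}|}$ together with the reconstruction $|\bm{k}|=|\bm{\kappa}|$, $k_1^2=\kappa_1^2$, $k_2^2=\kappa_2^2$; no subtler input on the monotonicity of $\mo{\mf{f}}$ is needed.
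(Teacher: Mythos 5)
Your proof is correct and follows essentially the same route as the paper's: substitute $c^*(\sigma)$ into $\ell_{\bm{k}}(c)=0$, observe that for each $\bm{k}$ this is an affine equation in $\sigma$, show that the degenerate (identically satisfied) case forces $\bm{k}\in\{\pm\bm{\kappa},\pm\bm{\kappa}^*\}$, and take $\Sigma$ to be the complement of the resulting countable exceptional set. The only cosmetic difference is that the paper first discards $k_1=0$ and divides through by $k_1^2\mo{\mf{f}}_{\abs{\bm{k}}}$ to work with $\varphi_{\bm{k}}=\abs{\bm{k}}^2/(k_1^2\mo{\mf{f}}_{\abs{\bm{k}}})$, whereas you keep the equation in cleared form; your degeneracy argument, which uses both vanishing coefficients to deduce $k_1^2=\kappa_1^2$ in addition to $\abs{\bm{k}}=\abs{\bm{\kappa}}$, is in fact spelled out slightly more completely than the paper's.
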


\subsection{Main result}
In order to state our main result, we first introduce some notation: When dealing with a mapping $T$ from some vector space $X$ (of any dimension) into a function space $Y$, we will typically use square brackets to denote the argument $x\in X$, so that $T[x]$ denotes a function in $Y$. We will refer to the space of all bounded $k$-linear maps from $X^k$ to $Y$ as $\mc{L}^k(X,Y)$, with $\mc{L}^0(X,Y)$ interpreted as simply $Y$.

For an open, non-empty set $\Omega \subset \R^n$, and $k\in \N_0\ceq \N \cup \brac{0}$, we let $C^k(\ol{\Omega})$ denote the class of functions $\map{f}{\ol{\Omega}}{\R}$ that are $k$ times continuously differentiable in $\Omega$, and whose derivatives are bounded and extend continuously to $\ol \Omega$. This is a Banach space when equipped with the usual norm
\[
    \norm{f}_k \ceq \sup_{\substack{j \leq k \\ \bm{x} \in \Omega}}{\norm{D^jf(\bm{x})}_{\mc{L}^j(\R^n,\R^n)}}.
\]
In addition, when $r\in(0,1)$, we let $C^{k+r}(\ol{\Omega})$ be the class of $f\in C^k(\ol{\Omega})$ whose derivative of order $k$ is $r$-Hölder. This is again a Banach space under the norm
\[
    \norm{f}_{k+r}\ceq \norm{f}_k + \sup_{\bm{x} \neq \bm{y} \in \Omega}{\frac{\norm{D^k f(\bm{x})-D^k f(\bm{y})}}{\abs{\bm{x}-\bm{y}}^r}}.
\]
We also use the notation $C^s_\loc(\ol{\Omega})$ for functions that belong to $C^s(\ol{U})$ for every bounded open $U\subset \Omega$, along with $C^s(\ol{\Omega}; \R^m)$ (or its local version) for vector fields.

\begin{theorem}
    \label{thm:main informal}
    Assume that $\sigma\in \Sigma$ and $c=c^*(\sigma)$, so that $\pm\bm{\kappa}$ and $\pm \bm{\kappa}^*$ are the only solutions of the dispersion equation~\eqref{eq:disp_rel}, and fix a non-integer $s>4$. Then, for every sufficiently small $h\in C^s(\R)$, there exists a family of nontrivial, $\md{\Lambda}$-periodic, $(+)$-symmetric solutions
    \[
        (\bm{u}[t],\eta[t],c[t]) \quad \in \quad C^{s}\parn[\big]{\ol{\Omega^{\eta[t]}}; \R^3}\times C^{s+1}(\R^2)\times \R
    \]
    for $0 < \abs{t} \ll 1$, to the problem~\eqref{eq:kinematic},~\eqref{eq:unflattened_problem}, bifurcating from the trivial solution $(\ac{0}{\bm{u}}[c^*], 0, c^*)$ at $t=0$. Moreover,
    \[
        \eta[t]=\ac{1}{\eta}t+o(t), \qquad \text{where} \qquad \ac{1}{\eta}(\bm{x}')\ceq\cos(\kappa_1 x_1)\cos(\kappa_2x_2),
    \]
    in $C^s(\R^2)$, and the solutions are rotational if $h'(\ac{0}{q}[c^*]) \neq 0$.
\end{theorem}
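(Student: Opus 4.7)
The plan is to recast \eqref{eq:unflattened_problem} as a bifurcation equation on a fixed domain, with $h$ treated as a small parameter and $(\eta,c)$ as the bifurcation unknowns, and then to apply a Crandall--Rabinowitz-type theorem adapted to maps that incur a finite loss of regularity under Fréchet differentiation.

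First, I would flatten the free surface via a diffeomorphism $\Phi[\eta]$ from the fixed strip $\mc{S}\ceq\R^2\times(-d,0)$ onto $\Omega^\eta$ (for instance the standard affine one), and pull back all unknowns to $\mc{S}$, transferring the $\eta$-dependence into the coefficients. The $(+)$-symmetry and $\Lambda$-periodicity are imposed throughout. Then I would build a solution operator $(\eta,c,h)\mapsto\bm{u}[\eta,c;h]$ satisfying all of \eqref{eq:unflattened_problem} except the dynamic boundary condition \eqref{eq:dynamic_boundary_condition}. Given a candidate $\bm{u}$ whose first component stays bounded away from zero, I use \Cref{prop:transport_equation_existence_uniqueness} to obtain $\tau$ from $\bm{u}\cdot\nabla\tau=1$ and form $q\ceq\tau(\wildcard+\lambda_1\bm{e}_1)-\tau$. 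The inhomogeneous div--curl system
\[
    \nabla\times\tilde{\bm{u}}=h'(q)\nabla q\times\nabla\tau,\qquad \nabla\cdot\tilde{\bm{u}}=0,
\]
together with the kinematic conditions \eqref{eq:kinematic_surface}--\eqref{eq:kinematic_bed}, the integral normalization \eqref{eq:integral_cond}, and the symmetry constraints, admits a unique solution $\tilde{\bm{u}}$ by a Biot--Savart-type argument on the strip (the symmetry pins down the harmonic freedom). For $h$ small, the map $\bm{u}\mapsto\tilde{\bm{u}}$ is a contraction on a suitable Hölder ball, and its fixed point defines $\bm{u}[\eta,c;h]$. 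I then encode the remaining equation as
\[
    \mc{F}[\eta,c;h]\ceq \Bigl(\tfrac12\abs{\bm{u}[\eta,c;h]}^2 + g\eta + \sigma\nabla\cdot\bm{n}[\eta] - h(q[\eta,c;h]) - Q(c)\Bigr)\Big|_{S^\eta},
\]
with $Q$ chosen as in \eqref{eq:dynamic_constant}, so that $\mc{F}[0,c;h]\equiv 0$ furnishes the trivial branch.

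Next, I would compute the linearization $D_\eta\mc{F}[0,c^*;0]$. Since $h=0$ wipes out the vortical contribution at leading order, a Dirichlet-to-Neumann computation (as outlined in \Cref{sec:Dynamic BC}) shows that this operator acts on the Fourier mode $e^{i\bm{k}\cdot\bm{x}'}$ as multiplication by $\ell_{\bm{k}}(c^*)$. By \Cref{prop:F_kernel}, the only solutions of the dispersion equation are $\pm\bm{\kappa}$ and $\pm\bm{\kappa}^*$, and the $(+)$-symmetry assembles the corresponding four modes into the single real function $\ac{1}{\eta}(\bm{x}')=\cos(\kappa_1x_1)\cos(\kappa_2x_2)$, giving a one-dimensional kernel and a codimension-one range (automatically $\md{\Lambda}$-periodic). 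The Crandall--Rabinowitz transversality condition amounts to $\partial_c\ell_{\bm{\kappa}}(c^*)\neq 0$, which is immediate from the strict monotonicity of $c\mapsto g+\sigma\abs{\bm{\kappa}}^2-c^2\kappa_1^2\mo{\mf{f}}_{\abs{\bm{\kappa}}}$ for $c>0$. All of these properties persist under small perturbations of $h$.

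Finally, I would apply a version of Crandall--Rabinowitz that permits the map to lose derivatives under differentiation; this is the main obstacle. The source of the loss is that the equation $\bm{u}\cdot\nabla(\delta\tau)=-\delta\bm{u}\cdot\nabla\tau$ has a right-hand side in $C^{s-1}$ when $\delta\bm{u}\in C^s$, since $\nabla\tau\in C^{s-1}$; hence $\delta\tau$ and $\delta q$ are only $C^{s-1}$. Thus $\mc{F}$ is continuous at the top regularity, but each Fréchet derivative costs one Hölder index. I would work on a scale of Hölder-type spaces and verify tame estimates $X_\sigma\to X_{\sigma-1}$ for $\mc{F}$ and the few derivatives needed for a Lyapunov--Schmidt reduction, exploiting the smoothing of the finite-rank kernel projection and the fact that the Fredholm inverse on the range complement is the inverse of an elliptic surface operator of order two (from the surface-tension term $\sigma\nabla\cdot\bm{n}[\eta]$), which amply recovers the single lost derivative. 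The reduced one-dimensional bifurcation equation is smooth in $t$ and yields the curve $t\mapsto(\eta[t],c[t])$ with $\eta[t]=\ac{1}{\eta}t+o(t)$ in $C^s(\R^2)$; setting $\bm{u}[t]\ceq\bm{u}[\eta[t],c[t];h]$ completes the construction. Rotationality when $h'(\ac{0}{q}[c^*])\neq 0$ is read off at first order from \eqref{eq:unflattened_euler I}, since $\nabla q$ is nondegenerate in the trivial limit while $\nabla\tau$ is close to $\bm{e}_1/c^*$.
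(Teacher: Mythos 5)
Your overall strategy -- flattening, an interior solution operator built from a div--curl solver plus the transport equation for $\tau$ and $q$, a contraction for small $h$, reduction to the dynamic boundary condition, and a Crandall--Rabinowitz/Lyapunov--Schmidt argument adapted to a one-derivative loss with transversality coming from $\partial_c\ell_{\bm{\kappa}}(c^*)=-2c^*\kappa_1^2\mo{\mf{f}}_{\abs{\bm{\kappa}}}\neq 0$ -- coincides with the paper's. However, your final step contains a genuine error.

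Rotationality cannot be ``read off at first order,'' because $\nabla q$ is \emph{degenerate}, not nondegenerate, near the trivial solution: at $t=0$ one has $\ac{0}{q}[c]=\lambda_1/c$, a constant, so $\nabla\ac{0}{q}=0$; and the first-order coefficient also vanishes, since the integral condition \eqref{eq:integral_cond_flattened} forces $\ac{1}{q}=-c^{-2}\int_0^{\lambda_1}\ac{1}{u}_1\,dx_1=0$. Consequently $\nabla q[c(t),t]=t^2\nabla\ac{2}{q}[c^*]+o(t^2)$ and the vorticity $\bm{\omega}=h'(q)\nabla q\times\nabla\tau$ is $O(t^2)$; establishing that it is genuinely nonzero requires computing the second-order coefficient $\ac{2}{q}$ explicitly (equation \eqref{eq:q2} of the paper) and checking, e.g., that $\partial_3\ac{2}{q}[c^*]>0$. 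This is why the paper carries out a full second-order expansion of $(\bm{u},\eta,\tau,q)$ in $t$ before proving \Cref{prop:nonzero_vorticity}; your first-order argument would only ever produce $\bm{\omega}=o(t)$ with no conclusion. A secondary, smaller omission: the $\md{\Lambda}$-periodicity of the \emph{solutions} (not just of $\ac{1}{\eta}$) does not come for free from the kernel; the paper obtains it by observing that $\eta[t](\wildcard+\bm{\lambda}/2)$ is another $(+)$-symmetric solution with the same kernel projection and invoking uniqueness of the reduced fixed point.
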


\begin{remark}
    From now on, $s$ will always refer to a non-integer in this paper.
\end{remark}

A more precise statement of the existence part of \Cref{thm:main informal}, with regularity properties of the solution branch with respect to the parameter $t$, is provided in \Cref{thm:Main_result}. There are two complications regarding this dependence: The first is that the domain $\Omega^{\eta[t]}$, in which $\bm{u}[t]$ is defined, depends on the parameter $t$. This is of course standard in free boundary problems, and is solved by considering a `flattened' version of the problem (see \Cref{sec:flattening}). The second is that, contrary to most studies of steady water waves, regularity with respect to $t$ comes at the price of losing regularity in $\bm{x}$. This phenomenon can be seen already in the expansion of the surface in \Cref{thm:main informal}, which is valid in $C^s$ rather than the expected $C^{s+1}$. The loss is related to the `elliptic-hyperbolic' nature of our problem, whereas most other versions of the steady water wave problem involve elliptic free boundary problems. In particular, there is a loss of derivatives in the dependence of the function $\tau$ on the vector field $\bm{u}$ (see \Cref{sec:transport}).

A somewhat similar phenomenon occurs in the irrotational three-dimensional pure gravity-wave problem~\cites{Iooss09Small,Iooss11Asymmetrical}. However, there the problem comes from the dynamic boundary condition, rather than the equations in the interior. Fortunately, our situation is more favourable; in the sense that we can solve the problem by an adaptation of the Crandall--Rabinowitz local bifurcation theorem, instead of needing to resort to a Nash--Moser scheme. This is accomplished with the implicit function and Banach fixed-point theorems, combined with carefully keeping track of the differentiability properties of fixed points. We need to do this when the involved mappings are only differentiable with a loss of regularity; see \Cref{Appendix:composition,Appendix:fixed_points}. The results in these appendices, and their proofs, draw inspiration from similar earlier results on scales of Banach spaces~\cites{Vanderbauwhede87Center,Hilger92Smoothness}.

The fact that the resulting solutions actually have nonzero vorticity is not completely obvious from the construction. We prove this after the fact in \Cref{prop:nonzero_vorticity}, by explicitly expanding the solution curve in terms of the bifurcation parameter $t$.

\subsection{Previous results}
We conclude this introduction with some historical remarks about doubly periodic steady water waves. The first rigorous construction of such waves in the irrotational setting is due to \textcite{Reeder81Three}, who considered periodic lattices of the same type as in~\eqref{eq:lambda_diamond}. Later, \textcite{Sun93Three} proved a similar result under the influence of pressure forcing. Waves with an arbitrary fundamental domain were constructed by \textcite{Craig00Travelling}, using a variational Lyapunov--Schmidt reduction and topological methods. Similar results can also be obtained using the spatial-dynamics method, in which the equations of motion are seen as an infinite-dimensional dynamical system where one of the horizontal spatial directions plays the role of time. Symmetric diamond waves were constructed by \textcite{Groves01Spatial}, while \textcite{Groves03Bifurcation} considered arbitrary fundamental domains (see also \textcite{Nilsson19Three} and \textcite{Ahmad23Resonant} for related results in other physical settings). As previously mentioned, the case of pure gravity waves is much more challenging, due to the appearance of small divisors. Nevertheless, existence theories for both symmetric and asymmetric doubly periodic waves have been established by \textcites{Iooss09Small,Iooss11Asymmetrical}.

Turning to water waves with vorticity, the result that is most closely related to \Cref{thm:main informal} is the construction of doubly periodic gravity-capillary waves on Beltrami flows by \textcite{Lokharu20Existence} (see also the recent alternative proof and discussion of pure gravity waves by \textcite{Groves23Analytical} and the extension to internal waves by \textcite{Seth23Internal}). Note that the interior equations are elliptic in the Beltrami setting, and therefore the problem can be solved by more standard bifurcation methods, like in the irrotational gravity-capillary problem. However, since the class of Beltrami fields is not invariant under the reflections $R_1$ and $R_2$, it does not make sense to look for $(+)$-symmetric solutions there. This was handled in~\cite{Lokharu20Existence} by using a multi-parameter bifurcation approach. The symmetry property is, on the other hand, a key part of our approach here. It is not even clear how to generalise Lortz' ansatz to a form that admits asymmetric solutions. The same symmetries $R_1$ and $R_2$ are also used in~\cite{Reeder81Three} in the reduction to a one-dimensional bifurcation problem.

\section{Reformulation of the problem using a flattening transformation}
\label{sec:flattening}

If we introduce the flattening transform $\map{\Pi}{\Omega^0}{\Omega^\eta}$ through
\[
    \Pi(\obm{x}) = \obm{x} + \pi(\obm{x})\bm{e}_3, \qquad \text{where} \qquad \pi(\obm{x}) = \eta(\obm{x}')\parn[\Big]{1+\frac{\ol{x}_3}{d}},
\]
then
\begin{equation}
    \label{eq:J}
    J \ceq D\Pi= \id_{\R^3} + \bm{e}_3 \otimes \nabla \pi,
\end{equation}
with
\begin{equation}
    \label{eq:pi_derivative}
    \nabla\pi(\obm{x}) = \parn[\bigg]{\nabla\eta(\obm{x}')\parn[\Big]{1+\frac{\ol{x}_3}{d}},\frac{\eta(\obm{x}')}{d}}.
\end{equation}
Consequently,
\begin{equation}
    \label{eq:rho_definition}
    \begin{aligned}
        \rho & \ceq \det(J) = 1 + \bm{e}_3 \cdot \nabla \pi \\
             & = 1 + \frac{\eta}{d},
    \end{aligned}
\end{equation}
and
\[
    J^{-1} = \id_{\R^3} - \frac{1}{\rho}\bm{e}_3 \otimes \nabla \pi.
\]

Given any scalar field $\varphi$ on $\Omega^\eta$, we define the corresponding \emph{flattened} scalar field $\ol{\varphi}$ on $\Omega^0$ through
\[
    \ol{\varphi} = \varphi(\Pi)(=\varphi \circ \Pi),
\]
while for any vector field $\bm{f}$ on $\Omega^\eta$, we define the flattened vector field $\obm{f}$ by
\begin{equation}
    \label{eq:flattening_of_vector_field}
    J\obm{f}=\rho \bm{f}(\Pi)
\end{equation}
instead. The next lemma describes the effect that this flattening has on the differential operators appearing in the Euler equations. Its proof is standard, and is thus omitted.

\begin{lemma}
    The gradient of a flattened scalar field is given by\footnote{We adopt the convention that $\nabla \varphi = (D\varphi)^\top$ is a column vector.}
    \begin{align}
        \nabla \ol{\varphi}                                         & = \frac{1}{\rho}J^\top J \ol{\nabla \varphi},
        \label{eq:gradient_flattening}                                                                              \\
        \shortintertext{while}
        \nabla \cdot \obm{f}                                        & = \rho \ol{\nabla \cdot \bm{f}}\notag         \\
        \shortintertext{and}
        \nabla \times \parn[\Big]{\frac{1}{\rho}J^{\top} J \obm{f}} & = \ol{\nabla \times \bm{f}}
        \label{eq:curl_flattening}
    \end{align}
    for vector fields. In particular, the property of being divergence free is conserved under flattening.
\end{lemma}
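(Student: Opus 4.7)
All three identities are classical consequences of the chain rule and the Piola identity, and my plan is to verify them by direct computation from the definitions. There is no serious obstacle in the argument; the only subtlety is that the overline has two different meanings (composition with $\Pi$ for scalar fields, but the Piola transform \eqref{eq:flattening_of_vector_field} for vector fields), and one must keep these separated. For the gradient formula I would apply the chain rule to $\ol{\varphi} = \varphi \circ \Pi$ to get $\nabla \ol{\varphi} = J^\top (\nabla \varphi)(\Pi)$ under the transpose convention, then specialise \eqref{eq:flattening_of_vector_field} to $\bm{f} = \nabla \varphi$ to obtain $(\nabla \varphi)(\Pi) = \rho^{-1} J\, \ol{\nabla \varphi}$. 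Substitution produces the stated expression.

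For the divergence formula the main tool is the Piola identity $\partial_i (\adj J)_{ij} = 0$, which I would establish by writing $(\adj J)_{ij} = \tfrac{1}{2}\varepsilon_{ikl}\varepsilon_{jmn}(\partial_k \Pi^m)(\partial_l \Pi^n)$ and exploiting the antisymmetry of $\varepsilon$ together with the commutation of mixed partial derivatives. Noting that $\obm{f} = \rho J^{-1} \bm{f}(\Pi) = (\adj J)\bm{f}(\Pi)$, expansion of $\nabla \cdot \obm{f}$ via the product rule kills the term in which the adjugate is differentiated, leaving $(\adj J)_{ij} J^k_i (\partial_k f_j)(\Pi)$. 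The contraction $(\adj J)_{ij} J^k_i = \rho\, \delta^k_j$ then collapses this to $\rho\, \ol{\nabla \cdot \bm{f}}$, and the `in particular' assertion is immediate since $\rho > 0$.

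For the curl formula, the key observation is that $\tfrac{1}{\rho} J^\top J \obm{f} = J^\top \bm{f}(\Pi)$ by \eqref{eq:flattening_of_vector_field}; this expression is precisely the pullback of the $1$-form associated with $\bm{f}$ under $\Pi$. Since exterior differentiation commutes with pullback, and the Hodge-dual identification of a $2$-form on $\Omega^0$ with a vector field coincides, up to the factor $\rho^{-1}$, with the Piola transform applied to $\nabla \times \bm{f}$, the identity follows. If a purely elementary argument is preferred, one can verify it componentwise from $(\nabla \times \bm{g})_i = \varepsilon_{ijk}\partial_j g_k$ with $\bm{g} = J^\top \bm{f}(\Pi)$, the relevant cancellations coming from $\varepsilon_{ijk}\partial_j\partial_k \Pi^m = 0$ together with the formula for $\adj J$ noted above.
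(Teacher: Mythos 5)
Your proposal is correct. The paper explicitly omits the proof of this lemma as ``standard,'' so there is nothing to compare against; the computations you give --- the chain rule combined with the definition $J\obm{f}=\rho\bm{f}(\Pi)$ for the gradient, the Piola identity $\partial_i(\adj J)_{ij}=0$ for the divergence, and the identification of $\frac{1}{\rho}J^\top J\obm{f}=J^\top\bm{f}(\Pi)$ with the pullback of a $1$-form (so that $d$ commuting with $\Pi^*$ yields the curl identity, the resulting $2$-form being dual to precisely the Piola transform of $\nabla\times\bm{f}$) --- are exactly the standard arguments the authors had in mind, and the final assertion follows from $\rho>0$ as you note.
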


Since
\begin{align*}
    J\ol{\nabla q \times \nabla \tau} & = \rho (\nabla q \times \nabla \tau)(\Pi)                                               \\
                                      & =\rho \parn[\big]{J^{-\top}\nabla \ol{q}} \times \parn[\big]{J^{-\top}\nabla \ol{\tau}} \\
                                      & = J \parn[\big]{\nabla \ol{q} \times \nabla\ol{\tau}}
\end{align*}
by~\eqref{eq:flattening_of_vector_field} and \eqref{eq:gradient_flattening}, we see that~\eqref{eq:unflattened_euler I} and \eqref{eq:unflattened_euler II} read simply
\begin{align*}
    \nabla \times \parn*{\frac{1}{\rho}J^\top J \obm{u}} & = h'(\ol{q}) \nabla \ol{q} \times \nabla \ol{\tau}, \\
    \nabla \cdot \obm{u}                                 & = 0,
\end{align*}
in the flattened domain. We therefore need to characterise $\ol{\tau}$ in a way that only involves flattened quantities. Using~\eqref{eq:flattening_of_vector_field} and \eqref{eq:gradient_flattening}, we find
\[
    1 = \bm{u} \cdot \nabla \tau =\parn*{\frac{1}{\rho}J\obm{u}}\cdot\parn[\big]{J^{-\top}\nabla\ol{\tau}} = \frac{1}{\rho}\obm{u}\cdot \nabla \ol{\tau},
\]
whence~\eqref{eq:tau_transport} becomes
\[
    \obm{u}\cdot \nabla \ol{\tau} = \rho
\]
in $\Omega^0$.

We also need to consider the boundary conditions. Observe that
\begin{align*}
    \rest*{J}_{S^0} & = \id_{\R^3} + \bm{e}_3 \otimes (\nabla \eta,\eta/d) \\
    \rest*{J}_B     & = \id_{\R^3} + \bm{e}_3 \otimes (0,\eta/d),
\end{align*}
from~\eqref{eq:J} and \eqref{eq:pi_derivative}, from which we obtain the simpler kinematic boundary condition
\[
    \ol{u}_3= 0
\]
on all of $\partial\Omega^0$. Meanwhile, the dynamic boundary condition~\eqref{eq:dynamic_boundary_condition} becomes
\[
    \frac{1}{2\rho^2}\parn[\big]{\abs{\obm{u}'}^2 + (\nabla \eta \cdot \obm{u}')^2} + g\eta + \sigma \nabla \cdot \bm{n}[\eta] -h(\ol{q}) = Q(c)
\]
on $S^0$. Here we recall that $\obm{u} = (\obm{u}',0)$ on $S^0$ under the kinematic condition, and that $\bm{n}$ is the unit normal defined in~\eqref{eq:unit_normal}.

Finally, the integral condition in~\eqref{eq:integral_cond} reads
\[
    \frac{1}{\abs{\Omega_0^\eta}} \int_{\Omega_0^0} \ol{u}_1\,d\obm{x} = \frac{1}{\abs{\Omega_0^\eta}}\int_{\Pi(\Omega_0^0)} \bm{u} \cdot \bm{e}_1\,d\bm{x} = \fint_{\Omega_0^\eta} u_1 \,d\bm{x} = c,
\]
after a change of variables, where one observes that
\[
    \abs{\Omega_0^\eta} = \int_{\Omega_0^0} \rho\,d\obm{x}.
\]

In the remainder, we will not hesitate to drop the bars on flattened quantities. To summarise, the problem in the flattened variables is
\begin{subequations}
    \label{eq:flatproblem}
    \begin{alignat}{-1}
        \label{eq:flattened_euler}
        \nabla \times \parn[\bigg]{\frac{1}{\rho}J^\top J \bm{u}} & = h'(q) \nabla q \times \nabla \tau & \qquad &                       \\
        \nabla \cdot \bm{u}                                       & = 0                                 &        & \text{in $\Omega^0$,}
        \label{eq:flattened_divergence}                                                                                                  \\
        \bm{u}\cdot \nabla \tau                                   & = \rho                              &        &
        \label{eq:flattened_time}
    \end{alignat}
    with
    \begin{equation}
        \label{eq:flattened_time_boundary}
        \rest*{\tau}_{x_1 = 0} = 0,\qquad \text{and} \qquad q \ceq \tau(\wildcard+\lambda_1\bm{e}_1) - \tau,
    \end{equation}
    under the integral condition
    \begin{equation}
        \label{eq:integral_cond_flattened}
        \frac{1}{\abs{\Omega_0^\eta}} \int_{\Omega_0^0} u_1\,d\bm{x} = c,
    \end{equation}
    and the boundary conditions
    \begin{equation}
        \label{eq:flattened_kinematic}
        u_3= 0 \qquad \text{on $\partial \Omega^0$}
    \end{equation}
    and
    \begin{equation}
        \label{eq:dynamic_bdry_flattened}
        \frac{1}{2\rho^2}\parn[\big]{\abs{\bm{u}'}^2 + (\nabla \eta \cdot \bm{u}')^2} +g\eta + \sigma \nabla \cdot \bm{n}[\eta]-h(q)= Q(c) \qquad \text{on $S^0$}.
    \end{equation}
\end{subequations}

\section{Reduction to the boundary}

The goal of this section is to reduce problem~\eqref{eq:flatproblem} to a single equation on the flattened surface $S^0$. We will do this by solving equations~\eqref{eq:flattened_euler}--\eqref{eq:flattened_kinematic} for $\bm{u}$, and then substituting their solution into the dynamic boundary condition~\eqref{eq:dynamic_bdry_flattened}. We begin by analysing the general div-curl problem
\begin{alignat*}{-1}
    \nabla \times \parn[\Big]{\frac{1}{\rho}J^\top J \bm{f}} & = \bm{g} & \qquad & \multirow{2}{*}{in $\Omega^0$,} \\
    \nabla \cdot \bm{f}                                      & = 0      &        &                                 \\
    f_3                                                      & =0       &        & \text{on $\partial \Omega^0$}
\end{alignat*}
in \Cref{sec:div-curl}. After this, we analyse the linear transport equation
\[
    \bm{u}\cdot \nabla g=f,
\]
in \Cref{sec:transport}, which will allow us to properly define the time functions $\tau$ and $q$. Finally, we solve the system~\eqref{eq:flattened_euler}--\eqref{eq:flattened_kinematic} using a fixed point argument in \Cref{sec:contraction}.

Before embarking on this program, we define the function spaces which will be used in the analysis.
For any $s > 0$, we define the spaces
\begin{align*}
    \mb{X}^{s}_{\pm}      & \ceq \brac[\big]{\bm{f} \in C^{s} \parn[\big]{\ol{\Omega^0};\R^3}: \text{$\bm{f}$ is $\Lambda$-periodic, $(\pm)$-symmetric, $\nabla \cdot \bm{f} = 0$}}, \\
    \mo{\mb{X}}^{s}_{\pm} & \ceq \brac[\big]{\bm{f} \in \mb{X}^{s}_{\pm}: \text{$f_3=0$ on $\partial \Omega_0^0$}},                                                                  \\
    \mb{Y}^{s}_{\pm}      & \ceq \brac[\big]{\varphi \in C^{s} (\R^2): \text{$\varphi$ is $\Lambda$-periodic, $(\pm)$-symmetric}},
\end{align*}
and also the open sets
\begin{align*}
    \mc{U}_\delta^s   & \ceq \brac{\bm{f} \in \mo{\mb{X}}_+^s : f_1 > \delta},                              \\
    \mc{V}_\epsilon^s & \ceq \brac{\varphi \in \mb{Y}_+^{s} : \varphi + d > 0, \norm{\varphi}_s < \epsilon}
\end{align*}
where the velocity field and surface profile will reside. Here $\epsilon,\delta \ll 1$ are positive constants, with $\delta$ being fixed throughout. We may have to shrink $\epsilon$ at points, and we remind the reader that $s$ is always non-integral.

\subsection{Div-curl problem}
\label{sec:div-curl}
For all $s > 1$ and $\eta \in \mc{V}_\epsilon^{s+1}$, we define the bounded linear operator
\[
    \map{C[\eta]}{\mo{\mb{X}}_+^s}{\mb{X}^{s-1}_- \times \R}
\]
through
\begin{equation}
    \label{eq:C_definition}
    C[\eta]\bm{f}\ceq \parn[\bigg]{\nabla \times \parn[\bigg]{\frac{1}{\rho}J^\top J \bm{f}},\frac{1}{\abs{\Omega_0^\eta}}\int_{\Omega_0^0} f_1\,d\bm{x}},
\end{equation}
for which we have the following result.
\begin{proposition}
    \label{prop:div_curl_problem}
    The map
    \[
        \map{C}{\mc{V}_\epsilon^{s+1}}{\mc{L}\parn[\big]{\mo{\mb{X}}_+^s;\mb{X}^{s-1}_- \times \R}}
    \]
    is analytic, and $C[\eta]$ is an isomorphism for every $\eta \in \mc{V}_\epsilon^{s+1}$ when $\epsilon$ is sufficiently small.
\end{proposition}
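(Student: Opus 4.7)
I would organize the proof in three stages: analyticity of $\eta \mapsto C[\eta]$, invertibility of $C[0]$, and extension to small $\eta$ by perturbation.

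\emph{Analyticity.} The operator $C[\eta]$ is built from the affine-in-$\eta$ quantities $J = \id_{\R^3} + \bm{e}_3 \otimes \nabla \pi$, $\rho = 1 + \eta/d$, and $|\Omega_0^\eta| = \lambda_1 \lambda_2 d + \int_{[0,\lambda_1]\times[0,\lambda_2]} \eta \, d\bm{x}'$. For $\epsilon$ sufficiently small, both $\rho$ and $|\Omega_0^\eta|$ remain bounded away from zero throughout $\mc{V}_\epsilon^{s+1}$, so their reciprocals depend real-analytically on $\eta$. Combined with the boundedness of pointwise multiplication $C^{s+1} \times C^s \to C^s$ and of $\nabla \times \colon C^s \to C^{s-1}$, this yields analyticity of $C$ into $\mc{L}(\mo{\mb{X}}_+^s; \mb{X}_-^{s-1} \times \R)$. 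It still must be checked that the image lies in $\mb{X}_-^{s-1}$: divergence-freeness of any curl is automatic, while a direct computation with the parities of $\partial_j \pi$ (using that $\eta$ is $(+)$-symmetric as a scalar) shows that $\rho^{-1} J^\top J$ preserves $(+)$-symmetry of vector fields, and the curl then flips $(+)$-symmetry to $(-)$ as remarked in the introduction.

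\emph{Invertibility at $\eta = 0$.} When $\eta = 0$, $\rho = 1$ and $J = \id$, so the problem reduces to solving, given $(\bm{g}, c) \in \mb{X}_-^{s-1} \times \R$, the system $\nabla \times \bm{f} = \bm{g}$, $\nabla \cdot \bm{f} = 0$ with $f_3 = 0$ on $\partial \Omega^0$ and $\fint_{\Omega_0^0} f_1 \, d\bm{x} = c$. I would expand in horizontal Fourier series. The $(+)$-symmetry of $\bm{f}$ forces the mean mode to take the form $\bm{f}_0(x_3) = (f_{0,1}(x_3), 0, 0)$, and the $(-)$-symmetry of $\bm{g}$ analogously forces $\bm{g}_0(x_3) = (0, g_{0,2}(x_3), 0)$, so the zero-mode curl equation reduces to $f_{0,1}'(x_3) = g_{0,2}(x_3)$, which is uniquely closed by the integral condition. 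For each $\bm{k} \neq 0$, taking the curl of $\nabla \times \bm{f} = \bm{g}$ produces the component-wise Poisson system $-\Delta \bm{f} = \nabla \times \bm{g}$, supplemented by $f_3 = 0$ and the boundary identities $\partial_{x_3} f_1 = g_2$, $\partial_{x_3} f_2 = -g_1$ on $\partial \Omega^0$ (derived from the curl equation together with the vanishing of horizontal derivatives of $f_3$ on the boundary). Standard Schauder estimates for this mixed elliptic problem give the two-derivative gain, and a uniqueness argument (showing that any solution of the auxiliary Poisson system with these boundary data automatically satisfies $\nabla \cdot \bm{f} = 0$ and $\nabla \times \bm{f} = \bm{g}$) assembles the modes into a bounded inverse $\mb{X}_-^{s-1} \times \R \to \mo{\mb{X}}_+^s$ with the expected single-derivative regularity gain.

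\emph{Perturbation to small $\eta$.} The set of isomorphisms is open in the operator-norm topology of $\mc{L}(\mo{\mb{X}}_+^s; \mb{X}_-^{s-1} \times \R)$, so the continuity of $\eta \mapsto C[\eta]$ (a consequence of analyticity) yields invertibility of $C[\eta]$ throughout a sufficiently small neighbourhood of $0$, completing the proof after possibly shrinking $\epsilon$. I expect the main technical obstacle to lie in the second step: extracting a clean single-derivative gain in Hölder scales from the coupled mixed Dirichlet/Neumann-type boundary system while simultaneously preserving the $(+)/(-)$ symmetry structure. The mean-mode analysis is the other delicate point, as the symmetry constraints make most mean components trivially zero and force one to reconcile the remaining scalar ODE with the single integral condition in order to rule out a nontrivial kernel.
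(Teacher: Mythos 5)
Your proposal is correct in outline and shares the paper's overall strategy (establish analyticity, prove that $C[0]$ is an isomorphism, then conclude for small $\eta$ by openness of the set of isomorphisms), but the construction of the inverse at $\eta=0$ is genuinely different. The paper handles surjectivity with a vector potential: it solves $\Delta\bm{h}=\bm{g}$ with $h_1=h_2=\partial_3h_3=0$ on $\partial\Omega^0$ and sets $\bm{f}=c\bm{e}_1-\nabla\times\bm{h}$, so that divergence-freeness and the kinematic condition are automatic, and the only verification needed is that $\nabla\cdot\bm{h}$ vanishes (it is harmonic with zero Dirichlet data). You instead solve the Poisson system $-\Delta\bm{f}=\nabla\times\bm{g}$ for $\bm{f}$ directly, with the mixed Dirichlet/Neumann data $f_3=0$, $\partial_3f_1=g_2$, $\partial_3f_2=-g_1$, and must then verify a posteriori that $\nabla\cdot\bm{f}=0$ and $\nabla\times\bm{f}=\bm{g}$. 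That verification is the one step you only sketch: it works, but it uses $\nabla\cdot\bm{g}=0$ (which holds since $\bm{g}\in\mb{X}_-^{s-1}$) both to show that $\nabla\cdot\bm{f}$ is harmonic with vanishing Neumann data and zero mean, and to show that $\bm{w}=\nabla\times\bm{f}-\bm{g}$ is harmonic with $w_1=w_2=0$ on the boundary and $w_3$ killed by the $R_2$-symmetry; these details should be written out. For injectivity the two arguments are essentially equivalent (harmonicity plus Dirichlet uniqueness for $f_3$, then Liouville/Fourier for the horizontal components, with the constant eliminated by symmetry and the integral condition). What your route buys is an explicit mode-by-mode description of the inverse, closer in spirit to \Cref{prop:bmf_derivative}; what the paper's route buys is a shorter verification, since the ansatz $\bm{f}=c\bm{e}_1-\nabla\times\bm{h}$ builds two of the three constraints into the construction.
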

\begin{proof}
    By continuity with respect to $\eta$, it is sufficient to show that $C[0]$ is an isomorphism. To that end, suppose first that $C[0]\bm{f}=0$ for some $\bm{f} \in \mo{\mb{X}}_+^s$. Then $\bm{f}$ satisfies
    \[
        \begin{aligned}
            \nabla \times \bm{f} & = 0 \\
            \nabla \cdot \bm{f}  & = 0
        \end{aligned}
        \quad
        \text{in $\Omega^0$,}
        \qquad
        f_3 = 0
        \quad
        \text{on $\partial\Omega^0$,}
        \quad
        \text{and}
        \quad
        \fint_{\Omega_0^0} f_1 \,d\bm{x} = 0,
    \]
    where taking the curl implies $\Delta \bm{f} = 0$. In particular, we must have $f_3 \equiv 0$ by uniqueness of the solution of the homogeneous Dirichlet problem. In turn,
    \[
        \nabla \times \bm{f} = \parn[\big]{-\partial_3 f_2, \partial_3 f_1, \partial_1 f_2 - \partial_2 f_1} = 0
    \]
    shows that $f_1, f_2$ are independent of $x_3$. They are therefore bounded harmonic functions on $\R^2$, and thus constant by Liouville's theorem. This constant vanishes by symmetry and the integral condition, whence $\bm{f} = 0$, and $C[0]$ is injective.

    For surjectivity, suppose that $\bm{g} \in \mb{X}^{s-1}_-$ and $c \in \R$. Standard theory for second order linear elliptic equations yields the existence of $\Lambda$-periodic and $(-)$-symmetric $\bm{h} \in C^{s+1} (\ol{\Omega^0};\R^3)$ such that
    \[
        \Delta \bm{h} = \bm{g} \quad \text{in $\Omega^0$} \qquad \text{and} \qquad h_1 = h_2 = \partial_3 h_3 = 0 \quad \text{on $\partial \Omega^0$,}
    \]
    and it is straightforward to check that
    \[
        \bm{f} \ceq c \bm{e}_1 - \nabla \times \bm{h} \in \mo{\mb{X}}_+^s
    \]
    satisfies $C[0]\bm{f} = (\bm{g},c)$.
\end{proof}

Due to \Cref{prop:div_curl_problem}, it makes sense to define $S[\eta] \in \mc{L}\parn{\mb{X}^{s-1}_-;\mo{\mb{X}}_+^s}$ and $\bmf{u}[\eta] \in \mo{\mb{X}}_+^s$ by
\begin{align*}
    S[\eta]\bm{g} & \ceq C[\eta]^{-1}(\bm{g},0), \\
    \bmf{u}[\eta] & \ceq C[\eta]^{-1}(0,1)
\end{align*}
when $\eta \in \mc{V}_\epsilon^{s+1}$, so that
\[
    C[\eta]^{-1}(\bm{g},c) = c \bmf{u}[\eta] + S[\eta]\bm{g},
\]
where we in particular note that $\bmf{u}[0] = \bm{e}_1$. The next result follows directly from~\eqref{eq:C_definition}.

\begin{lemma}
    \label{lemma:derivative_C}
    The derivative of $C$ at $\eta = 0$ is given by
    \[
        \angl{DC[0]\dot{\eta},\bm{f}} = \parn[\bigg]{\nabla \times (B[\dot{\eta}]\bm{f}),-\frac{1}{d}\fint_{\Omega_0^0} \dot{\eta}\,d\bm{x}\fint_{\Omega_0^0} f_1\,d\bm{x}},
    \]
    with
    \[
        B[\dot{\eta}] \ceq 2 \bm{e}_3 \odot \nabla \pi[\dot{\eta}] - \frac{1}{d}\dot{\eta}\id_{\R^3},
    \]
    where $\bm{x} \odot \bm{y} = (\bm{x} \otimes \bm{y} + \bm{y} \otimes \bm{x})/2$.
\end{lemma}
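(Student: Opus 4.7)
The plan is to compute the two components of the Fréchet derivative of $C[\eta]$ at $\eta=0$ separately, exploiting that $\bm{f}\mapsto C[\eta]\bm{f}$ is linear in $\bm{f}$, so that $\eta$-differentiation commutes with the $\bm{f}$-dependence, and that the curl operator is itself independent of $\eta$. Thus for the first (curl) component it suffices to linearise the matrix field $A[\eta]\ceq \rho^{-1}J^\top J$ at $\eta=0$ and to apply $\nabla\times$ afterwards.

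Expanding $J=\id_{\R^3}+\bm{e}_3\otimes \nabla\pi$ and using the elementary tensor identity $(\bm{a}\otimes \bm{b})(\bm{c}\otimes \bm{d})=(\bm{b}\cdot \bm{c})\bm{a}\otimes \bm{d}$, I would compute
\[
    J^\top J = \id_{\R^3}+\nabla \pi\otimes \bm{e}_3+\bm{e}_3\otimes \nabla\pi+\nabla\pi\otimes \nabla\pi = \id_{\R^3}+2\bm{e}_3\odot \nabla \pi+O(\abs{\eta}^2),
\]
using that $\pi$, and hence $\nabla\pi$, is linear in $\eta$, so the cross term is already of second order. Combining with $\rho^{-1}=1-\eta/d+O(\abs{\eta}^2)$ and collecting first-order terms gives
\[
    A[\eta]=\id_{\R^3}+2\bm{e}_3\odot \nabla\pi-\frac{\eta}{d}\id_{\R^3}+O(\abs{\eta}^2),
\]
whose Gâteaux derivative in direction $\dot\eta$ is exactly $B[\dot\eta]$, since $\pi$ depends linearly on $\eta$ through the map $\eta\mapsto \pi[\eta]$.

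For the second (mean) component, the integrand $f_1$ is integrated over the fixed flat domain $\Omega_0^0$, so only the prefactor $\abs{\Omega_0^\eta}^{-1}$ requires differentiation. From the identity $\abs{\Omega_0^\eta}=\int_{\Omega_0^0}\rho\,d\bm{x}$ recorded in \Cref{sec:flattening} we have $D\abs{\Omega_0^\eta}[0]\dot\eta=d^{-1}\int_{\Omega_0^0}\dot\eta\,d\bm{x}$, and the quotient rule together with $\abs{\Omega_0^0}=\lambda_1\lambda_2 d$ yields exactly the stated product of two means, including the minus sign and the factor $1/d$.

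The argument is a routine linearisation; the only point that requires care is the tensor-product algebra when expanding $J^\top J$ and recognising its first-order symmetric part as $2\bm{e}_3\odot \nabla\pi$, which directly produces the operator $B[\dot\eta]$.
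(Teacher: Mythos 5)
Your computation is correct and is exactly the direct linearisation of \eqref{eq:C_definition} that the paper has in mind (the paper omits the proof, stating only that the lemma "follows directly" from the definition of $C$): the expansion $J^\top J=\id_{\R^3}+2\bm{e}_3\odot\nabla\pi+\nabla\pi\otimes\nabla\pi$ together with $\rho^{-1}=1-\eta/d+O(\eta^2)$ gives the first component, and the quotient rule applied to $\abs{\Omega_0^\eta}^{-1}=\bigl(\int_{\Omega_0^0}\rho\,d\bm{x}\bigr)^{-1}$ gives the second. No gaps.
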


If $\dot{\eta}\in \mb{Y}_+^{s+1}$, then we can write
\[
    \dot{\eta}(\bm{x}') = \sum_{\bm{k} \in \Lambda^*} \dot{\eta}_{\bm{k}} e^{i \bm{k} \cdot \bm{x}'},
\]
where the coefficients are real and satisfy the symmetry relations
\[
    \dot{\eta}_{R_j\bm{k}} = \dot{\eta}_{\bm{k}}
\]
for $j=1,2$. We will use this to describe the derivative $D\bmf{u}[0]$.
\begin{proposition}
    \label{prop:bmf_derivative}
    One has
    \[
        D\bmf{u}[0] \dot{\eta} = \parn[\Big]{\frac{1}{d}\dot{\eta},0, - \partial_1 \pi[\dot{\eta}]} + \nabla\partial_1\chi[\dot{\eta}],
    \]
    for all $\dot{\eta} \in \mb{Y}_+^{s+1}$, where
    \[
        \chi \ceq\sum_{\bm{k} \in \Lambda^* \setminus \brac{\bm{0}}} \mf{f}_{\abs{\bm{k}}}\dot{\eta}_{\bm{k}} e^{i \bm{k} \cdot \bm{x}'}, \qquad \mf{f}_k(x_3) \ceq \frac{\cosh(k(x_3+d))}{k\sinh(k d)}
    \]
    makes $\bm{w}\ceq \nabla \partial_1\chi$ the unique solution to
    \begin{equation}
        \label{eq:Linearized_problem}
        \nabla \times \bm{w}=0 \quad \text{in $\Omega_0^0$}, \qquad w_3 =\partial_1 \pi[\dot{\eta}] \quad \text{on $\partial \Omega_0^0$}, \quad \text{and} \quad \int_{\Omega_0^0} w_1\,d\bm{x} = 0
    \end{equation}
    in $\mb{X}_+^s$.
\end{proposition}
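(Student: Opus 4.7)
My plan is to differentiate the defining identity for $\bmf{u}$, substitute the result into \Cref{lemma:derivative_C}, and then explicitly exhibit a solution of the resulting linear system via the proposed ansatz.

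\emph{Step 1: Differentiation.} By definition, $C[\eta]\bmf{u}[\eta]=(0,1)$ for every $\eta \in \mc{V}_\epsilon^{s+1}$, with $\bmf{u}[0]=\bm{e}_1$. Analyticity of $C$ from \Cref{prop:div_curl_problem} permits differentiating at $\eta=0$ to obtain
\[
    C[0]\,D\bmf{u}[0]\dot\eta = -\angl{DC[0]\dot\eta,\bm{e}_1}.
\]
Using \Cref{lemma:derivative_C}, a direct computation gives $B[\dot\eta]\bm{e}_1 = \partial_1\pi[\dot\eta]\,\bm{e}_3 - \tfrac{1}{d}\dot\eta\,\bm{e}_1$, and hence
\[
    \angl{DC[0]\dot\eta,\bm{e}_1} = \parn[\Big]{\nabla \times \parn[\big]{\partial_1\pi\,\bm{e}_3 - \tfrac{1}{d}\dot\eta\,\bm{e}_1},\ -\tfrac{1}{d}\fint_{\Omega_0^0}\dot\eta\,d\bm{x}}.
\]
At $\eta=0$ the operator $C[0]$ reduces to $C[0]\bm{v} = (\nabla\times\bm{v},\ \fint_{\Omega_0^0} v_1\,d\bm{x})$, so $\bm{v} \ceq D\bmf{u}[0]\dot\eta \in \mo{\mb{X}}_+^s$ must satisfy
\[
    \nabla \times \bm{v} = -\nabla\times\parn[\big]{\partial_1\pi\,\bm{e}_3 - \tfrac{1}{d}\dot\eta\,\bm{e}_1},\qquad \fint_{\Omega_0^0} v_1\,d\bm{x} = \tfrac{1}{d}\fint_{\Omega_0^0}\dot\eta\,d\bm{x}.
\]

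\emph{Step 2: Ansatz and reduction to the linearised problem.} I split $\bm{v} = \bm{v}_0 + \bm{w}$ with the explicit choice $\bm{v}_0 \ceq (\dot\eta/d,\, 0,\, -\partial_1\pi[\dot\eta])$. Using $\partial_3\pi = \dot\eta/d$, an elementary calculation shows $\nabla\cdot\bm{v}_0=0$, that the curl of $\bm{v}_0$ exactly cancels the right-hand side above, and that $\fint (v_0)_1\,d\bm{x}= \fint\dot\eta\,d\bm{x}/d$; these properties are easily checked componentwise. The required symmetries follow from the Fourier expansion $\dot{\eta}_{R_j\bm{k}}=\dot{\eta}_{\bm{k}}$. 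Consequently, $\bm{w}$ must lie in $\mb{X}_+^s$ and satisfy
\[
    \nabla \times \bm{w} = 0,\quad \nabla\cdot\bm{w}=0 \text{ in $\Omega^0$,}\quad w_3 = \partial_1\pi[\dot\eta]\text{ on $\partial\Omega^0$,}\quad \int_{\Omega_0^0} w_1\,d\bm{x} = 0,
\]
which is precisely the linearised problem \eqref{eq:Linearized_problem}.

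\emph{Step 3: Explicit solution via $\chi$ and uniqueness.} I verify by substitution that $\bm{w}=\nabla\partial_1\chi$ solves this problem. The relations $\mf{f}_k''=k^2 \mf{f}_k$, $\mf{f}_k'(0)=1$, and $\mf{f}_k'(-d)=0$ yield respectively $\Delta\chi=0$ (hence $\nabla\cdot\bm{w}=\partial_1\Delta\chi=0$ and $\nabla\times\bm{w}=0$), $\partial_3\partial_1\chi|_{x_3=0}=\partial_1\dot\eta = \partial_1\pi[\dot\eta]|_{x_3=0}$, and $\partial_3\partial_1\chi|_{x_3=-d}=0=\partial_1\pi[\dot\eta]|_{x_3=-d}$. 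The vanishing of the mean of $w_1=\partial_1^2\chi$ follows from the exclusion of the zero mode in the expansion of $\chi$, and $\Lambda$-periodicity together with $(+)$-symmetry of $\bm{w}$ are inherited directly from the Fourier representation. The regularity $\bm{w}\in C^s_\loc$ follows from standard Schauder theory for the Neumann problem once it is observed that $\chi$ is the harmonic extension with Neumann data $\dot\eta-\dot\eta_{\bm{0}}$ at the top and $0$ at the bottom. Uniqueness in $\mb{X}_+^s$ is immediate: if $\bm{w}'$ is another solution, then $\bm{w}-\bm{w}' \in \mo{\mb{X}}_+^s$ lies in the kernel of $C[0]$, and \Cref{prop:div_curl_problem} gives $\bm{w}=\bm{w}'$.

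The only mildly subtle point is the bookkeeping in Step 2 — one has to make sure the ansatz $\bm{v}_0$ simultaneously matches the curl equation, the integral condition, and the divergence-free requirement, leaving only the irrotational boundary-value problem for $\bm{w}$; everything else is a straightforward verification or a direct appeal to \Cref{prop:div_curl_problem}.
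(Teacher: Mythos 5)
Your proposal is correct and follows essentially the same route as the paper: differentiate the identity $C[\eta]\bmf{u}[\eta]=(0,1)$, compute $\angl{DC[0]\dot\eta,\bm{e}_1}$ via \Cref{lemma:derivative_C}, and check that the stated candidate is mapped by $C[0]$ to the correct right-hand side, with uniqueness coming from the invertibility of $C[0]$. The only difference is that you verify the properties of $\chi$ and the well-posedness of \eqref{eq:Linearized_problem} directly, where the paper outsources this to \cite{LokharuSethWahlen20}*{Section 3}; your verifications are accurate.
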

\begin{proof}
    The problem in~\eqref{eq:Linearized_problem} is a special case of the one in~\cite[Section 3]{Lokharu20Existence}. By differentiating the identity $C[\eta]\bmf{u}[\eta] = (0,1)$, we find
    \[
        \angl{DC[0]\dot{\eta},\bm{e}_1} + C[0]D\bmf{u}[0]\dot{\eta} = 0,
    \]
    or
    \[
        D\bmf{u}[0]\dot{\eta} = C[0]^{-1}\parn[\bigg]{\nabla \times \parn[\bigg]{\frac{1}{d}\dot{\eta},0, - \partial_1 \pi[\dot{\eta}]},\frac{1}{d}\fint_{\Omega_0^0} \dot{\eta}\,d\bm{x}}
    \]
    after applying \Cref{lemma:derivative_C}. Now, clearly
    \[
        C[0]\parn[\bigg]{\overbrace{\parn[\bigg]{\frac{1}{d}\dot{\eta},0, - \partial_1 \pi[\dot{\eta}]} + \nabla\partial_1\chi}^{\in \mo{\mb{X}}_+^s}}=\parn[\bigg]{\nabla \times \parn[\bigg]{\frac{1}{d}\dot{\eta},0, - \partial_1 \pi[\dot{\eta}]},\frac{1}{d}\fint_{\Omega_0^0} \dot{\eta}\,d\bm{x}}. \qedhere
    \]
\end{proof}
As a consequence of the analyticity of $C$, we also get analyticity of $C^{-1}$. This, in turn, implies the following result, which we shall have need for later.
\begin{lemma}
    \label{lemma:S^su_and_derivatives_lipschitz}
    We have
    \begin{align*}
        \norm{D^k S[\eta_1]-D^k S[\eta_2]}_{\mc{L}^k\parn[\big]{\mb{Y}^{s+1}_+, \mc{L}\parn[\big]{\mb{X}^{s-1}_-,\mo{\mb{X}}_+^s}}} & \lesssim_{k,s} \norm{ \eta_1-\eta_2}_{s+1}, \\
        \norm{D^k \bmf{u}[\eta_1]-D^k \bmf{u}[\eta_2]}_{\mc{L}^k\parn[\big]{\mb{Y}^{s+1}_+,\mo{\mb{X}}_+^s}}                        & \lesssim_{k,s} \norm{ \eta_1-\eta_2}_{s+1}
    \end{align*}
    for all $\eta_1,\eta_2 \in \mc{V}_\epsilon^{s+1}$ and $k\in \N_0$.
\end{lemma}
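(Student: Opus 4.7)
The plan is to deduce both Lipschitz bounds from the analyticity of $C$ asserted in \Cref{prop:div_curl_problem}. First I would note that inversion of bounded linear operators is an analytic operation on the open set of isomorphisms in $\mc{L}(\mo{\mb{X}}_+^s;\mb{X}_-^{s-1}\times\R)$; composed with $C$, this makes $\eta \mapsto C[\eta]^{-1}$ analytic on $\mc{V}_\epsilon^{s+1}$ (shrinking $\epsilon$ slightly so that $C[\eta]$ is an isomorphism on some open neighbourhood of $\ol{\mc{V}_\epsilon^{s+1}}$). Evaluating at the constant inputs $(\bm{g},0)$ and $(0,1)$ from \eqref{eq:S^definition}--\eqref{eq:uh_definition}, the maps $\eta \mapsto S[\eta]$ and $\eta \mapsto \bmf{u}[\eta]$ are themselves analytic, with values in $\mc{L}(\mb{X}_-^{s-1};\mo{\mb{X}}_+^s)$ and $\mo{\mb{X}}_+^s$ respectively.

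Next I would apply the standard Cauchy-type estimates for Banach-valued analytic maps to obtain, for each $k \in \N_0$, a uniform bound $\norm{D^{k+1} S[\eta]} \le M_{k,s}$ on $\mc{V}_\epsilon^{s+1}$, and similarly $\norm{D^{k+1} \bmf{u}[\eta]} \le N_{k,s}$. Since $\mc{V}_\epsilon^{s+1}$ is convex --- it is the intersection of the convex set $\brac{\varphi + d > 0}$ with the open ball $\brac{\norm{\varphi}_{s+1} < \epsilon}$ --- the fundamental theorem of calculus gives
\[
    D^k S[\eta_1] - D^k S[\eta_2] = \int_0^1 D^{k+1} S\bigl[(1-t)\eta_2 + t\eta_1\bigr](\eta_1 - \eta_2)\,dt,
\]
where the right-hand side is understood as a $k$-linear map obtained by feeding $\eta_1 - \eta_2$ into one slot of the $(k+1)$-linear derivative. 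Bounding the integrand pointwise by $M_{k,s}\norm{\eta_1 - \eta_2}_{s+1}$ in the operator norm on $\mc{L}^k(\mb{Y}_+^{s+1},\mc{L}(\mb{X}_-^{s-1},\mo{\mb{X}}_+^s))$ yields the first estimate, and the argument for $\bmf{u}$ is identical.

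The only substantive ingredient is the analyticity claim from \Cref{prop:div_curl_problem}, which is already in hand: $C[\eta]$ depends polynomially on $\eta$, $\nabla \eta$, and $1/\rho = 1/(1+\eta/d)$, and the latter is an analytic function of $\eta$ on $\mc{V}_\epsilon^{s+1}$ since $\rho$ stays uniformly bounded away from zero there. Consequently the lemma is essentially a black-box consequence of analyticity plus convexity of the parameter domain, and the only place where one must be careful is in the preliminary shrinking of $\epsilon$ so that the Cauchy estimates deliver uniform bounds on all of $\mc{V}_\epsilon^{s+1}$ rather than merely pointwise ones.
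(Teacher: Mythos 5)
The paper gives no explicit proof of this lemma — it simply asserts that the estimates follow from the analyticity of $C$ (hence of $C^{-1}$) established in \Cref{prop:div_curl_problem} — and your argument via analyticity of operator inversion, Cauchy estimates for the derivatives, and convexity of $\mc{V}_\epsilon^{s+1}$ is precisely the standard way to fill in that assertion. Your proposal is correct and takes essentially the same route the paper intends.
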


\subsection{The steady linear transport equation}
\label{sec:transport}

We will now give a rigorous definition of the functions $\tau$ and $q$.
We begin with a general existence and uniqueness result for a linear transport equation.
\label{sec:transport_equation}

\begin{proposition}
    \label{prop:transport_equation_existence_uniqueness}
    Suppose that $s > 1$, and that $\bm{u}\in \mc{U}_\delta^s$. Then
    \begin{equation}
        \label{eq:transport_problem}
        \begin{cases}
            \bm{u} \cdot \nabla g= f, \\
            \rest{g}_{x_1=0}=0,
        \end{cases}
    \end{equation}
    has a unique solution $ g\in C_\loc^s(\ol{\Omega^0})$ for each $ f \in C_\loc^s(\ol{\Omega^0})$. Furthermore, for any $r_0 > 0$ we have
    \begin{equation}
        \label{eq:transport_bound}
        \norm{ g}_{s|r_0} \lesssim_{\norm{\bm{u}}_s,\delta,r_0} \norm{ f}_{s|r}
    \end{equation}
    for all $r \gg r_0$, where $\norm{\wildcard}_{s|r}$ is shorthand for the norm on $C^s(\ol{B}_r(0,\ol{\Omega^0}))$.

    Moreover, if
    \[
        f \in \mb{Y}_+^s(\ol{\Omega^0})\ceq \brac[\big]{\varphi \in C^{s} (\ol{\Omega^0}): \text{$\varphi$ is $\Lambda$-periodic, $(+)$-symmetric}},
    \]
    and $ G \ceq g(\wildcard + \lambda_1 \bm{e}_1) - g$, then
    \begin{enumerate}[(i)]
        \item $ g(\wildcard + \lambda_2 \bm{e}_2) = g$,
              \label{item:g_periodic_in_x2}
        \item $ g$ is $(-)$-symmetric,
              \label{item:g_symmetric}
        \item $ G \in \mb{Y}_+^s(\ol{\Omega^0})$, and
              \label{item:G_periodic_and_symmetric}
        \item $\nabla G \times \nabla g \in \mb{X}_-^{s-1}$,
              \label{item:rhs_periodic_and_symmetric}
    \end{enumerate}
    with $\norm{ G}_s \lesssim_{\norm{\bm{u}}_s,\delta} \norm{ f}_s$ and $\norm{\nabla G \times \nabla g}_{s-1} \lesssim_{\norm{\bm{u}}_s,\delta} \norm{ f}_s^2$.
\end{proposition}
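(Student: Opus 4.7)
My plan is to construct $g$ explicitly along characteristics. Because $u_1 > \delta > 0$ and $u_3 = 0$ on $\partial\Omega^0$, every point of $\ol{\Omega^0}$ lies on a unique characteristic curve of $\bm{u}$ that stays in $\ol{\Omega^0}$; reparametrizing by $x_1$, I write the characteristic through $\bm{p} = (0, p_2, p_3)$ as $\gamma(s;\bm{p}) = (s,\Phi(s;\bm{p}))$, where $\Phi = (\Phi_2,\Phi_3)$ solves the reduced ODE $\partial_s\Phi_j = (u_j/u_1)(s,\Phi)$ for $j=2,3$ with $\Phi(0;\bm{p}) = (p_2,p_3)$. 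Since $\bm{u}/u_1 \in C^s_\loc$, standard ODE theory yields a $C^s_\loc$ flow, and
\[
g(\bm{x}) = \int_0^{x_1}\frac{f}{u_1}\bigl(s,\Phi(s;\bm{p}(\bm{x}))\bigr)\,ds,
\]
where $\bm{p}(\bm{x})$ denotes the intersection with $\{x_1=0\}$ of the characteristic through $\bm{x}$, is the unique solution of \eqref{eq:transport_problem}. The local estimate \eqref{eq:transport_bound} follows from this formula: characteristics issuing from $B_{r_0}$ stay in some larger ball $B_r$ by a Gr\"onwall argument, and derivatives up to order $s$ are controlled by the same Gr\"onwall-type stability together with the composition and product rules in $C^s$.

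Parts (i) and (ii) I would read off from uniqueness. The three functions $g(\wildcard + \lambda_2\bm{e}_2)$, $-g\circ R_1$ and $g\circ R_2$ each satisfy \eqref{eq:transport_problem} with the same right-hand side and vanish on $\{x_1 = 0\}$, thanks to the $(+)$-symmetry of $\bm{u}$ and the $(+)$-symmetry and $\Lambda$-periodicity of $f$; each therefore coincides with $g$.

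The only delicate step is the $\lambda_1$-periodicity in (iii). Differentiating $G$ and using $\lambda_1$-periodicity of $\bm{u}$ and $f$ gives $\bm{u}\cdot\nabla G = 0$, so $G$ is constant along streamlines. The key structural point is that the $(+)$-symmetry of $\bm{u}$ makes $u_j/u_1$ odd in $x_1$ for $j=2,3$, so $s\mapsto\Phi(-s;\bm{p})$ satisfies the same ODE as $\Phi(\wildcard;\bm{p})$ with the same initial data; by uniqueness, $\Phi(-s;\bm{p}) = \Phi(s;\bm{p})$. Combined with the group relation $\Phi(s+\lambda_1;\bm{p}) = \Phi(s;T(\bm{p}))$ from $\lambda_1$-periodicity of the ODE (where $T(\bm{p})\ceq \Phi(\lambda_1;\bm{p})$ is the Poincar\'e map), evaluation at $s=-\lambda_1$ and a second invocation of evenness yield $\bm{p} = \Phi(-\lambda_1;T(\bm{p})) = \Phi(\lambda_1;T(\bm{p})) = T^2(\bm{p})$, so $T$ is an involution and every streamline is $2\lambda_1$-periodic in $s$. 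Setting $F(s)\ceq (f/u_1)(s,\Phi(s;0,y_2,y_3))$, the integral formula then shows
\[
G(0,y_2,y_3) = \int_{-\lambda_1}^0 F(s)\,ds = \int_0^{\lambda_1} F(s)\,ds = G(\lambda_1,y_2,y_3),
\]
where the first equality is direct (using $\lambda_1$-periodicity of $f, u_1$ in the first argument), the second uses evenness of $F$, and the third uses its $2\lambda_1$-periodicity. Since $G$ and $G(\wildcard+\lambda_1\bm{e}_1)$ both solve $\bm{u}\cdot\nabla(\wildcard) = 0$ and every streamline meets $\{x_1=0\}$, this agreement on the plane propagates throughout $\Omega^0$. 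The $\lambda_2$-periodicity of $G$ comes from (i), and its $(+)$-symmetry from (ii) combined with the $\lambda_1$-periodicity just established.

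For (iv), the cancellation $\nabla G\times\nabla G = 0$ and the relation $g(\wildcard+\lambda_1\bm{e}_1) = g + G$ give
\[
(\nabla G \times \nabla g)(\bm{x}+\lambda_1\bm{e}_1) = \nabla G(\bm{x})\times\bigl(\nabla g(\bm{x})+\nabla G(\bm{x})\bigr) = (\nabla G\times\nabla g)(\bm{x}),
\]
so the cross product is $\lambda_1$-periodic; $\lambda_2$-periodicity follows from (i). Its $(-)$-symmetry and divergence-freeness are routine parity and algebraic checks. Finally, $\norm{G}_s\lesssim \norm{f}_s$ and $\norm{\nabla G\times\nabla g}_{s-1}\lesssim\norm{f}_s^2$ come from the integral representation of $G$, the product rule in $C^{s-1}$, and the local estimate \eqref{eq:transport_bound} applied on a ball containing a fundamental cell, using $\Lambda$-periodicity of $f$.
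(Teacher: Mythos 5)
Your proof is correct and follows essentially the same route as the paper's: characteristics and the explicit integral formula for existence, uniqueness, and the bound; uniqueness of the transport problem for (i)--(ii); flow symmetries for the $\lambda_1$-periodicity of $G$ in (iii); and direct algebra for (iv). Two minor remarks: the paper actually proves the stronger fact that the time-$\lambda_1$ return map is the \emph{identity} (by splitting the flow at $\lambda_1/2$ and using evenness), not merely an involution as you derive --- though $T^2=\id$ does suffice for your computation of $G$ --- and the $C^s$ regularity of the flow for non-integer $s$ is less ``standard ODE theory'' than you suggest: the paper first extends $\bm{u}/u_1$ to all of $\R^3$ via Whitney's extension theorem and then invokes specific Hölder-regularity results for flows and for compositions to get \eqref{eq:transport_bound} with the stated dependence on $\norm{\bm{u}}_s$ and $\delta$.
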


\begin{proof}
    It is easily seen that if~\eqref{eq:transport_problem} has a solution, and if $\bm{\Phi}_{t,\mo{t}}(\bm{y})$ is the flow of $( u_2, u_3)/ u_1$ in the sense that
    \begin{equation}
        \label{eq:flow_definition}
        \begin{cases}
            \dot{\bm{\Phi}}_{t,\mo{t}}=( u_2/ u_1, u_3/ u_1)\parn[\big]{t,\bm{\Phi}_{t,\mo{t}}}, \\
            \bm{\Phi}_{\mo{t},\mo{t}}(\bm{y})=\bm{y},
        \end{cases}
    \end{equation}
    then this solution must be given by
    \begin{equation}
        \label{eq:transport_solution}
        g(\bm{x}) = \int_0^{x_1} \parn[\bigg]{ \frac{f}{u_1}}\parn[\big]{t,\bm{\Phi}_{t,x_1}(x_2, x_3)}\,dt,
    \end{equation}
    which yields both existence and uniqueness. As for regularity and the bound in~\eqref{eq:transport_bound}, note first that the vector field $(u_2/ u_1, u_3/ u_1)$ can be extended to a vector field in $C^s_\loc(\R^3; \R^2)$ by a variant of Whitney's extension theorem~\cite[Theorem 4, Chapter VI]{Stein70Singular}. Since the result we wish to prove is local in nature, we can even assume that this vector field is uniformly $C^s$. Now, an application of~\cite[Theorem A.6]{Eldering13Normally} yields that $\Phi_{t,\mo{t}}(\bm{y})$ is $C_\loc^s$ with respect to $(\mo{t},\bm{y})$ (locally) uniformly in $t$. Then, using~\eqref{eq:flow_definition}, we find that $\Phi$ is $C_\loc^s$ with respect to $(t,\mo{t}, \bm{y})$. Finally, an application of~\cite[Theorem 4.3]{Llave99Regularity} to~\eqref{eq:transport_solution} yields the regularity of $g$ and the bound in~\eqref{eq:transport_bound}.

    We immediately have \cref{item:g_periodic_in_x2} by uniqueness and the corresponding periodicity of $\bm{u}$ and $ f$. Similarly,
    \[
        \bm{u} \cdot \nabla ((-1)^j g(R_j)) = ((-1)^jR_j\bm{u}) \cdot \nabla g(R_j) = \bm{u}(R_j) \cdot \nabla g(R_j) = f(R_j) = f
    \]
    shows \cref{item:g_symmetric}. Apart from $ G$ inheriting~\ref{item:g_periodic_in_x2} and $ G(R_2)= G$, \cref{item:G_periodic_and_symmetric} is slightly less trivial, and it is simpler to appeal to the flow defined in~\eqref{eq:flow_definition}.

    By the properties $\bm{u}(\wildcard + \lambda_1 \bm{e}_1)= \bm{u}$ and $\bm{u}(R_1) = -R_1 \bm{u}$ one has
    \begin{align}
        \bm{\Phi}_{t + \lambda_1,\mo{t}+\lambda_1} & = \bm{\Phi}_{t,\mo{t}},
        \label{eq:flow_shift}                                                  \\
        \bm{\Phi}_{-t,0}                           & = \bm{\Phi}_{t,0}, \notag
    \end{align}
    through uniqueness of the flow, whence
    \[
        \bm{\Phi}_{\lambda_1,0} =\bm{\Phi}_{\lambda_1,\lambda_1/2} \circ \bm{\Phi}_{\lambda_1/2,0} = \bm{\Phi}_{0,-\lambda_1/2} \circ \bm{\Phi}_{-\lambda_1/2,0} = \id_{\R \times (-d,0)},
    \]
    and more generally $\bm{\Phi}_{t+\lambda_1,0} =\bm{\Phi}_{t,0}$, again appealing to uniqueness. In turn,
    \begin{equation}
        \label{eq:flow_periodicity}
        \bm{\Phi}_{t+\lambda_1,\mo{t}} = \bm{\Phi}_{t,\mo{t}+\lambda_1} = \bm{\Phi}_{t,\mo{t}}
    \end{equation}
    for all $t,\mo{t} \in \R$, generalizing~\eqref{eq:flow_shift}.

    Employing~\eqref{eq:transport_solution} and \eqref{eq:flow_periodicity}, we find that $ G$ is given by
    \[
        G(\bm{x}) =\int_0^{\lambda_1} \parn*{\frac{f}{u_1}}\parn[\big]{t-x_1,\bm{\Phi}_{t-x_1,x_1}(x_2, x_3)}\,dt,
    \]
    from which it follows, by periodicity of the integrand, that $ G(\wildcard+\lambda_1 \bm{e}_1) = G$. Using this, we also see that
    \begin{align*}
        G(R_1) & = g(R_1\wildcard + \lambda_1 \bm{e}_1)- g(R_1) = g(R_1(\wildcard -\lambda_1 \bm{e}_1))- g(R_1) \\
               & = G(\wildcard - \lambda_1 \bm{e}_1) = G,
    \end{align*}
    and therefore that $ G \in \mb{Y}_+^s(\ol{\Omega^0})$.

    For \cref{item:rhs_periodic_and_symmetric}, define $ \bm{v} \ceq \nabla G \times \nabla g$. The periodicity with respect to $x_2$ is immediate, and we see that
    \[
        \bm{v}(\wildcard + \lambda_1 \bm{e}_1) - \bm{v} = \nabla G \times \nabla G = 0
    \]
    by definition and periodicity of $ G$, and
    \[
        \bm{v}(R_j) = \nabla G(R_j) \times \nabla g(R_j) = R_j \nabla G \times (-1)^jR_j \nabla g = -(-1)^j \bm{v}
    \]
    by the symmetries of $g$ and $G$. Finally
    \[
        \nabla \cdot \bm{v} = (\nabla \times \nabla G)\cdot \nabla g - (\nabla \times \nabla g)\cdot \nabla G = 0. \qedhere
    \]
\end{proof}

\Cref{prop:transport_equation_existence_uniqueness} tells us that it makes sense to define a solution map $\map{g}{\mc{U}_\delta^s \times C^s_\loc(\ol{\Omega^0})}{C^s_\loc(\ol{\Omega^0})}$ through~\eqref{eq:transport_problem}, for any $s > 1$. Note that this map is linear with respect to $f$, and bounded in the sense of~\eqref{eq:transport_bound}. We will require regularity properties of this map, where the key observation
is that $ g$ satisfies
\begin{equation}
    \label{eq:g_f_difference}
    g[\bm{u}, f_1] - g[\bm{u}, f_2] = g[\bm{u}, f_1- f_2]
\end{equation}
when $s > 1$, $\bm{u} \in \mc{U}_\delta^s$, $f_1,f_2 \in C_\loc^s(\ol{\Omega^0})$, and
\begin{equation}
    \label{eq:g_u_difference}
    g[\bm{u}_1, f] - g[\bm{u}_2, f] = g\brak[\big]{\bm{u}_1,-(\bm{u}_1-\bm{u}_2)\cdot \nabla g[\bm{u}_2, f]}
\end{equation}
when $s > 2$, $u_1 \in \mc{U}_\delta^{s-1}$, $u_2 \in \mc{U}_\delta^s$ and $f \in C_\loc^s(\ol{\Omega^0})$. As an immediate consequence of~\eqref{eq:g_f_difference} and \eqref{eq:g_u_difference}, we obtain the following, where all bounds must be understood in the same sense as that in~\eqref{eq:transport_bound}.

\begin{proposition}[Lipschitz continuity]
    \label{prop:g_lipschitz_continuity}
    When $s > 1$, one has
    \[
        \norm{g[\bm{u}, f_1] - g[\bm{u}, f_2]}_s \lesssim_{\norm{\bm{u}}_s,\delta} \norm{ f_1 - f_2}_s
    \]
    for all $\bm{u} \in \mc{U}_\delta^s$, $f_1,f_2 \in C_\loc^s(\ol{\Omega^0})$, while
    \[
        \norm{ g[\bm{u}_1, f]- g[\bm{u}_2, f]}_{s-1} \lesssim_{\norm{\bm{u}_1}_{s-1},\norm{\bm{u}_2}_s,\delta} \norm{\bm{u}_1-\bm{u}_2}_{s-1} \norm{ f}_s
    \]
    for all $u_1 \in \mc{U}_\delta^{s-1}$, $u_2 \in \mc{U}_\delta^s$ and $f \in C_\loc^s(\ol{\Omega^0})$ when $s > 2$.
\end{proposition}

The solution map has the partial derivative
\begin{equation}
    \label{eq:g_derivative_f}
    \partial_fg[\bm{u},f]h = g[\bm{u},h]
\end{equation}
with respect to $f$; for every $s > 1$, $u \in \mc{U}_\delta^{s}$, and $f,h \in C_\loc^s(\ol{\Omega^0})$. Similarly, one has the \emph{formal} partial derivative
\begin{equation}
    \label{eq:g_derivative_u}
    \partial_{\bm{u}}g[\bm{u},f]\bm{v} \ceq -g\brak[\big]{\bm{u},\bm{v} \cdot \nabla g[\bm{u},f]}
\end{equation}
with respect to $\bm{u}$, for every $s > 2$, $\bm{u} \in \mc{U}_\delta^s$, $f \in C_\loc^s(\ol{\Omega^0})$, and $\bm{v} \in \mo{\mb{X}}_+^{s-1}$.

\begin{proposition}[First derivatives]
    \label{prop:g_first_derivative_u}
    We have
    \begin{align*}
        \norm{\partial_fg[\bm{u},f]}_{\mc{L}(C_\loc^s(\ol{\Omega^0}),C_\loc^s(\ol{\Omega^0}))}        & \lesssim_{\norm{\bm{u}}_s,\delta} 1,         \\
        \norm{\partial_{\bm{u}}g[\bm{u},f]}_{\mc{L}(\mo{\mb{X}}_+^{s-1},C_\loc^{s-1}(\ol{\Omega^0}))} & \lesssim_{\norm{\bm{u}}_s,\delta} \norm{f}_s
    \end{align*}
    for the operators defined in~\eqref{eq:g_derivative_f} and \eqref{eq:g_derivative_u}, respectively. The latter operator is a derivative in the sense that, if $s>3$, then
    \begin{equation}
        \label{eq:g_differentiability_u}
        \norm[\big]{\Delta_{(\bm{v},0)}g[\bm{u},f]-\partial_{\bm{u}}g[\bm{u},f]\bm{v}}_{s-2} \lesssim_{\norm{\bm{u}}_s, \norm{\bm{v}}_s,\delta} \norm{f}_s\norm{\bm{v}}_{s-1}^{2}
    \end{equation}
    for all $\bm{u},\bm{u}+\bm{v} \in \mc{U}_\delta^s$, and $f \in C_\loc^s(\ol{\Omega^0})$. Here, $\Delta$ denotes a forward finite difference. Moreover, when $s>3$, we have
    \[
        \norm[\big]{\partial_{\bm{u}}g[\bm{u}_1,f]-\partial_{\bm{u}}g[\bm{u}_2,f]}_{\mc{L}(\mo{\mb{X}}_+^{s-1},C_\loc^{s-2}(\ol{\Omega^0}))} \lesssim_{\norm{\bm{u}_1}_{s-1},\norm{\bm{u}_2}_s,\delta}\norm{f}_s \norm{\bm{u}_1-\bm{u}_2}_{s-1}
    \]
    when $\bm{u}_1 \in \mc{U}_\delta^{s-1}$, $\bm{u}_2 \in \mc{U}_\delta^s$, $f \in C_\loc^s(\ol{\Omega^0})$.
\end{proposition}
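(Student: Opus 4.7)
The plan is to reduce each of the four estimates to an application of the basic transport bound \eqref{eq:transport_bound} and of the Lipschitz continuity from \Cref{prop:g_lipschitz_continuity}, using the algebraic identities \eqref{eq:g_f_difference} and \eqref{eq:g_u_difference}. The two operator bounds are the quickest. Since $\partial_f g[\bm{u},f]h = g[\bm{u},h]$ is itself a transport solution, \eqref{eq:transport_bound} gives $\norm{\partial_f g[\bm{u},f]h}_s \lesssim \norm{h}_s$ directly. For $\partial_{\bm{u}} g$, read off the definition \eqref{eq:g_derivative_u}: first bound $\norm{g[\bm{u},f]}_s \lesssim \norm{f}_s$, so that $\norm{\nabla g[\bm{u},f]}_{s-1} \lesssim \norm{f}_s$, multiply by $\bm{v}\in\mo{\mb{X}}^{s-1}_+$ in $C^{s-1}$, and apply \eqref{eq:transport_bound} one more time at regularity $s-1$.

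For the differentiability claim \eqref{eq:g_differentiability_u}, I would use \eqref{eq:g_u_difference} to write the forward difference exactly as
\[
    \Delta_{(\bm{v},0)}g[\bm{u},f] = g\brak[\big]{\bm{u}+\bm{v},\,F}, \qquad F \ceq -\bm{v}\cdot \nabla g[\bm{u},f],
\]
so that, after subtracting the definition of $\partial_{\bm{u}}g[\bm{u},f]\bm{v} = g[\bm{u},F]$, the remainder is precisely $g[\bm{u}+\bm{v},F] - g[\bm{u},F]$. Applying the Lipschitz-in-$\bm{u}$ estimate \eqref{eq:g_lipschitz_u} at regularity $s-1$—which requires $s > 3$—gives a bound of order $\norm{\bm{v}}_{s-2}\norm{F}_{s-1}$. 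Since $\norm{F}_{s-1} \lesssim \norm{\bm{v}}_{s-1}\norm{g[\bm{u},f]}_s \lesssim \norm{\bm{v}}_{s-1}\norm{f}_s$, this closes the estimate.

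For the final Lipschitz continuity of $\partial_{\bm{u}}g$ in its $\bm{u}$ argument, I would split the difference by adding and subtracting $g\brak{\bm{u}_1,\,\bm{v}\cdot \nabla g[\bm{u}_2,f]}$, obtaining
\[
    \partial_{\bm{u}}g[\bm{u}_1,f]\bm{v} - \partial_{\bm{u}}g[\bm{u}_2,f]\bm{v} = g\brak[\big]{\bm{u}_1,\,\bm{v}\cdot \nabla(g[\bm{u}_2,f]-g[\bm{u}_1,f])} - \parn[\big]{g[\bm{u}_1,G]-g[\bm{u}_2,G]}
\]
with $G \ceq \bm{v}\cdot \nabla g[\bm{u}_2,f]$. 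The first piece is controlled by feeding the bound $\norm{g[\bm{u}_1,f]-g[\bm{u}_2,f]}_{s-1} \lesssim \norm{\bm{u}_1-\bm{u}_2}_{s-1}\norm{f}_s$ from \eqref{eq:g_lipschitz_u} into the transport estimate at regularity $s-2$, while the second is estimated directly by \eqref{eq:g_lipschitz_u} at regularity $s-1$ applied to the outer difference, using $\norm{G}_{s-1} \lesssim \norm{\bm{v}}_{s-1}\norm{f}_s$.

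The main obstacle is book-keeping the loss of regularity: the Lipschitz estimate \eqref{eq:g_lipschitz_u} costs one derivative each time it is applied, so whenever it appears the target regularity must be one below the input. The assumption $s > 3$ is precisely what is needed to accommodate two such losses while still remaining above the thresholds $s-1 > 2$ and $s-2 > 1$ required by \eqref{eq:g_lipschitz_u} and by \eqref{eq:transport_bound}, respectively; the rest of the proof is mechanical algebra.
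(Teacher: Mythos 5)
Your proof is correct and follows essentially the same route as the paper: both rest on the identities \eqref{eq:g_f_difference}--\eqref{eq:g_u_difference}, the transport bound \eqref{eq:transport_bound}, and \Cref{prop:g_lipschitz_continuity}, and your decomposition for the Lipschitz continuity of $\partial_{\bm{u}}g$ is exactly the paper's. The only (immaterial) variation is in \eqref{eq:g_differentiability_u}, where the paper writes the remainder as $-g\brak[\big]{\bm{u},\bm{v}\cdot\nabla\parn[\big]{\Delta_{(\bm{v},0)}g[\bm{u},f]}}$ and estimates the outer solve via \eqref{eq:transport_bound}, whereas you keep it as $g[\bm{u}+\bm{v},F]-g[\bm{u},F]$ and apply \eqref{eq:g_lipschitz_u} one regularity level down; both yield the same bound under the same hypothesis $s>3$.
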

\begin{proof}
    Use of~\eqref{eq:g_f_difference} and \eqref{eq:g_u_difference} yields
    \[
        \Delta_{(\bm{v},0)}g[\bm{u},f]- \partial_{\bm{u}}g[\bm{u},f]\bm{v} = -g\brak[\big]{\bm{u},\bm{v}\cdot\nabla\parn[\big]{\Delta_{(\bm{v},0)}g[\bm{u},f]}},
    \]
    whence
    \begin{align*}
        \norm[\big]{\Delta_{(\bm{v},0)}g[\bm{u},f] - \partial_{\bm{u}}g[\bm{u},f]\bm{v}}_{s-2} & \lesssim_{\norm{\bm{u}}_{s-2},\delta} \norm{\bm{v}}_{s-2} \norm{\Delta_{(\bm{v},0)}g[\bm{u},f]}_{s-1} \\
                                                                                               & \lesssim_{\norm{\bm{u}}_{s},\norm{\bm{v}}_s, \delta} \norm{\bm{v}}_{s-1}^2 \norm{f}_s
    \end{align*}
    by \Cref{prop:g_lipschitz_continuity}. That is,~\eqref{eq:g_differentiability_u} holds.

    For the Lipschitz continuity, we note that
    \[
        \parn[\big]{\partial_{\bm{u}}g[\bm{u}_1,f]-\partial_{\bm{u}}g[\bm{u}_2,f]}\bm{v}=
        \begin{multlined}[t]
            g\brak[\big]{\bm{u}_2,\bm{v} \cdot \nabla g[\bm{u}_2,f]} - g\brak[\big]{\bm{u}_1,\bm{v} \cdot \nabla g[\bm{u}_2,f]}\\
            +g\brak[\big]{\bm{u}_1,\bm{v}\cdot \nabla \parn[\big]{g[\bm{u}_2,f]-g[\bm{u}_1,f]}},
        \end{multlined}
    \]
    from which the result is obtained by using the same proposition again.
\end{proof}

A straightforward induction argument from~\eqref{eq:g_derivative_u} shows that the formal $k$th derivative of $g$ with respect to $\bm{u}$ can be introduced recursively through
\begin{equation}
    \label{eq:g_kth_derivative_u}
    \partial_{\bm{u}}^k g[\bm{u},f](\bm{v}_i)_{i=1}^k \ceq - g\brak[\bigg]{\bm{u},\sum_{j=1}^k \bm{v}_j \cdot\nabla\parn[\Big]{\partial_{\bm{u}}^{k-1} g[\bm{u},f](\bm{v}_i)_{\substack{i =1 \\i \neq j}}^k}},
\end{equation}
which is well-defined when $s>k+1$, $\bm{u} \in \mc{U}_\delta^s$, $f \in C_\loc^s(\ol{\Omega^0})$, and $\bm{v}_1,\ldots, \bm{v}_k \in \mo{\mb{X}}_+^{s-1}$.
We furthermore note that
\[
    \partial_f\partial_{\bm{u}}^k g[\bm{u},f]h \ceq \partial_{\bm{u}}^k g[\bm{u}, h]
\]
by linearity, while
\[
    \partial_f^2\partial_{\bm{u}}^k g[\bm{u},f]=0.
\]

We have the following higher-order analogue of \Cref{prop:g_first_derivative_u}.

\begin{proposition}[Higher derivatives]
    \label{prop:g_second_derivative_u}
    We have
    \[
        \norm{\partial_{\bm{u}}^k g[\bm{u},f]}_{\mc{L}^k(\mo{\mb{X}}_+^{s-1},C_\loc^{s-k}(\ol{\Omega^0}))} \lesssim_{\norm{\bm{u}}_s,\delta} \norm{f}_s
    \]
    for the operator defined in~\eqref{eq:g_kth_derivative_u}, and if $s>k+2$, then
    \[
        \norm{\Delta_{(\bm{v},0)}\partial_{\bm{u}}^{k-1} g[\bm{u},f]-\partial_{\bm{u}}^k g[\bm{u},f]\bm{v}}_{\mc{L}^{k-1}(\mo{\mb{X}}_+^{s-1},C_\loc^{s-(k+1)}(\ol{\Omega^0}))}\lesssim_{\norm{\bm{u}}_s,\norm{\bm{v}}_s,\delta} \norm{f}_s \norm{\bm{v}}_{s-1}^{2}
    \]
    for all $\bm{u},\bm{u}+\bm{v} \in \mc{U}_\delta^s$ and $f \in C_\loc^s(\ol{\Omega^0})$. Moreover, when $s>k+2$ we have
    \[
        \norm{\partial_{\bm{u}}^k g[\bm{u}_1,f]-\partial_{\bm{u}}^k g[\bm{u}_2,f]}_{\mc{L}^k(\mo{\mb{X}}_+^{s-1},C_\loc^{s-(k+1)}(\ol{\Omega^0}))}\lesssim_{\norm{\bm{u}_1}_{s-1},\norm{\bm{u}_2}_s,\delta}\norm{f}_s \norm{\bm{u}_1-\bm{u}_2}_{s-1}
    \]
    when $\bm{u}_1 \in \mc{U}_\delta^{s-1}$, $\bm{u}_2 \in \mc{U}_\delta^s$ and $f \in C_\loc^s(\ol{\Omega^0})$.
\end{proposition}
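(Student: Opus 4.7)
The proof proceeds by induction on $k \geq 1$, with base case $k=1$ supplied by \Cref{prop:g_first_derivative_u}. Throughout, the tacit requirement $s > k+1$ in the boundedness claim (and $s > k+2$ in the differentiability and Lipschitz claims) ensures that each application of the transport bound \eqref{eq:transport_bound} produces output in the target $C^{s-k}_\loc$ (respectively $C^{s-(k+1)}_\loc$) space.

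For the inductive step, boundedness follows directly from the recursion \eqref{eq:g_kth_derivative_u}: applying the transport estimate \eqref{eq:transport_bound} to the outer $g[\bm{u},\cdot]$ reduces matters to bounding $\sum_{j=1}^k \bm{v}_j \cdot \nabla \partial_{\bm{u}}^{k-1}g[\bm{u},f](\bm{v}_i)_{i \neq j}$ in $C^{s-k}_\loc$. The inductive boundedness places each $\partial_{\bm{u}}^{k-1}g[\bm{u},f](\bm{v}_i)_{i \neq j}$ in $C^{s-(k-1)}_\loc$ with norm $\lesssim \|f\|_s \prod_{i \neq j}\|\bm{v}_i\|_{s-1}$, the gradient costs exactly one derivative, and multiplication by $\bm{v}_j \in C^{s-1} \hookrightarrow C^{s-k}$ is continuous, closing the estimate.

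For differentiability, abbreviate $T_{k-1}[\bm{u}](\bm{v}_i)_{i<k} \ceq \sum_{j<k} \bm{v}_j \cdot \nabla \partial_{\bm{u}}^{k-2}g[\bm{u},f](\bm{v}_i)_{i<k,\, i \neq j}$, so that $\partial_{\bm{u}}^{k-1}g[\bm{u},f] = -g[\bm{u},T_{k-1}[\bm{u}]]$. Inserting and subtracting $g[\bm{u}+\bm{v}, T_{k-1}[\bm{u}]]$ and invoking \eqref{eq:g_u_difference} for the resulting $\bm{u}$-increment yields
\[
    \Delta_{(\bm{v},0)}\partial_{\bm{u}}^{k-1}g[\bm{u},f] = g\bigl[\bm{u}+\bm{v},\, T_{k-1}[\bm{u}] - T_{k-1}[\bm{u}+\bm{v}]\bigr] + g\bigl[\bm{u}+\bm{v},\, \bm{v} \cdot \nabla g[\bm{u}, T_{k-1}[\bm{u}]]\bigr].
\]
The second summand matches the $j=k$ contribution to $\partial_{\bm{u}}^k g[\bm{u},f]\bm{v}$ up to replacing $g[\bm{u}+\bm{v},\cdot]$ by $g[\bm{u},\cdot]$; \Cref{prop:g_lipschitz_continuity} (specifically \eqref{eq:g_lipschitz_u}) controls this swap by $\|\bm{v}\|_{s-1}^2$ times the expected multilinear factors. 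For the first summand, the inductive differentiability estimate applied to each $\partial_{\bm{u}}^{k-2}g$ inside $T_{k-1}[\bm{u}+\bm{v}] - T_{k-1}[\bm{u}]$ produces the remaining $j<k$ summands of $\partial_{\bm{u}}^k g[\bm{u},f]\bm{v}$, modulo a quadratic remainder.

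The Lipschitz claim is proved by an analogous decomposition, with the inductive differentiability replaced by the inductive Lipschitz bound and \Cref{prop:g_lipschitz_continuity} absorbing the outer $g$. The main obstacle throughout is bookkeeping: each recourse to \eqref{eq:g_u_difference} introduces a gradient that costs one derivative, and one must confirm that the budget $s \to s-k$ in the boundedness claim (respectively $s \to s-(k+1)$ in the differentiability and Lipschitz claims) exactly absorbs both the $k$ nested applications of the recursion and the single extra gradient produced by the finite difference.
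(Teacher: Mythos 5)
Your proof is correct and follows essentially the same route as the paper's: induction on $k$ through the recursion \eqref{eq:g_kth_derivative_u}, telescoping with \eqref{eq:g_f_difference} and \eqref{eq:g_u_difference}, and closing the estimates with the first-order results of \Cref{prop:g_lipschitz_continuity,prop:g_first_derivative_u}. The only cosmetic difference is that the paper's exact recursive identity keeps the outer $g[\bm{u},\wildcard]$ at the base point, so the extra term $\bm{v}\cdot\nabla\parn[\big]{\Delta_{(\bm{v},0)}\partial_{\bm{u}}^{k-1}g[\bm{u},f]}$ is absorbed directly by the Lipschitz bound, whereas your decomposition evaluates the outer solution operator at $\bm{u}+\bm{v}$ and controls the swap back to $\bm{u}$ via \eqref{eq:g_lipschitz_u}; both give the same quadratic remainder.
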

\begin{proof}
    The results follow by using~\eqref{eq:g_kth_derivative_u}, together with the identities
    \begin{multline*}
        \parn[\big]{\Delta_{(\bm{v},0)}\partial_{\bm{u}}^{k-1} g[\bm{u},f]-\partial_{\bm{u}}^k g[\bm{u},f]\bm{v}}(\bm{v}_i)_{i=1}^{k-1}\\
        =-g\brak[\bigg]{\bm{u},\sum_{j=1}^{k-1} \bm{v}_j \cdot \nabla\parn[\Big]{\parn[\big]{\Delta_{(\bm{v},0)}\partial_{\bm{u}}^{k-2} g[\bm{u}, f] -\partial_{\bm{u}}^{k-1} g[\bm{u},f]\bm{v}}(\bm{v}_i)_{\substack{i=1\\i \neq j}}^{k-1}}\\
            +\bm{v} \cdot \nabla \parn[\big]{\Delta_{(\bm{v},0)}\partial_{\bm{u}}^{k-1} g[\bm{u},f](\bm{v}_i)_{i=1}^{k-1}}},
    \end{multline*}
    and
    \begin{multline*}
        \parn[\big]{\partial_{\bm{u}}^k[\bm{u}_1,f]-\partial_{\bm{u}}^kg[\bm{u}_2,f]}(\bm{v}_i)_{i=1}^k\\
        = - g\brak[\bigg]{\bm{u}_1,\sum_{j=1}^k \bm{v}_j \cdot \nabla \parn[\Big]{\parn[\big]{\partial_{\bm{u}}^{k-1}g[\bm{u}_1,f]- \partial_{\bm{u}}^{k-1}g[\bm{u}_2,f]}(\bm{v}_i)_{\substack{i=1\\i\neq j}}^k}\\
        +(\bm{u}_1-\bm{u}_2)\cdot \nabla \parn[\big]{\partial_{\bm{u}}^k g[\bm{u}_2,f](\bm{v}_i)_{i=1}^k}}
    \end{multline*}
    in an induction argument.
\end{proof}

Using the maps $g$ and $G$ provided by \Cref{prop:transport_equation_existence_uniqueness}, we may now define
\begin{align*}
    \tau[\bm{u},\eta] & \ceq g[\bm{u},\rho]
    \shortintertext{and}
    q[\bm{u},\eta]    & \ceq G[\bm{u},\rho]
\end{align*}
for $\bm{u} \in \mc{U}_\delta^s$ and $\eta \in \mc{V}_\epsilon^{s+1}$, recalling~\eqref{eq:flattened_time} and \eqref{eq:rho_definition}. These are functions in $C^s_\loc\parn[\big]{\ol{\Omega^0}}$, enjoying extra properties described in \Cref{prop:transport_equation_existence_uniqueness}. In particular, we have that $\nabla q[\bm{u},\eta]\times \nabla \tau[\bm{u},\eta]\in \mb{X}^{s-1}_-$. Moreover, the next lemma explicitly describes the formal partial derivatives of $\tau$ at the trivial solutions, which we recall were given in~\eqref{eq:trivial_solutions}.

\begin{lemma}
    We have
    \begin{equation}
        \label{eq:tau_derivatives}
        \begin{aligned}
            \partial_\bm{u}\tau[c\bm{e}_1,0]\dot{\bm{u}} & = -\frac{1}{c^2}\int_0^{x_1}\dot{u}_1(t,x_2,x_3)\,dt \\
            \partial_\eta\tau[c\bm{e}_1,0]\dot{\eta}     & = \frac{1}{cd}\int_0^{x_1}\dot{\eta}(t,x_2)\,dt
        \end{aligned}
    \end{equation}
    for every $c>0$. A particular consequence is that
    \begin{equation}
        \label{eq:q_derivative_identity}
        c\partial_\bm{u} q[c\bm{e}_1,0]D\bmf{u}[0] + \partial_\eta q[c\bm{e}_1,0] =0
    \end{equation}
    for all $c>0$.
\end{lemma}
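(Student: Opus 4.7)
The plan is to simply unwind the definitions using the explicit formulas \eqref{eq:g_derivative_f} and \eqref{eq:g_derivative_u} for the partial derivatives of $g$, together with the observation that at the trivial configuration $\bm{u} = c\bm{e}_1$, $\eta = 0$ the transport equation admits an explicit solution.

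Indeed, for $\bm{u} = c\bm{e}_1$ with $c > 0$, the transport equation $\bm{u} \cdot \nabla v = f$ with $v|_{x_1 = 0} = 0$ is solved by integration in $x_1$, so
\[
    g[c\bm{e}_1, f] = \frac{1}{c}\int_0^{x_1} f(t, x_2, x_3)\,dt.
\]
In particular, $\tau[c\bm{e}_1, 0] = g[c\bm{e}_1, 1] = x_1/c$, which is consistent with \eqref{eq:trivial_time_functions}. For the derivative in $\bm{u}$, formula \eqref{eq:g_derivative_u} and the identity above give
\[
    \partial_{\bm{u}}\tau[c\bm{e}_1, 0]\dot{\bm{u}} = -g\brak[\big]{c\bm{e}_1, \dot{\bm{u}} \cdot \nabla (x_1/c)} = -g[c\bm{e}_1, \dot{u}_1/c] = -\frac{1}{c^2}\int_0^{x_1}\dot{u}_1(t, x_2, x_3)\,dt.
\]
For the derivative in $\eta$, since $\rho = 1 + \eta/d$ depends linearly on $\eta$, the chain rule together with \eqref{eq:g_derivative_f} yields
\[
    \partial_\eta\tau[c\bm{e}_1, 0]\dot{\eta} = g[c\bm{e}_1, \dot{\eta}/d] = \frac{1}{cd}\int_0^{x_1}\dot{\eta}(t, x_2)\,dt,
\]
giving \eqref{eq:tau_derivatives}.

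For the consequence, recall that $q = \tau(\wildcard + \lambda_1 \bm{e}_1) - \tau$, so
\begin{align*}
    \partial_{\bm{u}} q[c\bm{e}_1, 0]\dot{\bm{u}} & = -\frac{1}{c^2}\int_{x_1}^{x_1 + \lambda_1}\dot{u}_1(t, x_2, x_3)\,dt, \\
    \partial_{\eta} q[c\bm{e}_1, 0]\dot{\eta}     & = \frac{1}{cd}\int_{x_1}^{x_1 + \lambda_1}\dot{\eta}(t, x_2)\,dt.
\end{align*}
Next, by \Cref{prop:bmf_derivative}, the first component of $D\bmf{u}[0]\dot{\eta}$ equals $\dot{\eta}/d + \partial_1^2 \chi[\dot{\eta}]$, and since $\chi[\dot{\eta}]$ is $\Lambda$-periodic in $\bm{x}'$ its derivative $\partial_1\chi[\dot{\eta}]$ is $\lambda_1$-periodic in $x_1$, so $\int_{x_1}^{x_1 + \lambda_1}\partial_1^2\chi[\dot{\eta}](t, x_2, x_3)\,dt = 0$. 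Substituting $D\bmf{u}[0]\dot{\eta}$ into the formula for $\partial_{\bm{u}} q[c\bm{e}_1, 0]$ therefore produces
\[
    c\,\partial_{\bm{u}} q[c\bm{e}_1, 0]D\bmf{u}[0]\dot{\eta} = -\frac{1}{cd}\int_{x_1}^{x_1 + \lambda_1}\dot{\eta}(t, x_2)\,dt = -\partial_{\eta} q[c\bm{e}_1, 0]\dot{\eta},
\]
which is \eqref{eq:q_derivative_identity}. No step presents a genuine obstacle; the only subtlety is keeping the distinction between $\rho$ (a function on $\Omega^0$) and $\eta$ (a function on $\R^2$) straight in the chain rule.
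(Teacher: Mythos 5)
Your proposal is correct and follows essentially the same route as the paper: both arguments rest on the explicit integration formula for $g[c\bm{e}_1,\wildcard]$ (the flow $\bm{\Phi}$ being the identity at the trivial solution), the formulas \eqref{eq:g_derivative_f} and \eqref{eq:g_derivative_u}, and then \Cref{prop:bmf_derivative} together with the $\lambda_1$-periodicity of $\partial_1\chi[\dot{\eta}]$ to eliminate the $\partial_1^2\chi$ contribution in \eqref{eq:q_derivative_identity}. Your write-up merely spells out the intermediate steps that the paper leaves implicit.
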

\begin{proof}
    The derivatives given in~\eqref{eq:tau_derivatives} follow directly from~\eqref{eq:g_derivative_f}, \eqref{eq:g_derivative_u} and \eqref{eq:transport_solution}, since $\bm{\Phi}_{t,\mo{t}}= \id_{\R \times (-d,0)}$
    for all $t,\mo{t}$ at the trivial solutions. For~\eqref{eq:q_derivative_identity}, we simply compute that
    \[
        c\partial_\bm{u} q[c\bm{e}_1,0]D\bmf{u}[0]\dot{\eta} = - \frac{1}{c}\int_0^{\lambda_1} \parn[\Big]{\frac{1}{d}\dot{\eta} + \partial_1^2 \chi[\dot{\eta}]}(t,x_2,x_3)\,dt = -\frac{1}{cd}\int_0^{\lambda_1} \dot{\eta}(t,x_2)\,dt
    \]
    by~\eqref{eq:tau_derivatives} and \Cref{prop:bmf_derivative}.
\end{proof}

\subsection{Fixed point argument}
\label{sec:contraction}
Choose $\epsilon$ small enough for the last part of \Cref{prop:div_curl_problem} to hold, and such that
\[
    \mf{u}_1[\eta] >\frac{1}{2} \quad \text{for all $\eta \in \mc{V}_\epsilon^{s+1}$},
\]
which is possible since $\bmf{u}[0] = \bm{e}_1$. Then, in particular,
\[
    (c \bmf{u}[\eta]+\bms{u})_1 > \frac{4\delta}{2}-\delta= \delta
\]
for every $c > 4\delta$, $\eta \in \mc{V}_\epsilon^{s+1}$ and
\[
    \bms{u} \in \mc{B}_\delta^s \ceq \ol{B}_\delta(0;\mo{\mb{X}}_+^s),
\]
ensuring that
\[
    c\bmf{u}[\eta]+\bms{u} \in \mc{U}_\delta^s
\]
for all such $\bms{u}$, $\eta$, and $c$.

If, in addition,
\[
    c \in \mc{I} \ceq \parn[\big]{4\delta,4\delta + \epsilon^{-1}},
\]
we also have that $\norm{c\bmf{u}[\eta]+\bms{u}}_s \lesssim 1$, with the constant depending on $\epsilon$ and $\delta$. Using this, we can define a map $\map{T}{\mc{B}_\delta^s \times \mc{V}_\epsilon^{s+1} \times \mc{I}}{\mo{\mb{X}}_+^s}$ through
\[
    T[\bms{u},\eta,c] \ceq S[\eta]\parn[\Big]{\nabla \parn[\big]{h\parn[\big]{q\brak[\big]{c\bmf{u}[\eta]+\bms{u},\eta}}}\times \nabla \tau\brak[\big]{c\bmf{u}[\eta]+\bms{u},\eta}},
\]
for which we will find a fixed-point $\bms{u}[\eta,c]$ when
\[
    h \in \mc{H}_\beta^s \ceq \ol{B}_\beta(0;C^s(\R))
\]
for sufficiently small $\beta > 0$.

The differentiability properties of $T$ are given in the next lemma, where it is tacitly understood that any bound $\lesssim$ involves a potential shrinkage of $\epsilon > 0$ for the constant to be finite. For the remainder of the article, we fix a non-integer $\mf{s}>4$, and think of $h$ as fixed.

\begin{lemma}
    \label{lemma:T_deriv}
    There exist formal derivatives
    \[
        \map{D^k T}{\mc{B}^\mf{s}_\delta \times \mc{V}^{\mf{s}+1}_\epsilon \times \mc{I}}{\mc{L}^k\parn[\big]{\mo{\mb{X}}_+^{\mf{s}-1} \times \mb{Y}^\mf{s}_+\times \R,\mo{\mb{X}}_+^{\mf{s}-k}}}
    \]
    for $1 \leq k < \mf{s}-2$ such that, for $k \geq 0$,
    \begin{align*}
        \norm{D^k T[\bm{\gamma}]}_{\mc{L}^k\parn[\big]{\mo{\mb{X}}_+^{s-1}\times \mb{Y}^s_+\times \R, \mo{\mb{X}}_+^{s-k}}}                                                                        & \lesssim \beta                                                                          &  & k + 1 < s \leq \mf{s}, \\
        \norm{\Delta_{\dot{\bm{\gamma}}}D^kT[\bm{\gamma}]}_{\mc{L}^k\parn[\big]{\mo{\mb{X}}_+^{s-1}\times \mb{Y}^s_+\times \R, \mo{\mb{X}}_+^{s-(k+1)}}}                                           & \lesssim \beta \norm{\dot{\bm{\gamma}}}_{\mo{\mb{X}}_+^{s-1}\times \mb{Y}^s_+\times \R} &  & k + 2 < s \leq \mf{s},
        \shortintertext{and}
        \norm{\Delta_{\dot{\bm{\gamma}}}D^k T[\bm{\gamma}]-D^{k+1}T[\bm{\gamma}]\dot{\bm{\gamma}}}_{\mc{L}^{k}\parn[\big]{\mo{\mb{X}}_+^{s-1}\times \mb{Y}^s_+\times \R, \mo{\mb{X}}_+^{s-(k+2)}}} & = o(\norm{\dot{\bm{\gamma}}}_{\mo{\mb{X}}_+^{s-1}\times \mb{Y}^s_+\times \R})           &  & k + 3 < s \leq \mf{s}
    \end{align*}
    for all $\bm{\gamma}, \bm{\gamma} + \dot{\bm{\gamma}} \in \mc{B}^\mf{s}_\delta \times \mc{V}^{\mf{s}+1}_\epsilon \times \mc{I}$. Moreover,
    \begin{equation}
        \label{eq:T_derivatives}
        \begin{aligned}
            \partial_\bms{u} T[0,0,c]\dot{\bms{u}} & = \frac{\bm{e}_1}{c}h'\parn[\Big]{\frac{\lambda_1}{c}}\parn[\Big]{\partial_\bm{u}q[c\bm{e}_1,0]\dot{\bms{u}} + \frac{\lambda_1}{c^2}\fint_{\Omega_0^0} \dot{\ms{u}}_1\,d\bm{x}}, \\
            \partial_\eta T[0,0,c]                 & = 0
        \end{aligned}
    \end{equation}
    for every $c \in \mc{I}$.
\end{lemma}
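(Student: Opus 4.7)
The plan is to expand $T[\bms{u},\eta,c] = S[\eta]\bigl(h'(q)\,\nabla q\times \nabla\tau\bigr)$, where $\tau = g[c\bmf{u}[\eta]+\bms{u},\rho]$, $q = G[c\bmf{u}[\eta]+\bms{u},\rho]$ and $\rho = 1+\eta/d$, and to compute all formal derivatives via the chain rule. The constituent maps $S[\eta]$ and $\bmf{u}[\eta]$ are analytic between the relevant Hölder spaces by \Cref{lemma:S^su_and_derivatives_lipschitz}, the inclusion $(\bms{u},\eta,c)\mapsto c\bmf{u}[\eta]+\bms{u}$ is polynomial, and the map $\eta\mapsto \rho$, the composition with $h^{(j)}$, the gradient, the wedge and the multiplication by $h^{(j)}(q)$ are all handled by standard Hölder multiplication and composition estimates. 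Only $g$ and $G$ lose regularity under differentiation, and by \Cref{prop:g_second_derivative_u} the loss is exactly one Hölder derivative per $\bm{u}$-differentiation. Since $S[\eta]$ recovers one derivative, the resulting formal $k$-th derivative lies in the space $\mc{L}^k(\cdots;\mo{\mb{X}}_+^{s-k})$ stated in the lemma; the three thresholds $k+1<s$, $k+2<s$, $k+3<s$ for the bound, the Lipschitz estimate, and the true differentiability mirror the corresponding thresholds in \Cref{prop:g_second_derivative_u}.

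The uniform bound $\norm{D^k T[\bm{\gamma}]}\lesssim\beta$ holds because every term in the chain-rule expansion of $h'(q)\,\nabla q\times\nabla\tau$ and its higher derivatives carries at least one factor $h^{(j)}(q)$ with $j\ge 1$, bounded in $C^{s-j}$ by $\norm{h}_s\le\beta$ via a composition estimate and the uniform bound on $q$ over $\mc{B}_\delta^\mf{s}\times\mc{V}_\epsilon^{\mf{s}+1}\times\mc{I}$; the remaining factors are bounded uniformly in $\bm{\gamma}$ by \Cref{lemma:S^su_and_derivatives_lipschitz,prop:g_second_derivative_u}. The Lipschitz bound for $\Delta_{\dot{\bm{\gamma}}} D^k T[\bm{\gamma}]$ follows by telescoping, each step controlled by the Lipschitz parts of those two statements at the cost of one further derivative, and the differentiability estimate $\Delta_{\dot{\bm{\gamma}}} D^k T[\bm{\gamma}] - D^{k+1}T[\bm{\gamma}]\dot{\bm{\gamma}} = o(\norm{\dot{\bm{\gamma}}})$ combines the $o(\norm{\bm{v}})$-part of \Cref{prop:g_second_derivative_u} with Taylor expansion of $h^{(j)}$, at the cost of yet one more derivative. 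The main obstacle is the careful bookkeeping of how derivatives distribute across the many factors in the chain rule so that each term lands in the correct Hölder space; I expect the argument to proceed by induction on $k$, mirroring the recursion \eqref{eq:g_kth_derivative_u}.

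The explicit formulas at the trivial solution follow by direct computation using $c\bmf{u}[0]+\bms{u}=c\bm{e}_1$, $\rho=1$, $\nabla\tau=\bm{e}_1/c$, and $\nabla q=0$ at $(\bms{u},\eta)=(0,0)$: in any first partial derivative of $h'(q)\,\nabla q\times\nabla\tau$ only the term proportional to $\nabla(\partial_{\wildcard} q)\times\nabla\tau$ survives. For the $\eta$-direction, $\partial_\eta q[0,0,c]\dot\eta = c\,\partial_{\bm u}q[c\bm e_1,0]D\bmf{u}[0]\dot\eta + \partial_\eta q[c\bm e_1,0]\dot\eta$ vanishes by \eqref{eq:q_derivative_identity}, whence $\partial_\eta T[0,0,c]=0$. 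For the $\bms{u}$-direction, $w\ceq \partial_{\bm u}q[c\bm e_1,0]\dot{\bms u}$ is independent of $x_1$ by \eqref{eq:tau_derivatives} and $\lambda_1$-periodicity, so the field $(h'(\lambda_1/c)/c)\,w\bm e_1$ is divergence-free, has curl $(h'(\lambda_1/c)/c)\nabla w\times\bm{e}_1$ and vanishing third component on $\partial\Omega^0$; $S[0]$ therefore only needs to subtract its $x_1$-mean, and the elementary identity $\fint_{\Omega_0^0}w\,d\bm{x} = -(\lambda_1/c^2)\fint_{\Omega_0^0}\dot{\ms u}_1\,d\bm{x}$ then produces \eqref{eq:T_derivatives}.
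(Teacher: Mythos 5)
Your proposal is correct and follows essentially the same route as the paper: the regularity claims come from viewing $T$ as a composition of $S$, $\bmf{u}$, $g$, $G$ and smooth algebraic operations (the paper simply delegates the chain-rule bookkeeping you describe to the abstract \Cref{Cor:composition} and \Cref{Rem:composition}), and the factor $\beta$ comes from $\norm{h}_{C^s}\leq\beta$. Your computations of $\partial_{\bms{u}}T[0,0,c]$ (via $\nabla w\times\bm{e}_1=\nabla\times(w\bm{e}_1)$ with $w$ independent of $x_1$, so $S[0]$ just subtracts the mean) and of $\partial_\eta T[0,0,c]=0$ (via \eqref{eq:q_derivative_identity}) match the paper's argument.
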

\begin{proof}
    $T$ is a composition of the maps given above the lemma. Repeated use of \Cref{Cor:composition} gives the regularity (see also \Cref{Rem:composition}). For~\eqref{eq:T_derivatives}, start by observing that
    \begin{align*}
        \partial_\bms{u} T[0,0,c]\dot{\bms{u}} & = S[0]\parn[\bigg]{\nabla\parn[\Big]{h'\parn[\Big]{\frac{\lambda_1}{c}}\partial_\bm{u}q[c\bm{e}_1,0]\dot{\bms{u}}} \times \frac{\bm{e}_1}{c}}  \\
                                               & = \frac{1}{c}h'\parn[\Big]{\frac{\lambda_1}{c}}S[0]\parn[\Big]{\nabla \times \parn[\big]{\bm{e}_1\partial_\bm{u}q[c\bm{e}_1,0]\dot{\bms{u}}}},
    \end{align*}
    which yields the first derivative after using~\eqref{eq:tau_derivatives}. Similarly,
    \[
        \partial_\eta T[0,0,c]\dot{\eta} = \frac{1}{c}h'\parn[\Big]{\frac{\lambda_1}{c}}S[0]\parn[\Big]{\nabla \times \parn[\big]{\bm{e}_1\parn{\partial_\eta q[c\bm{e}_1,0] + c\partial_\bm{u}q[c\bm{e}_1,0]D\bmf{u}[0]}\dot{\eta}}},
    \]
    which vanishes identically by~\eqref{eq:q_derivative_identity}.
\end{proof}

By \Cref{lemma:T_deriv}, we may choose $\beta$ small enough for
\[
    \norm{\Delta_{\dot{\bm{\gamma}}} T[\bm{\gamma}]}_{\mo{\mb{X}}_+^{\mf{s}-1}} \leq \frac{1}{2}\norm{\dot{\bm{\gamma}}}_{\mo{\mb{X}}_+^{\mf{s}-1}\times \mb{Y}^\mf{s}_+\times \R}
\]
to hold for all $\bm{\gamma}, \bm{\gamma} + \dot{\bm{\gamma}} \in \mc{B}^\mf{s}_\delta \times \mc{V}^{\mf{s}+1}_\epsilon \times \mc{I}$. In particular, $T[\wildcard,\eta,c]$ has a unique fixed-point
\[
    \bms{u}[\eta,c] \in \mc{B}_\delta^\mf{s}
\]
for every $(\eta,c) \in \mc{V}_\epsilon^{\mf{s}+1} \times \mc{I}$, furnished by Banach's fixed-point theorem. This is because the ball $\mc{B}_\delta^\mf{s}$ is closed with respect to $\norm{\wildcard}_{\mf{s}-1}$.

The next goal is to establish regularity properties of this fixed point, in order to facilitate the eventual Lyapunov--Schmidt reduction. We expect the loss of derivatives with respect to $\eta$ and $c$ to be inherited from the map $T$, so this regularity must necessarily be measured with respect to weaker norms.

\begin{proposition}
    \label{prop:u'}
    The fixed point has formal derivatives
    \[
        \map{D^k\bms{u}}{\mc{V}^{\mf{s}+1}_\epsilon\times \mc{I}}{\mc{L}^k\parn[\big]{\mb{Y}^\mf{s}_+\times \R,\mo{\mb{X}}_+^{\mf{s}-k}}}
    \]
    for $1 \leq k < \mf{s} -2$ such that, for $k \geq 0$,
    \begin{align*}
        \norm{D^k \bms{u}[\eta,c]}_{\mc{L}^k\parn[\big]{\mb{Y}^{s}_+\times \R,\mo{\mb{X}}_+^{s-k}}}                                                                           & \lesssim \beta                                                  &  & k+1 < s \leq \mf{s}, \\
        \norm{\Delta_{(\dot{\eta},\dot{c})} D^k \bms{u}[\eta,c]}_{\mc{L}^k\parn[\big]{\mb{Y}^s_+\times \R,\mo{\mb{X}}_+^{s-(k+1)}}}                                           & \lesssim \beta \parn[\big]{\norm{\dot{\eta}}_s + \abs{\dot{c}}} &  & k+2 < s \leq \mf{s}, \\
        \shortintertext{and}
        \norm{\Delta_{(\dot{\eta},\dot{c})}D^k \bms{u}[\eta,c]-D^{k+1}\bms{u}[\eta,c](\dot{\eta},\dot{c})}_{\mc{L}^k\parn[\big]{\mb{Y}^s_+\times \R,\mo{\mb{X}}_+^{s-(k+2)}}} & =o\parn[\big]{\norm{\dot{\eta}}_s + \abs{\dot{c}}}              &  & k+3 < s \leq \mf{s}
    \end{align*}
    for all $(\eta,c),(\eta,c)+(\dot{\eta},\dot{c}) \in \mc{V}_\epsilon^{\mf{s}+1} \times \mc{I}$.
\end{proposition}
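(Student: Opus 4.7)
The plan is to derive \Cref{prop:u'} from the parametric fixed-point theorem with loss of derivatives established in \Cref{Appendix:fixed_points}, whose hypotheses match precisely the tripartite structure of estimates in \Cref{lemma:T_deriv}. The existence of the fixed point $\bms{u}[\eta,c]$ in $\mc{B}_\delta^\mf{s}$ has already been obtained, via contraction of $T[\wildcard,\eta,c]$ in $\norm{\wildcard}_{\mf{s}-1}$; what remains is to transfer the scale-of-Banach-spaces differentiability of $T$ into analogous estimates for $\bms{u}$ with the same loss pattern.

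To exhibit the mechanism, I would begin by formally differentiating the fixed-point identity $\bms{u}[\eta,c]=T[\bms{u}[\eta,c],\eta,c]$ with respect to $(\eta,c)$, which yields
\begin{equation*}
    \parn[\big]{I-\partial_\bms{u} T[\bms{u}[\eta,c],\eta,c]}D\bms{u}[\eta,c](\dot{\eta},\dot{c}) = \partial_\eta T[\bms{u}[\eta,c],\eta,c]\dot{\eta}+\partial_c T[\bms{u}[\eta,c],\eta,c]\dot{c}.
\end{equation*}
Since $\beta$ was chosen so that $\norm{\Delta_{\dot{\bm{\gamma}}} T[\bm{\gamma}]}_{\mf{s}-1}\leq \tfrac{1}{2}\norm{\dot{\bm{\gamma}}}$, and in particular $\norm{\partial_\bms{u} T}\leq \tfrac{1}{2}$ as a bounded operator on $\mo{\mb{X}}_+^{\mf{s}-1}$, the factor $I-\partial_\bms{u} T$ is invertible there via Neumann series, and this determines $D\bms{u}[\eta,c]$ as a bounded element of $\mc{L}(\mb{Y}_+^{\mf{s}}\times\R,\mo{\mb{X}}_+^{\mf{s}-1})$. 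Higher formal derivatives $D^k \bms{u}$ arise recursively by the same mechanism: after differentiating the equation for $D^{k-1}\bms{u}$, the terms involving $D^k\bms{u}$ combine into $(I-\partial_\bms{u} T)D^k\bms{u}$, and are then isolated by another Neumann inversion against a right-hand side built from lower-order derivatives of $\bms{u}$ and $T$.

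To verify that $D^k \bms{u}$ is genuinely the $k$-th Fréchet derivative in the appropriate weaker norm, I would compute the finite difference $\Delta_{(\dot{\eta},\dot{c})} D^{k-1}\bms{u}$ by substituting the fixed-point equation at both $(\eta,c)$ and $(\eta+\dot{\eta},c+\dot{c})$, invoke the Lipschitz and $o(\norm{\dot{\bm{\gamma}}})$--estimates for $D^k T$ from \Cref{lemma:T_deriv}, and rearrange to obtain an identity of the schematic form
\begin{equation*}
    \parn[\big]{I-\partial_\bms{u} T}\parn[\big]{\Delta_{(\dot{\eta},\dot{c})} D^{k-1}\bms{u}-D^k \bms{u}\cdot (\dot{\eta},\dot{c})}=o\parn[\big]{\norm{\dot{\eta}}_s+\abs{\dot{c}}}
\end{equation*}
in $\mo{\mb{X}}_+^{s-(k+1)}$, with the error measured at one weaker level of regularity. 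A final Neumann inversion then yields the required differentiability, and the induction on $k$ closes because each step loses exactly one derivative on the output side, matching the indices in \Cref{lemma:T_deriv} line by line.

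The main obstacle will be bookkeeping. Each of the three estimates (boundedness, Lipschitz in the parameter, and approximability by the formal derivative up to an $o$--remainder) must be propagated inductively while carefully coordinating the regularity indices of the parameter norm with the loss-of-regularity factors coming from $T$, and one must ensure at every stage that the formal derivative produced by the Neumann inversion genuinely agrees with the forward finite difference in the next weaker norm. A clean implementation is precisely what is packaged abstractly by \Cref{Appendix:fixed_points}, so in practice the proof reduces to checking that the hypotheses of that abstract result are satisfied by $T$ via \Cref{lemma:T_deriv} and the choice of $\beta$.
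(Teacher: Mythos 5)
Your proposal is correct and follows essentially the same route as the paper: the paper's proof of \Cref{prop:u'} consists precisely of invoking \Cref{Cor:fixedpoint}, whose hypotheses are supplied by the estimates of \Cref{lemma:T_deriv} and the smallness of $\beta$. Your unpacking of the mechanism (Neumann inversion of $\id-\partial_{\bms{u}}T$, induction on $k$, and propagation of the boundedness/Lipschitz/$o$-remainder triple with one derivative lost per step) is a faithful description of what \Cref{Prop:fixed_point_diff} and \Cref{Cor:fixedpoint} carry out internally.
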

\begin{proof}
    This follows by applying \Cref{Cor:fixedpoint}.
\end{proof}

\section{Main result}
\label{sec:Proof of main result}
\subsection{The dynamic boundary condition}
\label{sec:Dynamic BC}

For any fixed $(\eta,c) \in \mc{V}_\epsilon^{\mf{s}+1} \times \mc{I}$, we have constructed the solution
\[
    \bm{u}[\eta,c] = c \bmf{u}[\eta] + \bms{u}[\eta,c]
\]
to~\eqref{eq:flattened_euler}--\eqref{eq:flattened_kinematic}, where $\bmf{u}[0]=\bm{e}_1$, and $\bms{u}[0,c]=0$ for all $c \in \mc{I}$. By now substituting this solution into the dynamic boundary condition~\eqref{eq:dynamic_bdry_flattened}, we obtain the equation
\begin{equation}
    \label{eq:problem_reduced_to_bdry}
    F[\eta,c]+R[\eta,c]=0
\end{equation}
where $F \colon \mc{V}^{\mf{s}+1}_\epsilon\times \mc{I}\to \mb{Y}_+^{\mf{s}-1}$ and $R \colon \mc{V}^{\mf{s}+1}_\epsilon\times \mc{I}\to \mb{Y}_+^s$ are defined by
\begin{align*}
    F[\eta,c] & \ceq \frac{c^2}{2\rho^2}\parn[\big]{\abs{\bmf{u}[\eta]}^2 + (\nabla \eta \cdot \bmf{u}[\eta])^2} + g\eta + \sigma \nabla \cdot \bm{n}[\eta]-Q(c) \\
    \shortintertext{and}
    R[\eta,c] & \ceq
    \begin{multlined}[t]
        \frac{1}{2\rho^2}\parn[\big]{\abs{\bms{u}[\eta,c]}^2 + (\nabla\eta \cdot \bms{u}[\eta])^2}\\
        +\frac{c}{\rho^2}\parn[\big]{\bmf{u}[\eta] \cdot \bms{u}[\eta,c] + (\nabla \eta \cdot \bmf{u}[\eta])(\nabla \eta \cdot \bms{u}[\eta,c])}-h\parn[\big]{q\brak[\big]{c \bmf{u}[\eta] + \bms{u}[\eta,c],\eta}},
    \end{multlined}
\end{align*}
respectively. It is important to note that we have both suppressed the restriction to $S^0$, and tacitly dropped the final (zero) component of $\bms{u}$ and $\bmf{u}$ in all but the final term involving $q$.

While $F$ is smooth, the map $R$ is not differentiable in the usual sense, since neither $q$ nor $\bms{u}$ is differentiable. However, as we have come to expect, it does inherit some regularity properties with respect to a weaker norm from $q$ and $\bms{u}$. These are summarised in the lemma below.

\begin{lemma}
    \label{lemma:R_properties}
    There are formal derivatives
    \[
        \map{D^k R}{\mc{V}^{\mf{s}+1}_\epsilon\times \mc{I}}{\mc{L}^k\parn[\big]{\mb{Y}^{\mf{s}}_+\times \R,\mb{Y}_+^{\mf{s}-k}}}
    \]
    for $1 \leq k < \mf{s} -2$ such that, for $k \geq 0$,
    \begin{align*}
        \norm{D^k R[\eta,c]}_{\mc{L}^k\parn[\big]{\mb{Y}^s_+\times \R,\mb{Y}_+^{s-k}}}                                                                       & \lesssim \beta                                                  &  & k+1 < s \leq \mf{s}, \\
        \norm{\Delta_{(\dot{\eta},\dot{c})} D^k R[\eta,c]}_{\mc{L}^k\parn[\big]{\mb{Y}^s_+\times \R,\mb{Y}_+^{s-(k+1)}}}                                     & \lesssim \beta \parn[\big]{\norm{\dot{\eta}}_s + \abs{\dot{c}}} &  & k+2 < s \leq \mf{s}, \\
        \shortintertext{and}
        \norm{\Delta_{(\dot{\eta},\dot{c})}D^k R[\eta,c]-D^{k+1}R[\eta,c](\dot{\eta},\dot{c})}_{\mc{L}^k\parn[\big]{\mb{Y}^s_+\times \R,\mb{Y}_+^{s-(k+2)}}} & =o\parn[\big]{\norm{\dot{\eta}}_s + \abs{\dot{c}}}              &  & k+3 < s \leq \mf{s}
    \end{align*}
    for all $(\eta,c),(\eta,c)+(\dot{\eta},\dot{c}) \in \mc{V}_\epsilon^{\mf{s}+1} \times \mc{I}$. Moreover,
    \[
        \partial_\eta R[0,c] = 0
    \]
    for all $c \in \mc{I}$.
\end{lemma}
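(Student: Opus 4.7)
The plan is to view $R$ as a composition of analytic/polynomial maps with the fixed-point map $\bms{u}[\eta,c]$ from \Cref{prop:u'} and with the time function $q$ from \Cref{prop:transport_equation_existence_uniqueness}, and then to deduce the formal-derivative bounds by repeated application of the loss-of-regularity composition result \Cref{Cor:composition} in the appendix. Every factor that is analytic in $\eta$ or polynomial in $\bms{u}$ is inherently smoother than that corollary demands, so regularity is only ever lost through $\bms{u}$ and $q$, and in precisely the same way as in \Cref{prop:u'} and \Cref{prop:g_second_derivative_u}.

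First, I would split the expression \eqref{eq:Rdef} into the polynomial/trace group built from $\bmf{u}[\eta]$, $\bms{u}[\eta,c]$, $\nabla \eta$, and $1/\rho^2$, and the single composition $h(q[c\bmf{u}[\eta]+\bms{u}[\eta,c],\eta])$. For the polynomial group, analyticity of $\bmf{u}$ in $\eta$ (\Cref{prop:div_curl_problem}) plus the bounds on $\bms{u}$ (\Cref{prop:u'}) feed directly into \Cref{Cor:composition}, and the boundary trace $C^s(\ol{\Omega^0})\to C^s(\R^2)$ is bounded for every $s>0$. For the second group I would first assemble $(c\bmf{u}[\eta]+\bms{u}[\eta,c],\eta)\in \mc{U}_\delta^{\mf{s}}\times \mc{V}_\epsilon^{\mf{s}+1}$ using the same two propositions, then compose with $q$ (whose loss-of-regularity derivatives are furnished by \Cref{prop:g_second_derivative_u}), and finally with the fixed function $h$, which is a standard smooth Nemytskii step on Hölder spaces. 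The overall $\beta$ prefactor is accounted for by $\bms{u}=O(\beta)$ and by $\|h\|_{\mf{s}}\leq \beta$.

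For the identity $\partial_\eta R[0,c]=0$, I would evaluate at $(\eta,c)=(0,c)$, where $\bmf{u}[0]=\bm{e}_1$, $\bms{u}[0,c]=0$, $\rho=1$, $\nabla\eta=0$, and $q[c\bm{e}_1,0]\equiv \lambda_1/c$. Every polynomial term other than $c\bmf{u}\cdot\bms{u}$ contains a vanishing factor of $\bms{u}[0,c]$ or $\nabla\eta$, so the only surviving contribution is
\[
\partial_\eta R[0,c]\dot\eta = c\,\bm{e}_1\cdot \partial_\eta\bms{u}[0,c]\dot\eta - h'\parn[\big]{\tfrac{\lambda_1}{c}}\brak[\big]{\partial_\bm{u} q[c\bm{e}_1,0]\parn[\big]{cD\bmf{u}[0]\dot\eta+\partial_\eta\bms{u}[0,c]\dot\eta}+\partial_\eta q[c\bm{e}_1,0]\dot\eta}.
\]
The identity \eqref{eq:q_derivative_identity} kills the two terms that do not involve $\partial_\eta\bms{u}[0,c]$, so it remains to show $\partial_\eta\bms{u}[0,c]=0$. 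For this, I would differentiate the fixed-point equation $\bms{u}=T[\bms{u},\eta,c]$ in $\eta$ at $(0,c)$: since $\partial_\eta T[0,0,c]=0$ by \eqref{eq:T_derivatives} and $\partial_\bms{u} T[0,0,c]$ is a contraction on $\mo{\mb{X}}_+^{\mf{s}-1}$ by the $\beta$-smallness used just before \Cref{prop:u'}, the differentiated fixed-point relation becomes $(I-\partial_\bms{u} T[0,0,c])\partial_\eta\bms{u}[0,c]=0$, from which $\partial_\eta\bms{u}[0,c]=0$ and $\partial_\eta R[0,c]=0$ follow.

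The main obstacle is not conceptual but combinatorial: at every application of \Cref{Cor:composition}, one must verify that the step-sizes of the regularity hierarchies decreed in \Cref{prop:u'} and \Cref{prop:g_second_derivative_u} telescope correctly into the scale announced in the lemma. This index-chasing is tedious but entirely mechanical once the splitting above is in place.
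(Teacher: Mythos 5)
Your proposal is correct and follows essentially the same route as the paper: the regularity estimates come from viewing $R$ as a composition and repeatedly invoking \Cref{Cor:composition}, with the $\beta$ factor traced to $h$ and to \Cref{prop:u'}, and the identity $\partial_\eta R[0,c]=0$ reduces via \eqref{eq:q_derivative_identity} and \eqref{eq:T_derivatives} to the differentiated fixed-point relation for $\bms{u}$. The only (immaterial) difference is that you first deduce $\partial_\eta\bms{u}[0,c]=0$ from invertibility of $\id-\partial_{\bms{u}}T[0,0,c]$, whereas the paper simply recognises the surviving expression as $c\bm{e}_1\cdot\partial_\eta T[0,0,c]\dot{\eta}=0$ without needing that inverse.
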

\begin{proof}
    We use the fact that $R$ can be seen as a composition of several maps, for which we already know the regularity. Applying \Cref{Cor:composition} repeatedly gives the first part of the result, except for the factor $\beta$ in the norm and Lipschitz estimates. For the last term, it is evident that we have this factor $\beta$. For the remaining terms, we get a factor $\beta$ from \Cref{prop:u'}.

    To prove the last part of the lemma, we note that
    \begin{align*}
        \partial_\eta R[0,c]\dot{\eta} & = c \bm{e}_1 \cdot \partial_\eta\bms{u}[0,c]\dot{\eta}-h'\parn[\Big]{\frac{\lambda_1}{c}}\parn[\big]{\partial_\bm{u}q[c\bm{e}_1,0](\partial_\eta \bms{u}[0,c] + cD\bmf{u}[0])+\partial_\eta q[c\bm{e}_1,0]}\dot{\eta} \\
                                       & = c\bm{e}_1 \cdot \parn[\big]{\parn{\partial_\eta \bms{u}[0,c] - \partial_\bms{u}T[0,0,c]\partial_\eta \bms{u}[0,c]}\dot{\eta}}
    \end{align*}
    by~\eqref{eq:q_derivative_identity} and \eqref{eq:T_derivatives}. This expression vanishes because
    \[
        \partial_\eta \bms{u}[0,c] - \partial_\bms{u}T[0,0,c]\partial_\eta \bms{u}[0,c] = \partial_\eta T[0,0,c] = 0
    \]
    by virtue of $\bms{u}$ being a fixed point, and the fact that $\partial_\eta T[0,0,c] \equiv 0$.
\end{proof}

For the smooth part $F$ in~\eqref{eq:problem_reduced_to_bdry}, we have the derivative
\[
    \partial_\eta F[0,c]\dot{\eta}=g\dot{\eta}-\sigma\Delta\dot{\eta}+c^2\partial_1^2 \chi[\dot{\eta}],
\]
where $\chi$ is the potential from \Cref{prop:bmf_derivative}. Let
\[
    L[c]\ceq \partial_\eta F[0,c] \colon\mb{Y}^{\mf{s}+1}_+ \to \mb{Y}^{\mf{s}-1}_+,
\]
which we can extend to an operator $\mf{L}[c] \colon \mb{Y}^{\mf{s}+1}\to \mb{Y}^{\mf{s}-1}$. Here $\mb{Y}^\mf{s}$ denotes the space of all $\Lambda$-periodic $C^\mf{s}(\R^2)$ functions, with no symmetry requirement. The dispersion relation is then obtained by considering the equation
\begin{equation}
    \label{eq:F_deriv_1d_nullspace}
    \mf{L}[c]e^{i\bm{k}\cdot\bm{x}'}=0
\end{equation}
for $\bm{k} \in \Lambda^*$.
We are now ready to prove \Cref{prop:F_kernel}, concerning the number of solutions to this equation.
\begin{proof}[Proof of \Cref{prop:F_kernel}]
    Using the definition of $L[c]$, we see that
    \begin{equation}
        \label{eq:DF_on_fourier_mode}
        \mf{L}[c] e^{i\bm{k}\cdot\bm{x}'}=\ell_{\bm{k}}(c)e^{i\bm{k}\cdot\bm{x}'},
    \end{equation}
    with $\ell_{\bm{0}}(c) \ceq g$. Clearly, $\bm{\kappa}$ is a solution to~\eqref{eq:F_deriv_1d_nullspace} if and only if $c = c^*(\sigma)$, and $\bm{k}$ cannot be a solution if $k_1 = 0$. Thus, we may restrict our attention to the equivalent equation
    \[
        \sigma \parn[\big]{\abs{\bm{k}}^2\varphi_\bm{k} - \abs{\bm{\kappa}}^2\varphi_\bm{\kappa}} + g\parn[\big]{\varphi_\bm{k} - \varphi_\bm{\kappa}} = 0, \qquad \varphi_\bm{k} \ceq \frac{\abs{\bm{k}}^2}{k_1^2 \mo{\mf{f}}_{\abs{\bm{k}}}} =\frac{\abs{\bm{k}}\tanh(\abs{\bm{k}}d)}{k_1^2}
    \]
    for $\bm{k}$ with $k_1 \neq 0$.

    There are now two possibilities if $\bm{k}$ is a solution: Either
    \[
        \abs{\bm{k}}^2\varphi_\bm{k} - \abs{\bm{\kappa}}^2\varphi_\bm{\kappa} = \varphi_\bm{k} - \varphi_\bm{\kappa} = 0,
    \]
    in which case $\abs{\bm{k}} = \abs{\bm{\kappa}}$, and therefore $\bm{k} \in \brac{\pm\bm{\kappa},\pm \bm{\kappa}^*}$; or both coefficients are nonzero, and
    \[
        \sigma = \sigma^*(\bm{k}) \ceq -g\frac{\varphi_\bm{k}-\varphi_\bm{\kappa}}{\abs{\bm{k}}^2\varphi_\bm{k} - \abs{\bm{\kappa}}^2\varphi_\bm{\kappa}}.
    \]
    The result now follows by letting $\Sigma$ be the complement of the range of $\sigma^*$ over all applicable $\bm{k}$.
\end{proof}

From here on, we will assume that $\sigma \in \Sigma$, where $\Sigma$ is as described in \Cref{prop:F_kernel}, and write $c^*$ instead of $c^*(\sigma)$ for simplicity.

\begin{lemma}
    \label{lemma:F_bifurcation_properties}
    The operator $L[c^*]$ is Fredholm of index $0$, with
    \[
        \ker{L}[c^*] =\lspan{\brac{\ac{1}{\eta}}} \qquad \text{and} \qquad \mb{Y}^{\mf{s}-1}_+ = \ran{L}[c^*]\oplus\lspan{\brac{\ac{1}{\eta}}},
    \]
    where $\ac{1}{\eta}$ is like in \Cref{thm:main informal}. Moreover,
    \begin{equation}
        \label{eq:F_mixed_derivative}
        \partial_c L[c^*]\ac{1}{\eta}=-2c^* \kappa_1^2 \mo{\mf{f}}_{\abs{\bm{\kappa}}}\ac{1}{\eta}.
    \end{equation}
\end{lemma}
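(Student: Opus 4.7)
The plan is to analyse $L[c^*]$ by Fourier diagonalisation in the cosine basis natural to $(+)$-symmetric $\Lambda$-periodic functions. From the explicit formula
\[
L[c]\dot{\eta} = g\dot{\eta} - \sigma\Delta\dot{\eta} + c^2\partial_1^2\chi[\dot{\eta}]
\]
together with \eqref{eq:DF_on_fourier_mode}, $L[c^*]$ acts on each basis element $\cos(k_1 x_1)\cos(k_2 x_2)$ as multiplication by $\ell_{\bm{k}}(c^*)$, with $\ell_{\bm{0}}(c^*) = g > 0$. This diagonal picture drives the entire argument.

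For the Fredholm property I would split $L[c^*] = A + K$, where $A \coloneqq g - \sigma\Delta$ and $K \coloneqq (c^*)^2\partial_1^2\chi[\wildcard]|_{S^0}$. The Fourier symbol of $A$ is $g + \sigma|\bm{k}|^2 \geq g > 0$, so periodic Schauder theory (or direct term-by-term inversion on the cosine series combined with interior Schauder estimates, valid since $s$ is non-integer) makes $A \colon \mb{Y}^{\mf{s}+1}_+ \to \mb{Y}^{\mf{s}-1}_+$ a topological isomorphism. On the Fourier side $K$ is multiplication by $-(c^*)^2 k_1^2\mo{\mf{f}}_{|\bm{k}|}$, which grows only linearly in $|\bm{k}|$, so $K$ factors through the compact embedding $\mb{Y}^{\mf{s}}_+ \hookrightarrow \mb{Y}^{\mf{s}-1}_+$ and is compact. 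Hence $L[c^*]$ is a compact perturbation of an isomorphism, and therefore Fredholm of index zero.

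To pin down the kernel and range I would invoke \Cref{prop:F_kernel}: the only $\bm{k} \in \Lambda^*$ solving $\ell_{\bm{k}}(c^*) = 0$ are $\pm\bm{\kappa}$ and $\pm\bm{\kappa}^*$, and under the $(+)$-symmetry relation $\hat{\eta}_{R_j\bm{k}} = \hat{\eta}_{\bm{k}}$ these four exponentials combine into the single cosine $\ac{1}{\eta}$; therefore $\ker L[c^*] = \lspan\{\ac{1}{\eta}\}$. Diagonality identifies $\ran L[c^*]$ with the closed subspace of $\mb{Y}^{\mf{s}-1}_+$ on which the $\ac{1}{\eta}$-coefficient vanishes, and combining this with index zero yields the direct sum decomposition.

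The remaining formula \eqref{eq:F_mixed_derivative} follows by differentiating the expression for $L[c]$ in $c$ to obtain $\partial_c L[c]\dot{\eta} = 2c\,\partial_1^2\chi[\dot{\eta}]$ and then reading off the Fourier eigenvalue $-\kappa_1^2\mo{\mf{f}}_{|\bm{\kappa}|}$ on $\ac{1}{\eta}$, using that the four Fourier modes $\pm\bm{\kappa}, \pm\bm{\kappa}^*$ share the same values of $|\bm{k}|$ and $k_1^2$. The step I expect to require the most care is the Schauder-scale invertibility of $A$, since naive symbolic inversion of the cosine series initially produces the inverse only in an $L^2$ sense and needs an additional argument (Schauder estimates, or convolution with the torus Green's function for $g-\sigma\Delta$) to upgrade to the Hölder class; once this is in place the remainder is essentially bookkeeping on Fourier coefficients.
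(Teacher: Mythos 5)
Your proposal is correct, and its core is the same as the paper's: Fourier diagonalisation of $L[c^*]$ via \eqref{eq:DF_on_fourier_mode}, identification of the kernel through \Cref{prop:F_kernel} together with the $(+)$-symmetry relation $\dot{\eta}_{R_j\bm{k}}=\dot{\eta}_{\bm{k}}$, and differentiation of $L[c]$ in $c$ for \eqref{eq:F_mixed_derivative}. The one place you genuinely depart from the paper is the Fredholm property: the paper simply observes from the diagonal representation that the cokernel is spanned by the same four Fourier modes and concludes index zero, whereas you decompose $L[c^*]=(g-\sigma\Delta)+K$ into a Schauder-scale isomorphism plus an order-one multiplier that factors through the compact embedding $\mb{Y}^{\mf{s}}_+\hookrightarrow\mb{Y}^{\mf{s}-1}_+$ (the mapping $\mb{Y}^{\mf{s}+1}_+\to\mb{Y}^{\mf{s}}_+$ of $\partial_1^2\chi[\wildcard]$ being exactly what \Cref{prop:bmf_derivative} provides). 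Your route costs a little more machinery but buys an explicit justification of closedness of the range, which the paper's terse argument leaves implicit; both are standard and both are fine.
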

\begin{proof}
    Let
    \[
        \dot{\eta}(\bm{x}')=\sum_{\bm{k} \in \Lambda^*}\dot{\eta}_{\bm{k}}e^{i\bm{k} \cdot \bm{x}'}
    \]
    be an element in the kernel. From \Cref{prop:F_kernel}, we know that only $\dot{\eta}_\bm{k}$ with $\abs{\bm{k}}=\abs{\bm{\kappa}}$ can be nonzero, and by $(+)$-symmetry these coefficients must also be equal. Thus
    \[
        \dot{\eta}= \dot{\eta}_\bm{\kappa}\parn[\big]{e^{i\bm{\kappa} \cdot \bm{x}'} + e^{-i\bm{\kappa} \cdot \bm{x}'} + e^{i\bm{\kappa}^* \cdot \bm{x}'} + e^{-i\bm{\kappa}^* \cdot \bm{x}'}} = 4\dot{\eta}_\bm{\kappa}\ac{1}{\eta},
    \]
    showing that $\ker{L}[c^*]=\lspan{\brac{\ac{1}{\eta}}}$. From~\eqref{eq:DF_on_fourier_mode}, we also see that the cokernel of $L[c^*]$ is spanned by the same Fourier modes. Hence we get that $L[c^*]$ is a Fredholm operator of index 0.

    For the final part, we note that the derivative is given by
    \[
        \partial_c L[c^*]\dot{\eta}=2c^*\partial_1^2 \chi[\dot{\eta}]
    \]
    in general, which yields~\eqref{eq:F_mixed_derivative} after evaluating at $\dot{\eta} = \ac{1}{\eta}$.
\end{proof}
\subsection{Lyapunov--Schmidt reduction}
We are now ready to perform a variant of the usual Lyapunov--Schmidt reduction of~\eqref{eq:problem_reduced_to_bdry}, working under the same assumptions and notation of \Cref{lemma:F_bifurcation_properties}. We also define the orthogonal projections
\[
    \mc{P} \text{ onto } \ker L[c^*], \qquad \text{and} \qquad \mc{Q}=\id-\mc{P} \text{ onto } (\ker L[c^*])^\perp
\]
in $L^2(\R^2 /\Lambda)$, which restrict to projections in the $\mb{Y}_+$-spaces. In particular
\[
    \mc{P}\mb{Y}^{\mf{s}+1}_{+}=\ker{L}[c^*],
    \qquad\text{and}\qquad
    \mc{Q}\mb{Y}^{\mf{s}-1}_+=\ran{L}[c^*],
\]
allowing us to express~\eqref{eq:problem_reduced_to_bdry} as the system
\begin{align}
    \mc{Q}F[t\ac{1}{\eta}+\ac{\perp}{\eta},c]+\mc{Q}R[t\ac{1}{\eta}+\ac{\perp}{\eta},c] & =0,
    \label{eq:inf_dim_prob}                                                                   \\
    \mc{P}F[t\ac{1}{\eta}+\ac{\perp}{\eta},c]+\mc{P}R[t\ac{1}{\eta}+\ac{\perp}{\eta},c] & =0,
    \label{eq:finite_dim_prob}
\end{align}
if we write
\[
    \mc{P}\eta \eqc t\ac{1}{\eta} \qquad \text{and} \qquad \mc{Q}\eta \eqc \ac{\perp}{\eta}.
\]

Since $L[c^*]$ is invertible as an operator from $\mc{Q}\mb{Y}^{\mf{s}+1}_+$ to $\mc{Q}\mb{Y}^{\mf{s}-1}_+$, we can apply $-L[c^*]^{-1}$ to~\eqref{eq:inf_dim_prob} and get the equivalent equation
\[
    \mc{M}[\ac{\perp}{\eta},t,c] \ceq \ac{\perp}{\eta}-L[c^*]^{-1}(\mc{Q}F[t\ac{1}{\eta}+\ac{\perp}{\eta},c]+\mc{Q}R[t\ac{1}{\eta}+\ac{\perp}{\eta},c])=\ac{\perp}{\eta}
\]
after adding $\ac{\perp}{\eta}$ to both sides. If we consider $\mc{M}$ to be a map $U \times \ol{\mc{K}} \to \ol{\mc{K}}$ for sufficiently small neighbourhoods $U = V \times W$ of $(0,c^*) \in \R^2$ and $\mc{K}$ of $0 \in \mc{Q}\mb{Y}^{\mf{s}+1}_+$, then we can ensure that $\mc{M}[\wildcard,t,c]$ has a unique fixed point $\ac{\perp}{\eta}[t,c] \in \ol{\mc{K}}$; after possibly making $\beta$ smaller. This is because
\[
    \partial_{\ac{\perp}{\eta}} \mc{M}[0,0,c^*] = 0
\]
in the formal sense, combined with an argument like the one for $\bms{u}$ just prior to \Cref{prop:u'}. Crucially, we need the estimates from \Cref{lemma:R_properties} to deal with the (traditionally) non-differentiable part.

\begin{lemma}
    \label{lemma:eta_perp_differentiability}
    The uniformly bounded map $\map{\ac{\perp}{\eta}}{U}{\mc{Q}Y_+^{\mf{s}+1}}$ has formal derivatives
    \[
        \map{D^k\ac{\perp}{\eta}}{U}{\mc{L}^k\parn[\big]{\R^2,\mc{Q}\mb{Y}_+^{\mf{s}+1-(k-1)}}}
    \]
    for $1 \leq k < \mf{s} -2$ such that
    \begin{align*}
        \norm{D^k\ac{\perp}{\eta}[t,c]}_{\mc{L}^k\parn[\big]{\R^2,\mc{Q}\mb{Y}_+^{s+1-(k-1)}}}                                                                         & \lesssim 1                           &  & k+1 < s \leq \mf{s},
        \shortintertext{for $k \geq 1$, and}
        \norm{\Delta_{(\dot{t},\dot{c})}D^k\ac{\perp}{\eta}[t,c]}_{\mc{L}^k\parn[\big]{\R^2,\mc{Q}\mb{Y}_+^{s+1-k}}}                                                   & \lesssim \abs{\dot{t}}+\abs{\dot{c}} &  & k+2 < s \leq \mf{s}, \\
        \norm{\Delta_{(\dot{t},\dot{c})}D^k\ac{\perp}{\eta}[t,c]-D^{k+1}\ac{\perp}{\eta}[t,c](\dot{t},\dot{c})}_{\mc{L}^k\parn[\big]{\R^2,\mc{Q}\mb{Y}_+^{s+1-(k+1)}}} & =o(\abs{\dot{t}}+\abs{\dot{c}})      &  & k+3 < s \leq \mf{s},
    \end{align*}
    for $k \geq 0$, for all $(t,c),(t,c)+(\dot{t},\dot{c}) \in U$. Moreover,
    \begin{equation}
        \label{eq:eta_perp_vanishing}
        \ac{\perp}{\eta}[0,c] = \partial_t \ac{\perp}{\eta}[0,c^*] = 0
    \end{equation}
    for all $c \in W$.
\end{lemma}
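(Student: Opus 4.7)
The plan is to apply \Cref{Cor:fixedpoint} to the fixed-point equation \eqref{eq:inf_dim_prob_rewritten}, in direct analogy with the proof of \Cref{prop:u'}. The operator $\mc{M}$ is built by composing the bounded linear map $L[c^*]^{-1}\mc{Q}$, which gains two derivatives in going from $\mc{Q}\mb{Y}_+^{s-1}$ into $\mc{Q}\mb{Y}_+^{s+1}$, with the sum $F+R$. The smooth part $F$ loses two derivatives because of the curvature term, so $L[c^*]^{-1}\mc{Q}F$ carries no net regularity deficit; the rough part $R$ carries the loss-of-regularity hierarchy catalogued in \Cref{lemma:R_properties}, and composition with $L[c^*]^{-1}\mc{Q}$ turns its $k$-th derivative bound $\mb{Y}_+^s \rightsquigarrow \mb{Y}_+^{s-k}$ into $\mb{Y}_+^s \rightsquigarrow \mc{Q}\mb{Y}_+^{s-k+2}$, matching the target space $\mc{Q}\mb{Y}_+^{s+1-(k-1)}$ in the statement. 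Feeding these estimates through \Cref{Cor:composition} gives the full formal-derivative hierarchy of $\mc{M}$, while the contraction of $\mc{M}[\wildcard,t,c]$ in the $\mf{s}-1$ norm (already used to define $\ac{\perp}{\eta}$ via Banach's theorem) is the remaining hypothesis of \Cref{Cor:fixedpoint}. This yields the claimed estimates for $\ac{\perp}{\eta}$.

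For the first identity in \eqref{eq:eta_perp_vanishing}, I would substitute $\eta = 0$ into \eqref{eq:Rdef}. Since $\bms{u}[0,c]=0$, $\bmf{u}[0]=\bm{e}_1$, $q[c\bm{e}_1,0]=\lambda_1/c$, and $\rho\equiv 1$, the definition of $Q(c)$ in \eqref{eq:dynamic_constant} gives
\[
F[0,c]+R[0,c]=\frac{c^2}{2}-Q(c)-h\!\parn[\Big]{\frac{\lambda_1}{c}}=0
\]
for every $c\in\mc{I}$, so $\ac{\perp}{\eta}=0$ is a fixed point of $\mc{M}[\wildcard,0,c]$, and uniqueness forces $\ac{\perp}{\eta}[0,c]=0$ on $W$.

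For the second identity, I would formally differentiate the fixed-point identity at $(t,c)=(0,c^*)$, obtaining $\partial_t\ac{\perp}{\eta}[0,c^*]=\partial_t \mc{M}[0,0,c^*]$ since the formal derivative $\partial_{\ac{\perp}{\eta}}\mc{M}[0,0,c^*]=\id - L[c^*]^{-1}\mc{Q} L[c^*]$ vanishes on $\mc{Q}\mb{Y}_+^{\mf{s}+1}$. Combining $\partial_\eta R[0,c^*]=0$ from \Cref{lemma:R_properties} with $\partial_\eta F[0,c^*]=L[c^*]$ from \Cref{lemma:F_bifurcation_properties}, this reduces to
\[
\partial_t\ac{\perp}{\eta}[0,c^*] = -L[c^*]^{-1}\mc{Q} L[c^*]\ac{1}{\eta} = 0,
\]
because $L[c^*]\ac{1}{\eta}=0$ by \Cref{lemma:F_bifurcation_properties}.

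The principal obstacle is the bookkeeping: verifying that the formal derivative and Lipschitz estimates of $R$ really do line up with the indices demanded by \Cref{Cor:fixedpoint} once composed with $L[c^*]^{-1}\mc{Q}$. Since essentially the same scheme was carried out for $\bms{u}$ in \Cref{prop:u'}, the adaptation here is mostly mechanical, the only new ingredient being that the gain of two derivatives from $L[c^*]^{-1}\mc{Q}$ exactly compensates both the two-derivative loss of $F$ and (with one derivative to spare) the one-derivative loss of $R$.
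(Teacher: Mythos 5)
Your proposal is correct and follows essentially the same route as the paper: feed the estimates of \Cref{lemma:R_properties} and the smoothness of $F$ through the two-derivative gain of $L[c^*]^{-1}\mc{Q}$ into the fixed-point machinery of \Cref{Appendix:fixed_points}, then verify \eqref{eq:eta_perp_vanishing} from $\mc{M}[0,0,c]=0$ and from $\partial_t\mc{M}[0,0,c^*]=-L[c^*]^{-1}\mc{Q}L[c^*]\ac{1}{\eta}=0$. The only cosmetic difference is that the paper invokes \Cref{Prop:fixed_point_diff} (with its general indices) for $k=0,1$ and reserves \Cref{Cor:fixedpoint} for higher $k$, since the index pattern here is shifted by one from the standard one in the corollary; your bookkeeping of that shift via the identity $s-k+2=(s+1)-(k-1)$ is exactly the right observation.
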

\begin{proof}
    We can apply \Cref{Prop:fixed_point_diff} to $\mc{M}$ to obtain the bounds for $k = 0,1$, while \Cref{Cor:fixedpoint} must be applied instead for higher values of $k$.

    For the last part, it is clear by definition that $\mc{M}[0,0,c] = 0$ for every $c \in W$, which immediately yields $\ac{\perp}{\eta}[0,c]=0$. Furthermore (see \Cref{Appendix:fixed_points}),
    \begin{align*}
        \partial_t \ac{\perp}{\eta}[0,c^*] & = \parn[\big]{\id - \partial_{\ac{\perp}{\eta}}\mc{M}[0,0,c^*]}^{-1}\partial_t \mc{M}[0,0,c^*] \\
                                           & =\partial_t \mc{M}[0,0,c^*]                                                                    \\
                                           & = -L[c^*]^{-1}QL[c^*]\ac{1}{\eta}
    \end{align*}
    vanishes since $\ac{1}{\eta} \in \ker{L}[c^*]$.
\end{proof}

This means that we can substitute $\ac{\perp}{\eta}[t,c]$ into~\eqref{eq:finite_dim_prob} to obtain the two-dimensional problem
\begin{equation}
    \label{eq:finite_dim_prob_eta_perp}
    K[t,c] \ceq \mc{P}F[t\ac{1}{\eta}+\ac{\perp}{\eta}[t,c],c]+\mc{P}R[t\ac{1}{\eta}+\ac{\perp}{\eta}[t,c],c]=0
\end{equation}
for $(t,c) \in U$.
By applying \Cref{Cor:composition} (see \Cref{Rem:composition}) we find that $\map{K}{U}{\ker{L}[c^*]}$ is $C^{\floor{\mf{s}}-2}$, and therefore at least twice continuously differentiable, since $\mf{s}> 4$.
Moreover, $K[0,c]=0$ for all $c \in W$ by~\eqref{eq:eta_perp_vanishing}, whence
\[
    K[t,c] = tN[t,c],
\]
where
\[
    N[t,c] \ceq \int_0^1 \partial_t K[tz,c]\,dz
\]
is $C^{\floor{\mf{s}}-3}$. In particular, this means that~\eqref{eq:finite_dim_prob_eta_perp} is equivalent to
\begin{equation}
    \label{eq:bifurcation equation}
    N[t,c]=0
\end{equation}
for nontrivial ($t \neq 0$) solutions.

We point out that
\begin{equation}
    \label{eq:H_identity}
    N[0,c] = \partial_t K[0,c] = \mc{P} L[c]\parn[\big]{\ac{1}{\eta} + \partial_t \ac{\perp}{\eta}[0,c]}
\end{equation}
for all $c \in W$, by the final part of \Cref{lemma:R_properties}. For later purposes, we also record the following result concerning the regularity of $\bm{u}$ and $\eta$, which follows by applying \Cref{Cor:composition}.
\begin{corollary}
    \label{cor:t_c_dependence_of_solutions}
    If $\map{(\bm{u},\eta)}{U}{\mo{\mb{X}}_+^\mf{s} \times\mb{Y}_+^{\mf{s}+1}}$ is defined by
    \begin{align*}
        \eta[t,c]   & \ceq t\ac{1}{\eta} + \ac{\perp}{\eta}[t,c],      \\
        \bm{u}[t,c] & \ceq c\bmf{u}[\eta[t,c]] + \bms{u}[\eta[t,c],c],
    \end{align*}
    then $(\bm{u},\eta) \in C^2\parn[\big]{U;\mo{\mb{X}}^{\mf{s}-3}_+ \times \mb{Y}_+^{\mf{s}-1}}$
\end{corollary}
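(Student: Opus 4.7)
The plan is to combine \Cref{lemma:eta_perp_differentiability} with \Cref{prop:u'} and the analyticity of $\bmf{u}$ from \Cref{prop:div_curl_problem}, invoking the abstract composition result \Cref{Cor:composition} (together with \Cref{Rem:composition}) to track the loss of derivatives through each composition.

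I will first handle the surface. The linear piece $(t,c)\mapsto t\ac{1}{\eta}$ is smooth with values in $\mb{Y}^{\mf{s}+1}_+$, while \Cref{lemma:eta_perp_differentiability} provides formal derivatives $D^k\ac{\perp}{\eta}[t,c]\in\mc{L}^k(\R^2,\mc{Q}\mb{Y}^{\mf{s}+2-k}_+)$ for $1\le k<\mf{s}-2$. Taking $k=2$ yields boundedness of $D^2\ac{\perp}{\eta}$ in $\mc{Q}\mb{Y}^{\mf{s}}_+$; Lipschitz continuity then holds in $\mc{Q}\mb{Y}^{\mf{s}-1}_+$ as $k+2<\mf{s}$, and the derivative identification $\Delta_{(\dot t,\dot c)}D^1\ac{\perp}{\eta}-D^2\ac{\perp}{\eta}(\dot t,\dot c)=o(\abs{\dot t}+\abs{\dot c})$ in that same space requires $k+3<\mf{s}$ with $k=1$. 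Both thresholds are met since $\mf{s}>4$, so $\eta[t,c]\in C^2(U;\mb{Y}^{\mf{s}-1}_+)$.

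Next I turn to $\bm{u}[t,c]=c\bmf{u}[\eta[t,c]]+\bms{u}[\eta[t,c],c]$ and treat the two summands separately. Since $\bmf{u}$ is analytic (\Cref{prop:div_curl_problem}), regarded here as a map $\mc{V}^{\mf{s}-1}_\epsilon\to\mo{\mb{X}}^{\mf{s}-2}_+$, composing it with $\eta\in C^2(U;\mb{Y}^{\mf{s}-1}_+)$ through \Cref{Cor:composition} (applied to a smooth outer map) gives $c\,\bmf{u}[\eta[\wildcard,\wildcard]]\in C^2(U;\mo{\mb{X}}^{\mf{s}-2}_+)$. For $\bms{u}$ the relevant input is \Cref{prop:u'}: the formal $D^k\bms{u}$ are bounded in $\mc{L}^k(\mb{Y}^s_+\times\R,\mo{\mb{X}}^{s-k}_+)$ for $k+1<s\le\mf{s}$, with Lipschitz estimates losing one further order and the derivative-identification estimates losing two. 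Applying \Cref{Cor:composition} to the composition of the inner map $(t,c)\mapsto(\eta[t,c],c)$ with $\bms{u}$ then produces $\bms{u}[\eta[t,c],c]\in C^2(U;\mo{\mb{X}}^{\mf{s}-3}_+)$: two orders are absorbed by taking two derivatives of $\bms{u}$, and one additional order is paid because $\eta$ is only $C^2$ into $\mb{Y}^{\mf{s}-1}_+$ (while remaining bounded in the stronger $\mb{Y}^{\mf{s}+1}_+$). Summing the two pieces yields $\bm{u}\in C^2(U;\mo{\mb{X}}^{\mf{s}-3}_+)$, as claimed.

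The only delicate point is organising the scales correctly; because $\mf{s}>4$ is precisely the minimal threshold needed in both \Cref{lemma:eta_perp_differentiability} and \Cref{prop:u'} for two genuine derivatives to exist in the relevant target spaces, the counting is tight but every required inequality is available. Beyond this bookkeeping the statement is a direct application of the composition corollary from \Cref{Appendix:composition}, so no genuine obstacle arises.
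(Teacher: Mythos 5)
Your proposal is correct and follows essentially the same route as the paper, whose entire proof is the remark that the statement ``follows by applying \Cref{Cor:composition}''; you have simply made the index bookkeeping explicit, and your counting (second derivatives of $\ac{\perp}{\eta}$ and $\bms{u}$ landing, after the composition's accumulated losses, in $\mb{Y}_+^{\mf{s}-1}$ and $\mo{\mb{X}}_+^{\mf{s}-3}$ respectively, with $\mf{s}>4$ exactly meeting the thresholds $k<\mf{s}-2$ and $k+3<s\le\mf{s}$) is consistent with \Cref{lemma:eta_perp_differentiability}, \Cref{prop:u'} and \Cref{Cor:composition}.
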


\subsection{Existence result}
We are now prepared to state and prove a more precise version of the existence part of the main theorem, \Cref{thm:main informal}, excluding the rotationality of the solutions. We postpone this to \Cref{prop:nonzero_vorticity}.

\begin{theorem}
    \label{thm:Main_result}
    If $\sigma \in \Sigma$, and $V$ is as in \Cref{lemma:eta_perp_differentiability}, then there exist solutions
    \[
        (\bm{u}[t],\eta[t],c(t))\in \mo{\mb{X}}_+^\mf{s}\times \mb{Y}^{\mf{s}+1}_+\times \R
    \]
    to~\eqref{eq:flatproblem} for all $t \in V$, such that $(\bm{u}[0],\eta[0],c(0)) = (\ac{0}{\bm{u}}[c^*],0,c^*)$,
    \[
        (\bm{u},\eta,c) \in \bigcap_{n=0}^{\floor{\mf{s}}-3} C^n\parn[\big]{V; \mo{\mb{X}}_+^{\mf{s}-1-n} \times \mb{Y}^{\mf{s}+1-n}_+ \times W},
    \]
    and
    \[
        \eta[t]=t\ac{1}{\eta} + o(t)
    \]
    in $\mb{Y}_+^\mf{s}$ as $t \to 0$. Moreover, for each $t \in V$, there is a corresponding $\md{\Lambda}$-periodic, $(+)$-symmetric solution in $C^\mf{s}(\ol{\Omega^{\eta[t]}}; \R^3)\times C^{\mf{s}+1}(\R^2)\times \R$ to the original problem~\eqref{eq:kinematic}, \eqref{eq:unflattened_problem}, obtained by inverting the flattening transform.
\end{theorem}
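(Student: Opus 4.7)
The strategy is to apply the classical finite-dimensional implicit function theorem to the bifurcation equation \eqref{eq:bifurcation equation}, namely $N[t,c]=0$, at $(t,c)=(0,c^*)$. This is a standard setting: $N$ is $C^{\floor{\mf{s}}-3}$ as recorded after \eqref{eq:bifurcation equation}, and $\ker L[c^*]=\lspan\{\ac{1}{\eta}\}$ is one-dimensional. The base equation $N[0,c^*]=0$ follows immediately from \eqref{eq:H_identity}, since $\ac{1}{\eta}\in\ker L[c^*]$ and $\partial_t\ac{\perp}{\eta}[0,c^*]=0$ by \eqref{eq:eta_perp_vanishing}. For non-degeneracy, I would differentiate \eqref{eq:H_identity} in $c$: the piece $\mc{P}L[c^*]\partial_c\partial_t\ac{\perp}{\eta}[0,c^*]$ vanishes because $L[c^*]$ sends $\mc{Q}\mb{Y}_+^{\mf{s}+1}$ into $\ran L[c^*]=\ker\mc{P}$, and the surviving contribution equals $\mc{P}(\partial_c L[c^*])\ac{1}{\eta}=-2c^*\kappa_1^2\mo{\mf{f}}_{\abs{\bm{\kappa}}}\ac{1}{\eta}$ by \eqref{eq:F_mixed_derivative}, which spans $\ker L[c^*]$. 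The IFT then furnishes a neighbourhood $V\times W$ and a $C^{\floor{\mf{s}}-3}$ function $c\colon V\to W$ with $c(0)=c^*$ and $N[t,c(t)]\equiv 0$.

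With $c$ in hand, I would define $(\bm{u}[t],\eta[t],c(t))$ as in \Cref{cor:t_c_dependence_of_solutions}; this automatically solves \eqref{eq:flatproblem} by construction, and the pointwise memberships $\eta[t]\in\mb{Y}_+^{\mf{s}+1}$, $\bm{u}[t]\in\mo{\mb{X}}_+^{\mf{s}}$ are immediate from the target spaces of $\bmf{u}$, $\bms{u}$, and $\ac{\perp}{\eta}$. For the $C^n$-intersection, I would iterate the chain rule using \Cref{prop:u'}, \Cref{lemma:eta_perp_differentiability}, the smoothness of $c$, and \Cref{Cor:composition}: each $t$-differentiation costs one spatial derivative through these scales, which accounts for the exponents $\mf{s}+1-n$ on $\eta$ and $\mf{s}-1-n$ on $\bm{u}$ (the extra unit gap for $\bm{u}$ reflecting the Lipschitz loss in $\bms{u}[\eta,c]$ from \Cref{prop:u'}). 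The expansion $\eta[t]=t\ac{1}{\eta}+o(t)$ in $\mb{Y}_+^{\mf{s}}$ is then a first-order Taylor expansion of $t\mapsto\ac{\perp}{\eta}[t,c(t)]$ at $0$, valued in $\mb{Y}_+^{\mf{s}}$, combined with $\ac{\perp}{\eta}[0,c^*]=\partial_t\ac{\perp}{\eta}[0,c^*]=0$ via \eqref{eq:eta_perp_vanishing}.

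To pass to the physical domain, I would compose with the inverse of the flattening diffeomorphism $\Pi[\eta[t]]\colon\Omega^0\to\Omega^{\eta[t]}$, which is of class $C^{\mf{s}+1}$ for each $t$ and preserves both regularity and $(+)$-symmetry. For the refined $\md{\Lambda}$-periodicity, I would observe that $\ac{1}{\eta}(\bm{x}')=\cos(\kappa_1 x_1)\cos(\kappa_2 x_2)$ is itself $\md{\Lambda}$-periodic, and that every step of the construction --- the div-curl analysis, the transport theory for $\tau$ and $q$, the fixed point $\bms{u}$, and the Lyapunov--Schmidt reduction --- restricts faithfully to the closed invariant subspace of $\md{\Lambda}$-periodic, $(+)$-symmetric profiles, where the non-degeneracy \eqref{eq:F_mixed_derivative} persists. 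Uniqueness in the IFT then forces the already-constructed branch to lie in this subspace. The principal obstacle throughout is the careful bookkeeping of regularity losses under the chain rule, but this is precisely what the fine-grained scales in \Cref{lemma:eta_perp_differentiability,prop:u',Cor:composition} are designed to absorb.
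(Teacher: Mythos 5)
Your proposal is correct and follows essentially the same route as the paper: the implicit function theorem applied to $N[t,c]=0$ at $(0,c^*)$, with the base point and non-degeneracy supplied by \eqref{eq:H_identity}, \eqref{eq:eta_perp_vanishing} and \eqref{eq:F_mixed_derivative}, followed by tracing back through the Lyapunov--Schmidt reduction for the regularity and expansion claims and inverting the flattening transform. The only minor divergence is the $\md{\Lambda}$-periodicity step: the paper translates $\eta[t]$ by the half-period $\bm{\lambda}/2$, checks that the translate is again a $(+)$-symmetric solution with the same projection onto $\ker L[c^*]$, and concludes by uniqueness, whereas you propose restricting the whole construction to the invariant subspace of $\md{\Lambda}$-periodic profiles --- both work, but the paper's translation argument avoids having to re-verify invariance of every intermediate operator.
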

\begin{proof}
    Differentiating~\eqref{eq:H_identity}, we obtain
    \begin{align*}
        \partial_c N (0,c^*) & = \mc{P}\partial_cL[c^*](\ac{1}{\eta}+\partial_t\ac{\perp}{\eta}[0,c^*])+\mc{P}L[c^*]\partial_c\partial_t\ac{\perp}{\eta}[0,c^*] \\
                             & =\mc{P}\partial_cL[c^*]\ac{1}{\eta},
    \end{align*}
    which is nonzero by~\eqref{eq:F_mixed_derivative}. Here we have used both~\eqref{eq:eta_perp_vanishing} from \Cref{lemma:eta_perp_differentiability}, and the fact that $\mc{P}L[c^*] = 0$ by definition of $\mc{P}$. The implicit function theorem now implies that all solutions to~\eqref{eq:bifurcation equation} are of the form $(t,c(t))$ for $t \in V$, with $c[0]=c^*$, possibly after shrinking $V$.

    Tracing back, it is clear that
    \begin{align*}
        \eta[t]   & = t\ac{1}{\eta} + \ac{\perp}{\eta}[t,c(t)],    \\
        \bm{u}[t] & = c(t)\bmf{u}[\eta[t]] + \bms{u}[\eta[t],c(t)]
    \end{align*}
    is the desired corresponding surface profile and velocity field, respectively. The regularity properties of the maps $\bm{u}$ and $\eta$ follow from \Cref{Cor:composition}, and the expansion of $\eta$ from~\eqref{eq:eta_perp_vanishing}.

    To show that $\bm{u}[t]$ and $\eta[t]$ are in fact $\md{\Lambda}$-periodic, note that $\eta[t](\wildcard + \bm{\lambda}/2)$ also gives a solution with the same projection onto the kernel. Moreover, it is $(+)$-symmetric since
    \[
        \eta[t](R_j \bm{x}' + \bm{\lambda}/2) = \eta[t](\bm{x}' + R_j \bm{\lambda}/2) = \eta[t](\bm{x}' + \bm{\lambda}/2),
    \]
    where the first equality is $(+)$-symmetry of $\eta[t]$ and the second is $\Lambda$-periodicity. We conclude by uniqueness that $\eta[t](\wildcard + \bm{\lambda}/2) = \eta[t]$, and similarly that $\eta[t](\wildcard + \bm{\lambda}^*/2) = \eta[t]$. The same is clearly true for $\bm{u}$.

    Finally, the inverse of the flattening transformation introduced in \Cref{sec:flattening} is given by composition with $\Pi^{-1}(\bm{x})=\bm{x}- \rho^{-1}\pi(\bm{x})\bm{e}_3$ and multiplication of the velocity field by $(\rho^{-1}J) \circ \Pi^{-1}$, from which the existence of a solution to~\eqref{eq:kinematic}, \eqref{eq:unflattened_problem} with the stated properties follows.
\end{proof}

\subsection{Expansion of the solution}
We may, without loss of generality, suppose that
\[
    cQ'(c) = c^2 + \ac{0}{q}[c]h'(\ac{0}{q}[c]) > 0
\]
for all $c \in W$, by possibly shrinking $\beta$. If, in addition
\[
    h'(\ac{0}{q}[c^*]) \neq 0,
\]
then we can show that \Cref{thm:Main_result} will produce solutions with nonzero vorticity when $0 < \abs{t} \ll 1$.

We will prove this fact by expanding the solution curve in terms of $t$. Instead of directly expanding everything with respect to $t$, we follow the lines of the proof. That is, we find a solution $\bm{u}[t,c],\eta[t,c]$ to problem~\eqref{eq:flatproblem}, except that the dynamic boundary condition is projected onto $(\ker L[c^*])^\perp$. Finally, we determine the expansion of $c(t)$ so that also its projection onto $\ker L[c^*]$ is satisfied.

One of the main reasons for expanding the functions in this manner, is that they are twice differentiable (in the sense of \Cref{lemma:eta_perp_differentiability}) as functions of $c$ and $t$. This means that they can be expanded to second order in terms of these variables. On the other hand, if $\mf{s}<5$, then $c$ as a function of $t$ is only $C^1$, so the full solution can only be expanded to first order in $t$. In the end this means that we obtain slightly more information by expanding in two steps. We still do not get a second order expansion of the full solution in the worst case scenario, but we get sufficient information about the second order terms to show that the solution has nonzero vorticity.

Our starting point is the expansion
\begin{equation}
    \label{eq:sol_expansion_1}
    \begin{alignedat}{9}
        \bm{u}[t,c] & = & \ac{0}{\bm{u}}[c]+\ac{1}{\bm{u}}[c]t & +\ac{2}{\bm{u}}[c]t^2 &  & +o(t^2) &  & \quad \text{in } \mo{\mb{X}}_+^{\mf{s}-3},           \\
        \eta[t,c]   & = & \ac{1}{\eta}t                        & +\ac{2}{\eta}[c]t^2   &  & +o(t^2) &  & \quad \text{in } \mb{Y}_{+}^{\mf{s}-1},              \\
        \tau[t,c]   & = & \ac{0}{\tau}[c]+\ac{1}{\tau}[c]t     & +\ac{2}{\tau}[c]t^2   &  & +o(t^2) &  & \quad \text{in } C_\loc^{\mf{s}-3}(\ol{\Omega^0}),   \\
        q[t,c]      & = & \ac{0}{q}[c]+\ac{1}{q}[c]t           & +\ac{2}{q}[c]t^2      &  & +o(t^2) &  & \quad \text{in } \mb{Y}_+^{\mf{s}-3}(\ol{\Omega^0}),
    \end{alignedat}
\end{equation}
as $t \to 0$, which exists from \Cref{thm:Main_result}, \Cref{cor:t_c_dependence_of_solutions}, \Cref{Cor:composition} and the results in \Cref{sec:transport}. Here $\ac{0}{\bm{u}}$ is as in~\eqref{eq:trivial_solutions}, $\ac{0}{\tau}$ and $\ac{0}{q}$ are as in~\eqref{eq:trivial_time_functions}, and $\ac{1}{\eta}$ is as in \Cref{thm:main informal}. While the coefficients depend on $c \in W$ in general, we will often suppress this argument for notational convenience.

Substituting these expressions into the governing equations, the terms of order $t^0$ clearly vanish by design, and identifying the terms of order $t$ and $t^2$ yields the following:
\subsubsection*{Terms of order $t$}
By using the explicit expressions for $\ac{0}{\tau}$ and $\ac{0}{q}$, we find that~\eqref{eq:flattened_euler} is
\[
    \nabla \times (\ac{1}{\bm{u}} + B[\ac{1}{\eta}]\ac{0}{\bm{u}}) = \frac{1}{c}h'(\ac{0}{q}) \nabla \times (\ac{1}{q}\bm{e}_1)
\]
at the first order, with $B$ as defined in \Cref{lemma:derivative_C}. Consequently
\begin{align*}
    \ac{1}{\bm{u}} & = -B[\ac{1}{\eta}]\ac{0}{\bm{u}} + \frac{1}{c}h'(\ac{0}{q})\ac{1}{q}\bm{e}_1 + \nabla \ac{1}{\phi} + \ac{1}{U}\bm{e}_1                                       \\
                   & =\parn[\Big]{\frac{1}{c}h'(\ac{0}{q})\ac{1}{q} + \frac{c}{d}\ac{1}{\eta} + \ac{1}{U}}\bm{e}_1 - c\partial_1 \pi[\ac{1}{\eta}]\bm{e}_3 + \nabla \ac{1}{\phi},
\end{align*}
where $\ac{1}{\phi}$ is $\Lambda$-periodic and $\ac{1}{U}$ is a constant, to be determined. There is no constant in the $\bm{e}_2$-direction by the $(+)$-symmetry of $\bm{u}$.

From~\eqref{eq:flattened_time} and \eqref{eq:flattened_time_boundary}, we see that
\[
    c\partial_1 \ac{1}{\tau} = \frac{1}{d}\ac{1}{\eta} - \frac{1}{c}\ac{1}{u}_1, \qquad \text{and} \qquad \rest[\big]{\ac{1}{\tau}}_{x_1 = 0} = 0,
\]
and therefore
\[
    c \ac{1}{\tau} = \int_0^{x_1} \parn[\Big]{\frac{1}{d}\ac{1}{\eta}-\frac{1}{c}\ac{1}{u}_1}(y_1,x_2,x_3)\,dy_1 \qquad \text{and} \qquad \ac{1}{q} = -\frac{1}{c^2}\int_0^{\lambda_1} \ac{1}{u}_1\,dx_1 = 0,
\]
where~\eqref{eq:integral_cond_flattened} was used for the final equality. With the knowledge that $\ac{1}{q} = 0$, the same equality with the expression for $\ac{1}{\bm{u}}$ inserted also yields $\ac{1}{U} = 0$.

Next,~\eqref{eq:flattened_divergence} and \eqref{eq:flattened_kinematic} impose the equations
\[
    \Delta \ac{1}{\phi} = 0 \quad \text{in }\Omega_0^0, \qquad \text{and} \qquad \partial_3\ac{1}{\phi} = c\partial_1 \pi[\ac{1}{\eta}] \quad \text{on } \partial \Omega_0^0,
\]
for $\ac{1}{\phi}$, entailing that $\ac{1}{\phi} = c \partial_1 \chi[\ac{1}{\eta}] = c \partial_1 \ac{1}{\eta}\mf{f}_{\abs{\bm{\kappa}}}$. In summary
\[
    \ac{1}{\bm{u}} = c\parn[\Big]{\frac{1}{d}\ac{1}{\eta}\bm{e}_1 - \partial_1 \pi[\ac{1}{\eta}]\bm{e}_3} + \nabla \ac{1}{\phi}
\]
leading to
\[
    \ac{1}{\tau} = - \frac{1}{c^2}\ac{1}{\phi}
\]
by using the integral formula above.

Finally, the dynamic boundary condition~\eqref{eq:dynamic_bdry_flattened} becomes
\[
    \ell_{\bm{\kappa}}(c)\ac{1}{\eta} = 0
\]
at order $t$, and is therefore satisfied in the sense we require at this point. Note that it is satisfied exactly at $c=c^*$, since $\ell_{\bm{\kappa}}(c^*)=0$.

\subsubsection*{Terms of order $t^2$:} For the second-order terms, we apply the same overall strategy. The main difference is that the resulting equations are significantly more involved.

This time, the conclusion from~\eqref{eq:flattened_euler} is that
\[
    \ac{2}{\bm{u}} = -B[\ac{1}{\eta}]\ac{1}{\bm{u}} - \parn[\big]{B[\ac{2}{\eta}] + M[\ac{1}{\eta}]}\ac{0}{\bm{u}} + \frac{1}{c}h'(\ac{0}{q})\ac{2}{q}\bm{e}_1 + \nabla \ac{2}{\phi} + \ac{2}{U}\bm{e}_1
\]
where
\[
    M[\dot{\eta}] \ceq \parn[\Big]{\nabla_{\bm{x}'} \pi[\dot{\eta}] \otimes \nabla_{\bm{x}'} \pi[\dot{\eta}] + \frac{1}{d^2}\dot{\eta}^2\id_{\R^2}} \oplus 0,
\]
and $\ac{2}{\phi}$ is $\Lambda$-periodic and $\ac{2}{U}$ is constant, like before. Now, one may check that the identity
\[
    B[\ac{1}{\eta}]\ac{1}{\bm{u}} + M[\ac{1}{\eta}]\ac{0}{\bm{u}} = B[\ac{1}{\eta}]\nabla \ac{1}{\phi}
\]
holds, which simplifies the expression for $\ac{2}{\bm{u}}$.

At the second order,~\eqref{eq:flattened_time} and \eqref{eq:flattened_time_boundary} read
\[
    c \partial_1 \ac{2}{\tau} = - \ac{1}{\bm{u}} \cdot \nabla \ac{1}{\tau} + \frac{1}{d}\ac{2}{\eta} - \frac{1}{c}\ac{2}{u}_1, \qquad \text{and} \qquad \rest[\big]{\ac{2}{\tau}}_{x_1=0} = 0,
\]
where we compute that
\begin{align*}
    \ac{2}{u}_1                              & = - (B[\ac{1}{\eta}]\nabla \ac{1}{\phi})_1 + \frac{c}{d}\ac{2}{\eta} + \frac{1}{c}h'(\ac{0}{q})\ac{2}{q} + \partial_1 \ac{2}{\phi} + \ac{2}{U} \\
    \ac{1}{\bm{u}} \cdot \nabla \ac{1}{\tau} & = \frac{1}{c}(B[\ac{1}{\eta}]\nabla \ac{1}{\phi})_1 - \frac{1}{c^2}\abs{\nabla \ac{1}{\phi}}^2,
\end{align*}
in particular leading to
\begin{equation}
    \label{eq:q2}
    \begin{aligned}
        Q'(c)\ac{2}{q} & = \int_0^{\lambda_1} \abs*{\nabla(\partial_1 \ac{1}{\eta}\mf{f}_{\abs{\bm{\kappa}}})}^2\,dx_1 - \ac{0}{q}\ac{2}{U}                                                                                                                                                   \\
                       & =\frac{\pi \kappa_1}{2}\parn[\Big]{\abs{\bm{\kappa}}^2\mf{f}_{\abs{\bm{\kappa}}}^2 + (\mf{f}'_{\abs{\bm{\kappa}}})^2 + \brak[\big]{(\kappa_1^2 - \kappa_2^2)\mf{f}_{\abs{\bm{\kappa}}}^2 + (\mf{f}'_{\abs{\bm{\kappa}}})^2}\cos(2\kappa_2 x_2)} - \ac{0}{q}\ac{2}{U}
    \end{aligned}
\end{equation}
by integration. This can be solved for $\ac{2}{q}$, since we have assumed that $Q'(c) \neq 0$ for all $c \in W$.

We can now use~\eqref{eq:integral_cond_flattened} at the second order to determine $\ac{2}{U}$, and therefore $\ac{2}{q}$, because the terms with $\ac{2}{q}$ cancel. It demands that
\[
    0 = \fint_{\Omega_0^0} \parn[\Big]{\ac{2}{u}_1 - \frac{c}{d}\ac{2}{\eta}}\,d\bm{x} = \fint_{\Omega_0^0} \parn[\bigg]{\frac{1}{c}h'(\ac{0}{q})\ac{2}{q}-(B[\ac{1}{\eta}]\ac{1}{\phi})_1} + \ac{2}{U}=\frac{c}{Q'(c)}\parn[\bigg]{\ac{2}{U}-\frac{c\kappa_1^2}{4d}\mo{\mf{f}}_{\abs{\bm{\kappa}}}},
\]
or that
\[
    \ac{2}{U} = \frac{c\kappa_1^2}{4d}\mo{\mf{f}}_{\abs{\bm{\kappa}}}.
\]

For~\eqref{eq:flattened_divergence} and \eqref{eq:flattened_kinematic} to be satisfied, we must have that
\begin{alignat*}{3}
    \Delta \ac{2}{\phi}     & = \nabla \cdot \parn[\big]{B[\ac{1}{\eta}]\nabla \ac{1}{\phi}}           & \qquad & \text{in } \Omega_0^0,          \\
    \partial_3 \ac{2}{\phi} & = (B[\ac{1}{\eta}]\nabla \ac{1}{\phi})_3 + c\partial_1 \pi[\ac{2}{\eta}] &        & \text{on } \partial \Omega_0^0,
\end{alignat*}
which may be written as
\begin{alignat*}{3}
    \Delta \parn[\big]{\ac{2}{\phi}- \partial_3 \ac{1}{\phi} \pi[\ac{1}{\eta}]}     & =0                                                                                       & \qquad & \text{in }\Omega_0^0,          \\
    \partial_3 \parn[\big]{\ac{2}{\phi}- \partial_3 \ac{1}{\phi} \pi[\ac{1}{\eta}]} & =c\partial_1\pi\brak[\big]{\ac{2}{\eta} + \mo{\mf{f}}_{\abs{\bm{\kappa}}}\ac{1}{\alpha}} &        & \text{on } \partial\Omega_0^0,
\end{alignat*}
where
\[
    \ac{1}{\alpha} \ceq \frac{1}{2}\parn[\big]{(\partial_2 \ac{1}{\eta})^2-(\kappa_1^2 + \abs{\bm{\kappa}}^2)\ac{1}{\eta}^2},
\]
so that
\[
    \ac{2}{\phi} = \partial_3 \ac{1}{\phi} \pi[\ac{1}{\eta}] + c \partial_1 \chi\brak[\big]{\ac{2}{\eta} + \mo{\mf{f}}_{\abs{\bm{\kappa}}}\ac{1}{\alpha}}.
\]

Finally, the second order dynamic boundary condition~\eqref{eq:dynamic_bdry_flattened} becomes
\begin{equation}
    \label{eq:dynamic_bdry_flattened_second_order}
    \frac{1}{2}\abs{\ac{1}{\bm{u}}}^2 + c\ac{2}{u}_1 + \frac{c^2}{2}(\partial_1 \ac{1}{\eta})^2 - 2c \frac{\ac{1}{\eta}}{d}\ac{1}{u}_1 + \parn[\Big]{\frac{3\ac{1}{\eta}^2}{2d^2}-\frac{\ac{2}{\eta}}{d}}c^2 + g\ac{2}{\eta}-\sigma\Delta \ac{2}{\eta}-h'(\ac{0}{q})\ac{2}{q}=0
\end{equation}
at $x_3 = 0$, after using that $\ac{0}{u} =c\bm{e}_1$. Now
\begin{align*}
    c\ac{2}{u}_1 - \frac{c^2}{d}\ac{2}{\eta} - h'(\ac{0}{q})\ac{2}{q} & =-c(B[\ac{1}{\eta}]\nabla \ac{1}{\phi})_1 + c\partial_1 \ac{2}{\phi} + c\ac{2}{U}                                                                                                       \\
                                                                      & =\frac{c}{d}\ac{1}{\eta}\partial_1 \ac{1}{\phi}-c^2\kappa_1^2\ac{1}{\eta}^2+ c\ac{2}{U}+c^2 \partial_1^2 \chi\brak[\big]{\ac{2}{\eta} + \mo{\mf{f}}_{\abs{\bm{\kappa}}}\ac{1}{\alpha}},
\end{align*}
at $x_3 = 0$, while
\[
    \frac{1}{2}\abs{\ac{1}{\bm{u}}}^2 + \frac{c^2}{2}(\partial_1 \ac{1}{\eta})^2 - 2c\frac{\ac{1}{\eta}}{d}\ac{1}{u}_1 + \frac{3c^2}{2d^2}\ac{1}{\eta}^2 = - \frac{c}{d}\ac{1}{\eta}\partial_1 \ac{1}{\phi} + \frac{1}{2}\abs{\nabla \ac{1}{\phi}}^2.
\]
Thus~\eqref{eq:dynamic_bdry_flattened_second_order} simplifies to
\[
    \frac{1}{2}\abs{\nabla\ac{1}{\phi}}^2 -c^2\kappa_1^2 \ac{1}{\eta}^2 + c\ac{2}{U}+c^2 \partial_1^2 \chi\brak[\big]{\ac{2}{\eta} + \mo{\mf{f}}_{\abs{\bm{\kappa}}}\ac{1}{\alpha}} +g\ac{2}{\eta}-\sigma \Delta \ac{2}{\eta} =0,
\]
where
\begin{align*}
    \ac{1}{\alpha}                         & =-\frac{1}{4}(1+\cos(2\kappa_1 x_1))(\kappa_1^2 + \abs{\bm{\kappa}}^2\cos(2\kappa_2 x_2)),                                                                     \\
    \frac{1}{2}\abs{\nabla \ac{1}{\phi}}^2 & =
    \begin{multlined}[t]
        \frac{c^2\kappa_1^2\mo{\mf{f}}_{\abs{\bm{\kappa}}}^2}{8}\parn[\Big]{\abs{\bm{\kappa}}^2\parn[\big]{1+\cos(2\kappa_1x_1)\cos(2\kappa_2 x_2)} + (\kappa_1^2-\kappa_2^2)\parn[\big]{\cos(2\kappa_1 x_1)+\cos(2\kappa_2 x_1)}}\\
        +\frac{c^2\kappa_1^2}{8}\parn[\big]{1-\cos(2\kappa_1 x_1)+\cos(2\kappa_2 x_2)-\cos(2\kappa_1 x_1)\cos(2\kappa_2 x_2)},
    \end{multlined} \\
    c^2\kappa_1^2 \ac{1}{\eta}^2           & = \frac{c^2\kappa_1^2}{4}\parn[\big]{1+\cos(2\kappa_1 x_1) + \cos(2\kappa_2 x_2)+\cos(2\kappa_1 x_1)\cos(2\kappa_2 x_2)},
\end{align*}
so that
\[
    \ac{2}{\eta} = \frac{c^2\kappa_1^2}{8}\parn[\Big]{\frac{a_{\bm{0}}}{\ell_{\bm{0}}(c) }+\frac{a_{2\kappa_1\bm{e}_1}}{\ell_{2\kappa_1\bm{e}_1}(c) }\cos(2\kappa_1 x_1)+\frac{a_{2\kappa_2\bm{e}_2}}{\ell_{2\kappa_2\bm{e}_2}(c) }\cos(2\kappa_2 x_2)+\frac{a_{2\bm{\kappa}}}{\ell_{2\bm{\kappa}}(c) }\cos(2\kappa_1 x_1)\cos(2\kappa_2 x_2)}
\]
with
\begin{align*}
    a_{\bm{0}}            & = 1 - \mo{\mf{f}}_{\abs{\bm{\kappa}}}^2\abs{\bm{\kappa}}^2 - \frac{2}{d}\mo{\mf{f}}_{\abs{\bm{\kappa}}}                                                                                                   \\
    a_{2\kappa_1\bm{e}_1} & =3 - \mo{\mf{f}}_{\abs{\bm{\kappa}}}^2(\kappa_1^2-\kappa_2^2) - 8\kappa_1^2 \mo{\mf{f}}_{\abs{\bm{\kappa}}}\mo{\mf{f}}_{2\kappa_1}                                                                        \\
    a_{2\kappa_2\bm{e}_2} & =1-\mo{\mf{f}}_{\abs{\bm{\kappa}}}^2(\kappa_1^2-\kappa_2^2)                                                                                                                                               \\
    a_{2\bm{\kappa}}      & =3-\mo{\mf{f}}_{\abs{\bm{\kappa}}}^2\abs{\bm{\kappa}}^2 - 8\abs{\bm{\kappa}}^2 \mo{\mf{f}}_{\abs{\bm{\kappa}}} \mo{\mf{f}}_{2\abs{\bm{\kappa}}}=1-3\mo{\mf{f}}_{\abs{\bm{\kappa}}}^2 \abs{\bm{\kappa}}^2.
\end{align*}
\subsubsection*{Expansion of c}
By taking the projection of the dynamic boundary condition onto the space spanned by $\ac{1}{\eta}$, we obtain the equation
\[
    \ell_{\bm{\kappa}}(c)\ac{1}{\eta} t+o(t^2)=0,
\]
meaning that
\begin{equation}
    \label{eq:c_expansion}
    c=c^*+o(t),
\end{equation}
allowing us to prove the following result.

\begin{proposition}
    \label{prop:nonzero_vorticity}
    For the solutions to~\eqref{eq:kinematic}, \eqref{eq:unflattened_problem} obtained in \Cref{thm:Main_result} we have
    \[
        \bm{\omega}[c(t),t] = \frac{h'(\ac{0}{q}[c^*])}{c^*} \parn[\big]{\partial_3 \ac{2}{q}[c^*]\bm{e}_2 - \partial_2 \ac{2}{q}[c^*]\bm{e}_3}t^2 + o(t^2)
    \]
    as $t \to 0$, where $\ac{2}{q}$ is given in~\eqref{eq:q2}. In particular, they are rotational for all $0 < \abs{t} \ll 1$ if $h'(\ac{0}{q}[c^*]) \neq 0$.
\end{proposition}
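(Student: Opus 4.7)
The plan is to exploit the identity $\bm{\omega} = h'(q)\nabla q \times \nabla \tau$ coming from the Clebsch-type representation \eqref{eq:unflattened_euler I}; equivalently, from \eqref{eq:flattened_euler} together with \eqref{eq:curl_flattening}, noting that the flattening map $\Pi$ differs from the identity by $O(t)$. Expanding the vorticity therefore reduces to expanding the product $h'(q)\nabla q \times \nabla\tau$ in powers of $t$.

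Substituting the expansions \eqref{eq:sol_expansion_1} together with \eqref{eq:c_expansion}, and using the two key facts established earlier in this section, namely that $\ac{0}{q}[c]$ is spatially constant and that $\ac{1}{q}[c] = 0$, one gets $\nabla q = t^2\nabla \ac{2}{q}[c^*] + o(t^2)$, $\nabla \tau = \frac{1}{c^*}\bm{e}_1 + O(t)$, and $h'(q) = h'(\ac{0}{q}[c^*]) + O(t)$ in the relevant Hölder norms. Multiplying out, computing the elementary cross product $\nabla \ac{2}{q}[c^*] \times \bm{e}_1 = \partial_3 \ac{2}{q}[c^*]\bm{e}_2 - \partial_2 \ac{2}{q}[c^*]\bm{e}_3$, and collecting the remaining terms into $o(t^2)$ yields the claimed formula. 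The passage from the flattened to the unflattened vorticity is harmless at leading order, since $\Pi = \id + O(t)$ while the expression in question is already $O(t^2)$.

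For the rotationality assertion, suppose $h'(\ac{0}{q}[c^*]) \neq 0$; it then suffices to verify that $\partial_3 \ac{2}{q}[c^*]\bm{e}_2 - \partial_2 \ac{2}{q}[c^*]\bm{e}_3$ is not identically zero on $\Omega^0$. The natural route is to differentiate \eqref{eq:q2} with respect to $x_3$ (noting that $\ac{2}{U}$ is a spatial constant), use the ODE $\mf{f}''_k = k^2 \mf{f}_k$ to simplify, and observe that the result reduces to a positive constant multiple of $\mf{f}_{\abs{\bm{\kappa}}}\mf{f}'_{\abs{\bm{\kappa}}}(1+\cos(2\kappa_2 x_2))$, which is strictly positive on a non-empty open subset of $\Omega^0$ because $\mf{f}'_k(x_3) > 0$ for $-d < x_3 \leq 0$. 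The leading $t^2$-coefficient of $\bm{\omega}[c(t),t]$ therefore does not vanish identically on the fluid domain, and the $o(t^2)$ remainder cannot swallow it for $0 < \abs{t} \ll 1$. The main point requiring care is simply the bookkeeping that the $o(t^2)$ error really is $o(t^2)$ in a norm strong enough to dominate the pointwise $t^2$-leading term on an open set; this is afforded by \Cref{cor:t_c_dependence_of_solutions} and the regularity statement in \Cref{thm:Main_result}.
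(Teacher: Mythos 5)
Your proposal is correct and follows essentially the same route as the paper: identify $\bm{\omega}$ with $h'(q)\nabla q\times\nabla\tau$ via \eqref{eq:flattened_euler} and \eqref{eq:curl_flattening}, insert the expansions \eqref{eq:sol_expansion_1} and \eqref{eq:c_expansion} using $\ac{1}{q}=0$, and read off the leading $t^2$ term from the cross product with $\nabla\ac{0}{\tau}[c^*]=\bm{e}_1/c^*$; your extra verification that $\partial_3\ac{2}{q}[c^*]$ is not identically zero is a welcome addition (the paper only records this later, in \eqref{eq:q2_x3_derivative}). One small slip: differentiating \eqref{eq:q2} with $\mf{f}_k''=k^2\mf{f}_k$ gives a positive multiple of $\mf{f}_{\abs{\bm{\kappa}}}\mf{f}'_{\abs{\bm{\kappa}}}\bigl(\abs{\bm{\kappa}}^2+\kappa_1^2\cos(2\kappa_2x_2)\bigr)$, not of $\mf{f}_{\abs{\bm{\kappa}}}\mf{f}'_{\abs{\bm{\kappa}}}\bigl(1+\cos(2\kappa_2x_2)\bigr)$, but this factor is bounded below by $\kappa_2^2>0$, so your conclusion stands unchanged.
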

\begin{proof}
    The right-hand side of~\eqref{eq:flattened_euler} is equal to $\bm{\omega}$ by~\eqref{eq:curl_flattening}. We begin by considering the factor $\nabla q[c,t]$ in isolation. The expansion in~\eqref{eq:sol_expansion_1} gives
    \begin{align*}
        \nabla q[c,t]    & =t^2\nabla \ac{2}{q}[c]+o(t^2),
        \shortintertext{and thus}
        \nabla q[c(t),t] & =t^2\nabla \ac{2}{q}[c^*]+o(t^2)
    \end{align*}
    by~\eqref{eq:c_expansion}. It immediately follows that
    \begin{align*}
        \bm{\omega}[c(t),t] & = h'(q[c,t])\nabla q[c,t]\times \nabla \tau[c,t]                                  \\
                            & =t^2 h'(\ac{0}{q}[c^*])\nabla \ac{2}{q}[c^*]\times \nabla\ac{0}{\tau}[c^*]+o(t^2)
    \end{align*}
    as $t \to 0$ by using the expansions in~\eqref{eq:sol_expansion_1}. The result now follows upon using the expressions for $\ac{0}{\tau}$, $\ac{0}{q}$ from~\eqref{eq:trivial_time_functions}, and $\ac{2}{q}$ from~\eqref{eq:q2}.
\end{proof}

\subsection{Bernoulli surfaces}
The level surfaces of the Bernoulli function $H$, known as Bernoulli surfaces, are interesting in view of Arnold's structure theorem~\cite[Ch. II, §1]{Arnold98Topological}. This theorem roughly says that the Bernoulli surfaces are either invariant tori or diffeomorphic to the annulus $\R \times S^1$. The latter happens when a surface touches the boundary. In our case, everything has to be understood by taking the quotient $\mb R^2/\Lambda$. In general, the streamlines on the annuli are closed (or rather periodic in our setting), while those on the tori are either all closed or dense in the surface. However, the latter cannot happen in our case due to the symmetry assumptions, which guarantee periodicity of the streamlines.

The main conclusion of this section is that for the fluid flows constructed in this paper, both tori and annuli appear as Bernoulli surfaces. This is different from two-dimensional water waves with vorticity, where the surface and bottom are always Bernoulli surfaces, and all other Bernoulli surfaces are tori strictly contained in the interior of the fluid domain.

In this section, we always work under the assumption $h'(\ac{0}{q}[c^*]) \neq 0$. Due to the relationship between $q$ and the Bernoulli function $H$, the Bernoulli surfaces are then also level surfaces of $q$. By combining~\eqref{eq:sol_expansion_1} and \eqref{eq:c_expansion}, we see that $q$ can be expanded with respect to $t$ as
\begin{equation}
    \label{eq:q_expansion}
    q[c(t),t](\bm{x})=\ac{0}{q}[c(t)]+\ac{2}{q}[c^*](\bm{x}) t^2 + o[t^2](\bm{x})
\end{equation}
in $\mb{Y}_+^{\mf{s}-3}(\ol{\Omega^0})$ as $t \to 0$, where we recall that $\mf{s}>4$. In view of~\eqref{eq:q_expansion}, we expect that the level surfaces of $q[c(t),t]$ are closely related to those of $\ac{2}{q}[c^*]$, and should therefore investigate these. To facilitate this, we will treat $\ac{2}{q}[c^*]$ as extended to the larger domain $\ol{\Omega^\infty}$, and define
\[
    K_{x_3}^\pm = \abs{\bm{\kappa}}^2\mf{f}_{\abs{\bm{\kappa}}}^2 + (\mf{f}'_{\abs{\bm{\kappa}}})^2 \pm \brak[\big]{(\kappa_1^2 - \kappa_2^2)\mf{f}_{\abs{\bm{\kappa}}}^2 + (\mf{f}'_{\abs{\bm{\kappa}}})^2}
\]
for $x_3 \in [-d,0]$.
\begin{lemma}
    \label{lem:q2_level_curves}
    The function $\ac{2}{q}[c^*]$ is increasing with respect to $x_3$, and its image on $\ol{\Omega^0}$ is the interval $\mf{q}(\ol{I})$, where $I = (\ul{K},\ol{K})$,
    \[
        \mf{q}(K) \ceq \frac{\pi \kappa_1}{2Q'(c^*)}\parn[\bigg]{K - \frac{\mo{\mf{f}}_{\abs{\bm{\kappa}}}}{d}}, \quad \ul{K} \ceq \min(K^+_{-d},K^-_{-d}), \quad \text{and} \quad \ol{K} \ceq \max(K^+_0,K^-_0).
    \]
    For each $K \in I$, the level surface $\ac{2}{q}[c^*]^{-1}(\mf{q}(K))$ is the graph of a smooth function $\map{\psi_0^K}{\mc{D}_0^K}{[-d,\infty)}$, only depending on $x_2$, where the domain
    \[
        \mc{D}_0^K \ceq \brac[\big]{\bm{x}' : (\bm{x}',x_3) \in \ac{2}{q}[c^*]^{-1}(\mf{q}(K))}
    \]
    is nondecreasing in $K$. Also, if $K_1 < K_2$, then $\psi_0^{K_1} < \psi_0^{K_2}$ on $\mc{D}_0^{K_1}$.

    There always exist annuli, in the sense that $\ac{2}{q}[c^*]^{-1}(\mf{q}(K)) \not\subset \Omega^0$ for some $K \in I$. Moreover, tori exist precisely when
    \begin{equation}
        \label{eq:tori_condition_kappa}
        \frac{\abs{\bm{\kappa}}^2}{1+\cosh(\abs{\bm{\kappa}}d)^2} < \kappa_2^2 < \frac{\abs{\bm{\kappa}}^2\cosh(2\abs{\bm{\kappa}}d)}{1+\cosh(\abs{\bm{\kappa}}d)^2},
    \end{equation}
    where the final inequality is trivially satisfied when $\abs{\bm{\kappa}}d \geq \log(1+\sqrt{2})$.
\end{lemma}
\begin{proof}
    The proof of the first part is essentially immediate from~\eqref{eq:q2}, after noting that
    \begin{equation}
        \label{eq:q2_x3_derivative}
        \frac{Q'(c^*)}{\pi \kappa_1} \partial_3 \ac{2}{q}[c^*] = (\mf{f}_{\abs{\bm{\kappa}}}^2)'(\abs{\bm{\kappa}}^2 + \kappa_1^2 \cos(2\kappa_2x_2))
    \end{equation}
    is strictly positive on $\Omega^\infty$.

    For the final part, we see that $K$ corresponds to a torus exactly when
    \[
        K \in \mo{I} \ceq \parn[\big]{\max(K_{-d}^+,K_{-d}^-),\min(K_0^+,K_0^-)},
    \]
    whence tori exist if and only $\mo{I} \neq \varnothing$, or equivalently
    \begin{equation}
        \label{eq:tori_condition}
        \min(\kappa_2^2 \cosh(\abs{\bm{\kappa}d})^2,\kappa_1^2 \cosh(\abs{\bm{\kappa}}d)^2 + \abs{\bm{\kappa}}^2 \sinh(\abs{\bm{\kappa}}d)^2) > \max(\kappa_1^2,\kappa_2^2),
    \end{equation}
    is satisfied. In order to demonstrate that this in turn is equivalent to~\eqref{eq:tori_condition_kappa}, we treat three regimes separately:

    If $\kappa_2^2 < \kappa_1^2$, then~\eqref{eq:tori_condition} reads $\kappa_1^2 < \kappa_2^2 \cosh(\abs{\bm{\kappa}} d)^2$. Next, if $\kappa_1^2 \leq \kappa_2^2 \leq \kappa_1^2 \cosh(2\abs{\bm{\kappa}}d)$, then it is trivially satisfied. Finally, if $\kappa_1^2 \cosh(2\abs{\bm{\kappa}}d) < \kappa_2^2$ we find that~\eqref{eq:tori_condition} becomes $(2\kappa_1^2 + \kappa_2^2)\cosh(2\abs{\bm{\kappa}}d) > 3\kappa_2^2$. In summary, the condition is satisfied if and only if
    \[
        \frac{\kappa_1^2}{\cosh(\abs{\bm{\kappa}}d)^2} < \kappa_2^2 < \frac{2\kappa_1^2 + \kappa_2^2}{3}\cosh(2\abs{\bm{\kappa}}d),
    \]
    which turns into~\eqref{eq:tori_condition_kappa} after eliminating $\kappa_1$.

    Lastly, we claim that $\mo{I} \subsetneq I$, and therefore that there are always annuli. Indeed, $K_{x_3}^+ = K_{x_3}^-$ if and only if $x_3 $ is a root of
    \[
        (\kappa_1^2 - \kappa_2^2)\mf{f}_{\abs{\bm{\kappa}}}^2 + (\mf{f}'_{\abs{\bm{\kappa}}})^2 = \frac{\kappa_1^2 \cosh(2\abs{\bm{\kappa}}(x_3+d))-\kappa_2^2}{\abs{\bm{\kappa}}^2\sinh(\abs{\bm{\kappa}}d)^2},
    \]
    and there is clearly at most one such root on $[-d,0]$.
\end{proof}

\begin{figure}
    \centering
    \begin{subfigure}[t]{0.5\textwidth}
        \tikzsetnextfilename{contours_annuli}
        \begin{tikzpicture}
            \begin{axis}[xmin=-pi/(ln(1+sqrt(2))/2),xmax=pi/(ln(1+sqrt(2))/2),xtick={-pi/(ln(1+sqrt(2))/2),0,pi/(ln(1+sqrt(2))/2)},xticklabels={$-\pi/\kappa_2$,$0$,$\pi/\kappa_2$},ymin=-1,ymax=0,ytick=\empty,width=\textwidth,restrict y to domain=-1:0]
                \addplot[contour prepared, contour prepared format=matlab, contour/labels=false, contour/draw color=black] table {contours_annuli.dat};
            \end{axis}
        \end{tikzpicture}
        \subcaption{When $\kappa_2 = \abs{\bm{\kappa}}/2$ there are only annuli.}
    \end{subfigure}%
    \begin{subfigure}[t]{0.5\textwidth}
        \tikzsetnextfilename{contours_tori}
        \begin{tikzpicture}
            \begin{axis}[xmin=-pi/(ln(1+sqrt(2))/sqrt(2)),xmax=pi/(ln(1+sqrt(2))/sqrt(2)),xtick={-pi/(ln(1+sqrt(2))/sqrt(2)),0,pi/(ln(1+sqrt(2))/sqrt(2))},xticklabels={$-\pi/\kappa_2$,$0$,$\pi/\kappa_2$},ymin=-1,ymax=0,ytick=\empty,width=\textwidth,restrict y to domain=-1:0]
                \addplot[contour prepared, contour prepared format=matlab, contour/labels=false, contour/draw color=black] table {contours_tori.dat};
            \end{axis}
        \end{tikzpicture}
        \subcaption{When $\kappa_2 = \abs{\bm{\kappa}}/\sqrt{2}$ there are both annuli and tori.}
    \end{subfigure}
    \caption{Level curves of $\protect\ac{2}{q}[c^*]$ when $\abs{\bm{\kappa}} = \log(1+\sqrt{2})$ and $d=1$. In this situation~\eqref{eq:tori_condition_kappa} reads $\abs{\bm{\kappa}}^2 / 3 < \kappa_2^2 < \abs{\bm{\kappa}}^2$}
    \label{fig:bernoulli_surfaces}
\end{figure}

\Cref{fig:bernoulli_surfaces} illustrates \Cref{lem:q2_level_curves} for specific wave numbers. We will now use~\eqref{eq:q_expansion} and \Cref{lem:q2_level_curves} to describe the Bernoulli surface of $q$ for sufficiently small $t$. There is a slight complication related to the flattening of the domain, which we will briefly discuss in \Cref{rem:flattened_bernoulli_surface}.

\begin{proposition}
    \label{prop:bernoullisurf}
    For every $\epsilon > 0$ and $K \in I$, the Bernoulli surface
    \begin{equation}
        \label{eq:bernoulli_surface_defining}
        q[c[t],t] = \ac{0}{q}[c(t)] + \mf{q}(K) t^2
    \end{equation}
    on $x_3 \geq -d + \epsilon$ is the graph of a function $\map{\psi_t^K}{\mc{D}_t^K}{[-d+\epsilon,0]}$ for all sufficiently small $t \neq 0$. Moreover
    \[
        \ol{\liminf_{t \to 0}{\mc{D}_t^K}}\ceq (\psi_0^K)^{-1}([-d+\epsilon,0]),
    \]
    and
    \[
        \lim_{t \to 0} \psi_t^K = \psi_0^K
    \]
    locally uniformly on the interior of $ \liminf_{t \to 0}{\mc{D}_t^K}$. If $K \notin \ol{I}$, then the Bernoulli surface is empty for sufficiently small $t$.

    The final part of \Cref{lem:q2_level_curves} carries over: If~\eqref{eq:tori_condition_kappa} is satisfied, then there are tori for sufficiently small $t$. If it is not satisfied, with the possible exception of equality, then there are no tori for sufficiently small $t$. There are always annuli.
\end{proposition}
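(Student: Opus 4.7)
The plan is to combine the expansion \eqref{eq:q_expansion} with the monotonicity from \Cref{lem:q2_level_curves} via an implicit-function argument. Write the defining equation \eqref{eq:bernoulli_surface_defining} in the form
\[
\Phi_t(\bm{x}) \coloneqq \bigl(\ac{2}{q}[c^*](\bm{x}) - \mf{q}(K)\bigr) + t^{-2} r_t(\bm{x}) = 0,
\]
where $r_t$ is the $o(t^2)$ remainder in $\mb{Y}_+^{\mf{s}-3}(\ol{\Omega^0})$; since $\mf{s}-3 > 1$, one has $t^{-2} r_t \to 0$ in $C^1$. First I would establish the graph property on the slab $\Omega_\epsilon \coloneqq \Omega^0 \cap \{x_3 \geq -d+\epsilon\}$: the identity \eqref{eq:q2_x3_derivative} combined with the elementary bound $\abs{\bm{\kappa}}^2 + \kappa_1^2 \cos(2\kappa_2 x_2) \geq \kappa_2^2 > 0$ gives a uniform positive lower bound on $\partial_3 \ac{2}{q}[c^*]$ over $\Omega_\epsilon$, so $\partial_3 \Phi_t > 0$ there for all sufficiently small $t$. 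Hence $x_3 \mapsto \Phi_t(\bm{x}', x_3)$ is strictly increasing on $[-d+\epsilon, 0]$, producing a unique root $\psi_t^K(\bm{x}')$ whenever the boundary values of $\Phi_t$ straddle zero; the implicit function theorem then supplies the natural domain $\mc{D}_t^K$ together with the regularity of $\psi_t^K$.

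Next I would analyse $\mc{D}_t^K$ and the convergence. The membership $\bm{x}' \in \mc{D}_t^K$ is equivalent to the straddle condition $\Phi_t(\bm{x}', -d+\epsilon) \leq 0 \leq \Phi_t(\bm{x}', 0)$, which, being a $C^0$-small perturbation of the corresponding condition for the unperturbed $\ac{2}{q}[c^*] - \mf{q}(K)$, yields the inclusion $\mathrm{int}\bigl((\psi_0^K)^{-1}([-d+\epsilon, 0])\bigr) \subseteq \liminf_{t\to 0} \mc{D}_t^K$; the reverse inclusion at the level of closures is immediate since points in the liminf remain in the closed set $(\psi_0^K)^{-1}([-d+\epsilon,0])$ by passing to the limit. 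Locally uniform convergence $\psi_t^K \to \psi_0^K$ on the interior then follows from the implicit function theorem with uniform estimates in $\bm{x}'$. When $K \notin \ol{I}$, one exploits instead the uniform bound $\abs{\ac{2}{q}[c^*] - \mf{q}(K)} \geq \delta > 0$ on $\ol{\Omega^0}$, which dominates the $C^0$-small perturbation and forces the Bernoulli surface to be empty.

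For the tori/annuli dichotomy, a Bernoulli surface is a torus precisely when $\mc{D}_t^K = \R^2/\Lambda$ and $\psi_t^K$ takes values strictly inside $(-d, 0)$. If \eqref{eq:tori_condition_kappa} holds, \Cref{lem:q2_level_curves} supplies $K \in \mo{I}$ for which $\psi_0^K$ is a torus bounded away from $\{-d, 0\}$; choosing $\epsilon$ smaller than this distance and invoking the uniform convergence, $\psi_t^K$ remains such a torus for small $t$. Conversely, if \eqref{eq:tori_condition_kappa} fails strictly, then every $K \in I$ yields an unperturbed annulus, so $(\psi_0^K)^{-1}([-d+\epsilon, 0]) \neq \R^2/\Lambda$ for small $\epsilon$, and the liminf characterisation forces the perturbed surface to also be an annulus. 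The perennial existence of annuli follows from the same reasoning applied for $K$ near the endpoints of $I$, using $\mo{I} \subsetneq I$. The main subtlety, and the reason the hypothesis $\mf{s} > 4$ is essential, is ensuring that $r_t$ is small in a norm strong enough to preserve monotonicity in $x_3$ uniformly up to the boundary of $\Omega_\epsilon$.
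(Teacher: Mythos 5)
Your proposal is correct and follows essentially the same route as the paper: divide \eqref{eq:bernoulli_surface_defining} by $t^2$, use \eqref{eq:q2_x3_derivative} to get a uniform lower bound on $\partial_3 \ac{2}{q}[c^*]$ over the slab $x_3 \ge -d+\epsilon$ (hence monotonicity of the perturbed $q$ there), apply the implicit function theorem for the graph and the convergence, and handle tori/annuli by the same perturbation of the unperturbed picture from \Cref{lem:q2_level_curves}. You simply spell out the straddle condition defining $\mc{D}_t^K$ and the $\liminf$ identity in more detail than the paper does.
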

\begin{remark}
    \label{rem:flattened_bernoulli_surface}
    In fact, each $ \psi_t^K $ describes a Bernoulli surface in the flattened variables. However, due to the way scalar functions like $q$ transform, we find that the corresponding Bernoulli surface in physical coordinates is given by
    \[
        \hat{\psi}_t^K = \psi_t^K + \eta[t]\parn[\bigg]{1 + \frac{\psi_t^K}{d}}
    \]
    instead. Because $\eta[t] = o[t]$, this means that $ \psi^K_0 $ also approximates the Bernoulli surfaces in physical coordinates.
\end{remark}
\begin{proof}
    From~\eqref{eq:q2_x3_derivative} we not only get that $\partial_3 \ac{2}{q}[c^*]$ is positive on $\Omega^\infty$, but that it is bounded away from zero when $x_3 \geq -d + \epsilon$. Consequently the same is true for $\partial_3 q[c(t),t]$ for sufficiently small $t \neq 0$ by~\eqref{eq:q_expansion}. The implicit function theorem now establishes the existence of $\psi_t^K$, with regularity in both space and time. Furthermore,~\eqref{eq:bernoulli_surface_defining} may be written
    \[
        \ac{2}{q}[c^*](\bm{x}) + o[1](\bm{x}) = \mf{q}(K),
    \]
    which yields the convergence as $t \to 0$.

    The final part holds because we can always choose $\epsilon$ small enough for any given torus for $\ac{2}{q}[c^*]$ to be entirely contained in $x_3 \geq -d + \epsilon$. The exception for equality stems from the case of a degenerate annulus that can (potentially) become a torus when $t \neq 0$. The existence of annuli follows from the fact that $\ac{2}{q}[c^*]$ cannot be constant both on the bottom and the surface, and hence the same holds for $q[c(t),t]$ for sufficiently small $t$.
\end{proof}
\appendix
\section{Formal differentiability of compositions}
\label{Appendix:composition}
Let $(X_s)$, $(Y_s)$, and $(Z_s)$ be decreasing scales of Banach spaces, and fix an index $\mf{s} \in \R$. Suppose that $\mc{X}_{\mf{s}} \subset X_{\mf{s}}$ and $\mc{Y}_{\mf{s}} \subset Y_{\mf{s}}$ are subsets, on which we have maps
\[
    \mc{X}_{\mf{s}} \xrightarrow{y} \mc{Y}_{\mf{s}} \xrightarrow{z} Z_{\mf{s}},
\]
with formal derivatives
\begin{align*}
    D^j y\colon \mc{X}_{\mf{s}} & \to \mc{L}^{ j }\parn[\big]{X_{\mf{s}-p_4^j},Y_{\mf{s}-p_1^j}}, \\
    D^j z\colon \mc{Y}_{\mf{s}} & \to \mc{L}^{ j }\parn[\big]{Y_{\mf{s}-q_4^j},Z_{\mf{s}-q_1^j}},
\end{align*}
for $j=1,2$. Furthermore, assume that these operators satisfy
\begin{align}
    \norm{D^j y[x]}_{\mc{L}^{ j }\parn[\big]{X_{s-p_4^j},Y_{s-p_1^j}}}                          & \lesssim 1,                             &                       & s_1^j< s \leq \mf{s},
    \label{eq:app_y_bdd}                                                                                                                                                                  \\
    \norm{D^j z[y]}_{\mc{L}^{ j }\parn[\big]{Y_{s-q_4^j},Z_{s-q_1^j}}}                          & \lesssim 1,                             &                       & t_1^j< s \leq \mf{s},
    \label{eq:app_z_bdd}                                                                                                                                                                  \\
    \norm{\Delta_{h} D^j y[x]}_{\mc{L}^j\parn[\big]{X_{s-p_6^j},Y_{s-p_2^j}}}                   & \lesssim\norm{h}_{X_{s-p_5^j}},
                                                                                                &                                         & s_2^j< s \leq \mf{s},
    \label{eq:app_y_lip}                                                                                                                                                                  \\
    \norm{\Delta_{k} D^j z[y]}_{\mc{L}^{ j}\parn[\big]{Y_{s-q_6^j},Z_{s-q_2^j}}}                & \lesssim \norm{ k}_{Y_{s-q_5^j}},
                                                                                                &                                         & t_2^j< s \leq \mf{s},
    \label{eq:app_z_lip}                                                                                                                                                                  \\
    \shortintertext{for $j=0,1$, and}
    \norm{ \Delta_{h} D^j y[x]-D^{j+1} y[x]h}_{\mc{L}^{j}\parn[\big]{X_{s-p_8^j},Y_{s-p_3^j}}}  & =o\parn[\big]{\norm{ h}_{X_{s-p_7^j}}},
                                                                                                &                                         & s_3^j< s \leq \mf{s},
    \label{eq:app_y_diff}                                                                                                                                                                 \\
    \norm{ \Delta_{k} D^j z[y]-D^{j+1} z[y]k}_{\mc{L}^{j }\parn[\big]{Y_{s-q_8^j},Z_{s-q_3^j}}} & =o\parn[\big]{ \norm{k}_{Y_{s-q_7^j}}},
                                                                                                &                                         & t_3^j< s \leq \mf{s},
    \label{eq:app_z_diff}
\end{align}
for $j=0$.
Here $p_i^j, q_i^j \ge 0$ and these properties are assumed to hold uniformly for all $x, x+ h\in \mc{X}_{\mf{s}}$ and $y,y+k\in \mc{Y}_{\mf{s}}$.

We denote the composition $\map{z \circ y}{\mc{X}_\mf{s}}{Z_\mf{s}}$ by $f$, and introduce its formal derivatives
\begin{equation}
    \label{eq:faa_di_bruno}
    D^jf[x](h_i)_{i \in I_j} \ceq \sum_{\pi \in \mc{P}_j} D^{\abs{\pi}} z[y[x]]\parn[\big]{D^{\abs{\beta}}y[x](h_i)_{i \in \beta}}_{\beta \in \pi}
\end{equation}
through Faà di Bruno's formula, whenever this definition makes sense. Here $I_j \ceq \brac{1,\ldots,j}$, and $\mc{P}_j$ is the set of partitions of $I_j$.
\begin{proposition}
    \label{Prop:composition}
    If $p_1^0=q_1^0=0$, then for all $x,x+h\in \mc{X}_{\mf{s}}$, the operators $D^j f$ satisfy:
    \begin{equation}
        \label{eq:app_f_bdd}
        \norm{ D^jf[x]}_{\mc{L}^j\parn[\big]{X_{s-r_4^j}, Z_{s-r_1^j}}}\lesssim 1,\qquad \varsigma_1^j< s\leq \mf{s},
    \end{equation}
    for $j = 0$ if
    \begin{align*}
        r_1^0         & =0,                                                \\
        \varsigma_1^0 & = t_1^0,                                           \\
        \shortintertext{and $j=1$ if $\delta^1 \ceq p_1^1-q_4^1$ and}
        r_1^1         & =q_1^1 + \delta_+^1,                               \\
        r_4^1         & = p_4^1 + \delta_-^1,                              \\
        \varsigma_1^1 & =\max\brac{t_1^1 + \delta_+^1,s_1^1 + \delta_-^1}.
    \end{align*}
    \begin{equation}
        \label{eq:app_f_lip}
        \norm{ \Delta_h D^j f[x]}_{\mc{L}^j\parn[\big]{X_{s-r_6^j},Z_{s-r_2^j}}}\lesssim \norm{ h}_{X_{s-r_5^j}},\qquad \varsigma_2^j<s\leq \mf{s},
    \end{equation}
    for $j=0$ if $\delta^2 \ceq p_2^0 - q_5^0$ and
    \begin{align*}
        r_2^0         & =q_2^0 + \delta^2_+                                                                                                                                         \\
        r_5^0         & = p_5^0 + \delta^2_-,                                                                                                                                       \\
        \varsigma_2^0 & =\max\brac{t_2^0 + \delta^2_+,s_2^0 + \delta_-^2}                                                                                                           \\
        \shortintertext{and $j=1$ if}
        r_2^1         & =\max\brac{q_2^1,\overbrace{q_2^1+p_2^0-q_5^1}^{\varkappa^1},\overbrace{q_2^1+p_1^1-q_6^1}^{\varkappa^2},q_1^1,\overbrace{q_1^1+p_2^1-q_4^1}^{\varkappa^3}} \\
        r_5^1         & = r_2^1 + \min\brac{p_5^0 -\varkappa^1,p_5^1-\varkappa^3},                                                                                                  \\
        r_6^1         & = r_2^1 + \min\brac{p_4^1 -\varkappa^2,p_6^1-\varkappa^3},                                                                                                  \\
        \varsigma_2^1 & =r_2^1+ \max\brac{t_2^1-q_2^1,s_2^0-\varkappa^1,s_1^1-\varkappa^2,t_1^1-q_1^1,s_2^1-\varkappa^3}.
    \end{align*}
    \begin{equation}
        \label{eq:app_f_diff}
        \norm{\Delta_{h}D^j f[x]-D^{j+1} f[x]h}_{\mc{L}\parn[\big]{X_{s-r_8^j},Z_{s-r_3^j}}}=o\parn[\big]{\norm{ h}_{X_{s-r_7^j}}},\qquad \varsigma_3^j< s\leq \mf{s},
    \end{equation}
    for $j=0$ if
    \begin{align*}
        r_3^0         & =\max\brac{q_3^0,\overbrace{q_3^0+p_2^0-q_7^0}^{\varkappa^4},q_1^1,\overbrace{q_1^1+p_3^0-q_4^1}^{\varkappa^5}}, \\
        r_7^0         & =r_3^0+\min\brac{p_5^0-\varkappa^4,p_7^0-\varkappa^5},                                                           \\
        \varsigma_3^0 & =r_3^0+\max\brac{t_3^0-q_3^0,s_2^0-\varkappa^4,t_1^1- q_1^1,s_3^0- \varkappa^5}.
    \end{align*}
\end{proposition}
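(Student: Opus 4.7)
\medskip

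\noindent\textbf{Proof plan.} The general strategy is to establish each of \eqref{eq:app_f_bdd}, \eqref{eq:app_f_lip}, \eqref{eq:app_f_diff} by directly computing the indicated quantity in terms of the corresponding quantities for $y$ and $z$, and then invoking the hypotheses \eqref{eq:app_y_bdd}--\eqref{eq:app_z_diff}. The $j=0$ boundedness is immediate from $f[x] = z[y[x]]$, \eqref{eq:app_z_bdd} at $j=0$, and the fact that $\mc{Y}_\mf{s}$ is mapped by $y$ into itself at the relevant level. For $j=1$, Faà di Bruno's formula \eqref{eq:faa_di_bruno} gives the chain rule $Df[x]h = Dz[y[x]]Dy[x]h$; to apply $Dz[y[x]]$ (defined on $Y_{s-q_4^1}$) to the output of $Dy[x]$ (landing in $Y_{s-p_1^1}$), one must align the two $Y$-scales, which forces the shift $\delta^1 = p_1^1 - q_4^1$ to be absorbed into whichever factor has the weaker assumption. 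Choosing the shift of indices by $\delta_+^1$ on the $z$-side and $\delta_-^1$ on the $y$-side yields exactly \eqref{eq:app_f_bdd} for $j=1$.

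For the Lipschitz property at $j=0$, I would write
\[
\Delta_h f[x] = z[y[x+h]] - z[y[x]] = \Delta_{y[x+h]-y[x]}z[y[x]],
\]
apply \eqref{eq:app_z_lip} at $j=0$, and then use \eqref{eq:app_y_lip} at $j=0$ to bound $\norm{y[x+h]-y[x]}$. The two estimates are in different $Y$-scales, and the mismatch $\delta^2 = p_2^0 - q_5^0$ is again split between the two sides, giving the expression for $r_2^0, r_5^0, \varsigma_2^0$. For the Lipschitz property at $j=1$, the key identity is the add-and-subtract decomposition
\[
\Delta_h Df[x] = \bigl(\Delta_{y[x+h]-y[x]}Dz[y[x]]\bigr)\circ Dy[x+h] + Dz[y[x]]\circ \Delta_h Dy[x],
\]
after which one applies \eqref{eq:app_z_lip} at $j=1$, \eqref{eq:app_y_bdd} at $j=1$, \eqref{eq:app_z_bdd} at $j=1$, and \eqref{eq:app_y_lip} at $j=1$ respectively. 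The indices $\varkappa^1, \varkappa^2, \varkappa^3$ track the three incompatibilities between the source/target scales of these four estimates, and $r_2^1, r_5^1, r_6^1, \varsigma_2^1$ are the smallest losses compatible with all of them.

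For differentiability at $j=0$, I would decompose
\[
\Delta_h f[x] - Df[x]h = \bigl(\Delta_{y[x+h]-y[x]}z[y[x]] - Dz[y[x]](y[x+h]-y[x])\bigr) + Dz[y[x]]\bigl(\Delta_h y[x] - Dy[x]h\bigr).
\]
The first bracket is $o(\norm{y[x+h]-y[x]}_{Y_{s-q_7^0}}) = o(\norm{h}_{X_{s-p_5^0-(q_7^0-p_2^0)}})$ by \eqref{eq:app_z_diff} at $j=0$ combined with \eqref{eq:app_y_lip} at $j=0$ (whence the loss $\varkappa^4 = q_3^0 + p_2^0 - q_7^0$). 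The second is $o(\norm{h}_{X_{s-p_7^0}})$ by \eqref{eq:app_y_diff} at $j=0$, once one has paid for $Dz[y[x]]$ via \eqref{eq:app_z_bdd} at $j=1$ (whence $\varkappa^5 = q_1^1 + p_3^0 - q_4^1$). Taking the worse of all four indices yields $r_3^0$, $r_7^0$, and $\varsigma_3^0$ as stated.

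The only real obstacle is keeping the bookkeeping consistent: each estimate \eqref{eq:app_y_bdd}--\eqref{eq:app_z_diff} holds on a different pair of scales, and when they are chained the mismatches accumulate into the explicit $\varkappa^i$ and $\delta^i$ shifts. There is no analytical difficulty beyond this — the identities above are algebraic, and the uniform constants in \eqref{eq:app_y_bdd}--\eqref{eq:app_z_diff} translate directly into uniform constants for $f$.
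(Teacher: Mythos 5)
Your proposal is correct and follows essentially the same route as the paper: the $j=0$ cases and the $j=0$ differentiability use the identical telescoping identities, and the indices $\delta^i$, $\varkappa^i$ arise from chaining exactly the same estimates in the same order. The only (immaterial) difference is in the Lipschitz estimate for $Df$, where you split $\Delta_h(Dz\circ Dy)$ with the increment of $Dz$ paired against $Dy[x+h]$ rather than against $Dy[x]$ as in the paper; since the hypotheses are uniform over $\mc{X}_{\mf{s}}$ and $\mc{Y}_{\mf{s}}$, both splittings invoke the same four estimates and produce the same constants.
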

\begin{proof}
    The first claim for $j=0$ follows immediately from~\eqref{eq:app_z_bdd} and the fact that $y$ maps $\mc{X}_{\mf{s}}$ into $\mc{Y}_{\mf{s}}$, while for $j=1$ we estimate
    \begin{align*}
        \norm{Dz[y[x]]Dy[x]h}_{Z_{s-r_1^1}} \qquad & \lesssimop_{\eqref{eq:app_z_bdd}}^{\mathclap{\substack{r_1^1\geq q_1^1          \\t_1^1+r_1^1-q_1^1 < s \leq \mf{s}}}}\qquad \norm{Dy[x]h}_{Y_{s-r_1^1+q_1^1-q_4^1}}\\
        \qquad                                     & \lesssimop_{\eqref{eq:app_y_bdd}}^{\mathclap{\substack{r_1^1\geq q_1^1+\delta^1 \\s_1^1+r_1^1-q_1^1-\delta^1 < s \leq \mf{s}}}}\qquad \norm{h}_{X_{s-r_1^1+q_1^1-p_4^1+\delta^1}}.
    \end{align*}

    For the second claim, we note that $\Delta_h f[x] = \Delta_{\Delta_h y[x]}z[y[x]]$, and therefore
    \begin{align*}
        \norm{\Delta_h f[x]}_{Z_{s-r_2^0}} \qquad & \lesssimop_{\eqref{eq:app_z_lip}}^{\mathclap{\substack{r_2^0\geq p_2^0          \\t_2^0+r_2^0-q_2^0 < s \leq \mf{s}}}}\qquad \norm{\Delta_h y[x]}_{Y_{s-r_2^0+q_2^0-q_5^0}}\\
        \qquad                                    & \lesssimop_{\eqref{eq:app_y_lip}}^{\mathclap{\substack{r_2^0\geq q_2^0+\delta^2 \\s_2^0+r_2^0- q_2^0-\delta^2 < s \leq \mf{s}}}}\qquad \norm{h}_{X_{s-r_2^0+q_2^0-p_5^0+\delta^2}},
    \end{align*}
    proving the claim for $j=0$. For $j=1$, we similarly note that
    \[
        \Delta_h Df[x] = Dz[y[x+h]]\Delta_h Dy[x] + \Delta_{\Delta_h y[x]}Dz[y[x]]Dy[x],
    \]
    where
    \begin{align*}
        \norm{Dz[y[x+h]]\Delta_h Dy[x]h_1}_{Z_{s-r_2^1}}\qquad & \lesssimop_{\eqref{eq:app_z_bdd}}^{\mathclap{\substack{r_2^1\geq q_2^1       \\t_1^1+r_2^1-q_1^1 < s \leq \mf{s}}}}\qquad \norm{\Delta_h Dy[x] h_1}_{Y_{s-r_2^1+q_1^1-q_4^1}}\\
        \qquad                                                 & \lesssimop_{\eqref{eq:app_y_lip}}^{\mathclap{\substack{r_2^1\geq \varkappa^3 \\s_2^1+r_2^1-\varkappa^3 < s \leq \mf{s}}}}\qquad \norm{h}_{X_{s-r_2^1-p_5^1+\varkappa^3}} \norm{h_1}_{X_{s-r_2^1-p_6^1+\varkappa^3}}
    \end{align*}
    and
    \begin{align*}
        \norm{\Delta_{\Delta_h y[x]}Dz[y[x]]Dy[x]h_1}_{Z_{s-r_2^1}}\qquad & \lesssimop_{\mathclap{\eqref{eq:app_z_lip}}}^{\mathclap{\substack{r_2^1\geq q_2^1                                                    \\t_2^1+r_2^1-q_2^1< s \leq \mf{s}}}}\qquad \norm{\Delta_hy[x]}_{Y_{s-r_2^1+q_2^1-q_5^1}}\norm{D y[x]h_1}_{Y_{s-r_2^1+q_2^1-q_6^1}}\\
        \qquad                                                            & \lesssimop_{\mathclap{\eqref{eq:app_y_bdd}, \eqref{eq:app_y_lip}}}^{\mathclap{\substack{r_2^1\geq \max\brac{\varkappa^1,\varkappa^2} \\r_2^1 + \max\brac{s_2^0 -\varkappa^1,s_1^1-\varkappa^2}< s \leq \mf{s}}}}\qquad \norm{h}_{X_{s-r_2^1-p_5^0 + \varkappa^1}}\norm{h_1}_{X_{s-r_2^1-p_4^1 + \varkappa^2}}.
    \end{align*}

    Finally, for the third claim, we write
    \[
        \Delta_h f[x] - Df[x]h =\Delta_{\Delta_hy[x]}z[y[x]]-Dz[y[x]\Delta_h y[x]] + Dz[y[x]]\parn*{\Delta_h y[x]-Dy[x]h},
    \]
    where
    \begin{align*}
        \norm{\Delta_{\Delta_hy[x]}z[y[x]]-Dz[y[x]\Delta_h y[x]]}_{Z_{s-r_3^0}} \qquad & \equalop_{\mathclap{\eqref{eq:app_z_diff}}}^{\mathclap{\substack{r_3^0\geq q_3^0      \\t_3^0+r_3^0- q_3^0< s \leq \mf{s}}}}\qquad o\parn[\big]{\norm{\Delta_h y[x]}_{Y_{s-r_3^0+q_3^0-q_7^0}}}\\
        \qquad                                                                         & \equalop_{\mathclap{\eqref{eq:app_y_lip}}}^{\mathclap{\substack{r_3^0\geq \varkappa^4 \\s_2^0 + r_3^0 - \varkappa^4 < s \leq \mf{s}}}}\qquad o\parn[\big]{\norm{h}_{X_{s-r_3^0-p_5^0+\varkappa^4}}}
    \end{align*}
    and
    \begin{align*}
        \norm{Dz[y[x]]\parn*{\Delta_h y[x]-Dy[x]h}}_{Z_{s-r_3^0}} \qquad & \lesssimop_{\mathclap{\eqref{eq:app_z_bdd}}}^{\mathclap{\substack{r_3^0\geq q_1^1      \\t_1^1 +r_3^0- q_1^1< s \leq \mf{s}}}}\qquad \norm{\Delta_hy-Dy[x]h}_{Y_{s-r_3^0+q_1^1-q_4^1}}\\
        \qquad                                                           & \equalop_{\mathclap{\eqref{eq:app_y_diff}}}^{\mathclap{\substack{r_3^0\geq \varkappa^5 \\s_3^0 + r_3^0 - \varkappa^5 < s \leq \mf{s}}}}\qquad o\parn[\big]{\norm{h}_{X_{s-r_3^0-p_7^0+\varkappa^5}}}.\qedhere
    \end{align*}
\end{proof}

We say that~\eqref{eq:app_y_bdd}--\eqref{eq:app_z_diff} hold for $k$ if they hold for all $j=0,\ldots,k-1$, and~\eqref{eq:app_y_bdd}--\eqref{eq:app_z_lip} also hold for $j=k$. Similarly, we say that~\eqref{eq:app_f_bdd}--\eqref{eq:app_f_diff} hold for $k$ if they hold for all $j=0,\ldots,k-1$, and~\eqref{eq:app_f_bdd} and \eqref{eq:app_f_lip} also hold for $j=k$. This allows us to extend \Cref{Prop:composition} to higher order derivatives in a special case.
\begin{corollary}
    \label{Cor:composition}
    If, in addition to the assumptions in \Cref{Prop:composition}, we have that~\eqref{eq:app_y_bdd}--\eqref{eq:app_z_diff} hold for $k$ with
    \[
        p_i^j = q_i^j =
        \begin{cases}
            i+j-1 & i \leq 3, \\
            1     & i \geq 4
        \end{cases}
        \qquad
        \text{and}
        \qquad
        s_i^j = t_i^j = i + j
    \]
    for all applicable $i$ and $j$, then~\eqref{eq:app_f_bdd}--\eqref{eq:app_f_diff} hold for $k$ with $r_i^j=p_i^j$ and $\varsigma_i^j=s_i^j$.
\end{corollary}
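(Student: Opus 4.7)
The plan is induction on $k$, exploiting the fact that the specific loss pattern in the hypothesis is precisely the one that is closed under composition, so that Proposition~\ref{Prop:composition} can be applied at each level and reproduces the same pattern.

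\textbf{Base case $k=1$.} Here the conclusion is already contained in Proposition~\ref{Prop:composition}; the only task is to verify that substituting the specific pattern into the formulas for $r_i^j$ and $\varsigma_i^j$ yields $r_i^j = p_i^j$ and $\varsigma_i^j = s_i^j$ in every case. I would do this by a direct substitution: since $q_4^1 = q_5^0 = 1$ (from the $i \geq 4$ branch of the pattern), one finds $\delta^1 = p_1^1 - q_4^1 = 0$ and $\delta^2 = p_2^0 - q_5^0 = 0$, hence $\delta^1_\pm = \delta^2_\pm = 0$. Similarly each of $\varkappa^1,\ldots,\varkappa^5$ evaluates to $2$. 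Plugging these into the formulas line by line produces the claimed identifications.

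\textbf{Inductive step.} Assume the conclusion holds at order $k-1$. I would then define $D^k f$ via Faà di Bruno's formula \eqref{eq:faa_di_bruno} and verify \eqref{eq:app_f_bdd}, \eqref{eq:app_f_lip} at $j = k$ and \eqref{eq:app_f_diff} at $j = k-1$ summand by summand. For a partition $\pi$ with blocks of sizes $b_i = \abs{\beta_i}$ summing to $k$, the summand
\[
    D^{\abs{\pi}}z[y[x]]\bigl(D^{b_1}y[x](h_{\beta_1}),\, \ldots,\, D^{b_m}y[x](h_{\beta_m})\bigr)
\]
is bounded as follows: each $D^{b_i}y[x]$ sends $(X_{s-1})^{b_i}$ into $Y_{s-b_i} \hookrightarrow Y_{s-1}$ by the inductive hypothesis applied to $y$, and then $D^{\abs{\pi}}z[y[x]]$ sends $(Y_{s-1})^{\abs{\pi}}$ into $Z_{s-\abs{\pi}} \hookrightarrow Z_{s-k}$ since $\abs{\pi} \leq k$. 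This yields \eqref{eq:app_f_bdd}. The Lipschitz estimate \eqref{eq:app_f_lip} and differentiability \eqref{eq:app_f_diff} are then obtained by telescoping each summand, i.e.\ incrementing one factor $D^{b_i}y$ or the outer $D^{\abs{\pi}}z$ at a time, applying \eqref{eq:app_y_lip}--\eqref{eq:app_z_diff} to the incremented factor, and bounding the remaining factors with \eqref{eq:app_y_bdd}, \eqref{eq:app_z_bdd}.

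\textbf{Main obstacle.} The principal technical difficulty is the combinatorial bookkeeping of loss indices across the Faà di Bruno expansion: one must track that every telescoped term lands in $Z_{s-k}$ (respectively $Z_{s-(k+1)}$ for Lipschitz, $Z_{s-(k+2)}$ for differentiability), and that the input side of each term loses only $1$ derivative in each $X$-variable. This is exactly what the chosen pattern accommodates: the losses $i+j-1$ for $i \leq 3$ add up correctly with $\abs{\pi}$ and $\abs{\beta_i}$ as one composes, while the uniform loss $1$ for $i \geq 4$ (covering all argument and auxiliary positions) is preserved by every composition step. Equivalently, each application of Proposition~\ref{Prop:composition} at the inductive step reproduces the same pattern for $f$ as for $y$ and $z$, and so the induction closes.
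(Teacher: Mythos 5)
Your overall strategy --- induction on $k$, base case by direct substitution into \Cref{Prop:composition} (your computations $\delta^1=\delta^2=0$ and $\varkappa^1=\dots=\varkappa^5=2$ are correct), and higher derivatives handled summand by summand in the Faà di Bruno expansion with a telescoping argument for the Lipschitz and differentiability estimates --- is essentially the paper's. The paper merely packages the telescoping differently: it recasts each Faà di Bruno summand as a composition $\zeta\circ\xi$ of two auxiliary maps (with shifted indices $s_i^j=t_i^j=i+j+m$) and reapplies \Cref{Prop:composition} to that composition.

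There is, however, a concrete error in your inductive step, and it lands exactly on the crux. In a \emph{decreasing} scale one has $Y_{s-1}\hookrightarrow Y_{s-b_i}$, not $Y_{s-b_i}\hookrightarrow Y_{s-1}$ as you write (you use the convention correctly two lines later when you assert $Z_{s-\abs{\pi}}\hookrightarrow Z_{s-k}$). The output of $D^{b_i}y[x]$ therefore lies only in the \emph{weaker} space $Y_{s-b_i}$, and it cannot be fed into $D^{\abs{\pi}}z[y[x]]$ viewed as an operator on $(Y_{s-1})^{\abs{\pi}}$. The correct bookkeeping is to apply \eqref{eq:app_z_bdd} at a shifted level, i.e.\ to regard $D^{\abs{\pi}}z[y[x]]$ as a bounded operator from $(Y_{s-k+\abs{\pi}-1})^{\abs{\pi}}$ into $Z_{s-k}$; the inputs $D^{\abs{\beta}}y[x](h_i)_{i\in\beta}\in Y_{s-\abs{\beta}}$ are then admissible precisely because every partition $\pi$ of $\{1,\dots,k\}$ satisfies
\[
\max_{\beta\in\pi}\abs{\beta}+\abs{\pi}-1\le\sum_{\beta\in\pi}\abs{\beta}=k,
\]
and the validity range $t_1^{\abs{\pi}}<s-k+\abs{\pi}$ reduces to $s>k+1$, as required. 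This inequality is the reason the stated loss pattern is closed under composition, and the paper invokes it explicitly; your ``Main obstacle'' paragraph gestures at it, but the chain of embeddings you actually write down bypasses it and, taken literally, asserts a false intermediate claim. The same level shift is needed in each telescoped term of the Lipschitz and differentiability estimates. With this correction the argument closes and coincides with the paper's.
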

\begin{proof}
    For the case $k=1$, this follows immediately by applying \Cref{Prop:composition}. We proceed by showing that this result can be applied to the derivatives of $f$ up to order $k-1$, as defined by~\eqref{eq:faa_di_bruno}.

    From~\eqref{eq:faa_di_bruno} we know that, for $1 \leq m \leq k-1$, the derivative $D^m f[x](h_i)_{i \in I_m}$ is given by a sum of terms of the form
    \[
        D^{\abs{\pi}} z[y[x]]\parn[\big]{D^{\abs{\beta}}y[x](h_i)_{i \in \beta}}_{\beta \in \pi},
    \]
    where $\pi \in \mc{P}_m$. Any such term can be viewed as a composition of functions
    \[
        \mc{X}_\mf{s} \xrightarrow{\xi}\, \mc{O}_\mf{s} \xrightarrow{\zeta} \mc{L}^m(Z_{\mf{s}-1},Z_{\mf{s}-m}), \qquad
        \mc{O}_\mf{s} \subset Y_\mf{s} \times \prod_{\beta \in \pi} \mc{L}^{\abs{\beta}}(X_{\mf{s}-1},Y_{\mf{s}-\abs{\beta}})
    \]
    in a natural way.

    By the properties of the derivatives of $y$ and $z$, we find that $\xi$ and $\zeta$ satisfy~\eqref{eq:app_y_bdd}--\eqref{eq:app_z_diff} when $k=1$; with $p_i^j$, $q_i^j$ as before, but $s_i^j=t_i^j=j+i+m$. For $\xi$, and the first variable of $\zeta$, we immediately get the desired properties.
    Moreover, $\zeta$ is multilinear with respect to the other variables, and satisfies the inequality
    \[
        \norm{\zeta[y,(y_\beta)_{\beta \in \pi}]}_{\mc{L}^m(X_{s-1},Z_{s-m})} \lesssim \norm{D^{\abs{\pi}}z[y]}_{\mc{L}^{\abs{\pi}}(Y_{s-m+\abs{\pi}-1},Z_{s-m})} \prod_{\beta \in \pi} \norm{y_\beta}_{\mc{L}^{\abs{\beta}}(X_{s-1},Y_{s-\abs{\beta}})}
    \]
    where the norm in front of the product is $\lesssim 1$ for all $m+1<s\leq \mf{s}$ by~\eqref{eq:app_z_bdd}. Here we have used that
    \[
        m = \sum_{\beta \in \pi} \abs{\beta} \geq \max_{\beta \in \pi}{\abs{\beta}} + \abs{\pi}-1.
    \]

    Applying \Cref{Prop:composition} now tells us that~\eqref{eq:app_f_bdd}--\eqref{eq:app_f_diff} hold for $k=1$ for the composition of $\xi$ and $\zeta$ with $r_i^j=p_i^j$ and $\varsigma_i^j=j+i+m$, which concludes the proof.
\end{proof}
\begin{remark}
    \label{Rem:composition}
    Clearly, we can also use this result when one of the two mappings has better regularity properties; such as if $p_i^j \geq 1$ for $i\geq 4$, and all other parameters are unchanged. This can, for instance, be useful if the inner mapping is twice continuously differentiable from $\mc X_s$ to $\mc Y_s$ in the usual sense, with Lipschitz continuous derivatives. Then we could take all the parameters as above, but with $p_4^j=p_1^j$, $p_6^j=p_5^j=p_2^j$ and $p_8^j=p_7^j=p_3^j$ (note that $p_4^0$ has no meaning). An example of where the corollary is used in this way is in \Cref{lemma:T_deriv}.
\end{remark}
\section{Formal differentiability of fixed points}
\label{Appendix:fixed_points}
Like in \Cref{Appendix:composition}, let $(X_s)$ and $(Y_s)$ be decreasing scales of Banach spaces, define $Z_s = X_s \times Y_s$, and fix an index $\mf{s} \in \R$. Suppose that $\mc{X}_{\mf{s}} \subset X_{\mf{s}}$ and $\mc{Y}_{\mf{s}} \subset Y_{\mf{s}}$ are subsets, on which we have a map
\[
    \map{B}{\mc{X}_{\mf{s}} \times \mc{Y}_{\mf{s}}}{X_{\mf{s}}}
\]
with formal derivative
\[
    \map{D^j B}{\mc{X}_{\mf{s}} \times \mc{Y}_{\mf{s}}}{\mc{L}^j\parn[\big]{Z_{\mf{s}-p_4^j}, X_{\mf{s}-p_1^j}}}
\]
for $j=1$. Furthermore, assume that
\begin{align}
    \norm{D^j B[x,y]}_{\mc{L}^j\parn[\big]{Z_{s-p_4^j},X_{s-p_1^j}}}                          & \lesssim \varepsilon,                       &  & s_1^j< s\leq \mf{s},
    \label{eq:app_B_bbd}                                                                                                                                              \\
    \norm{\Delta_h D^j B[x,y]}_{\mc{L}^j\parn[\big]{Z_{s-p_6^j},X_{s-p_2^j}}}                 & \lesssim \varepsilon\norm{h}_{Z_{s-p_5^j}}, &  & s_2^j< s< \mf{s},
    \label{eq:app_B_lip}                                                                                                                                              \\
    \shortintertext{for $j=0,1$, and}
    \norm{\Delta_h D^j B[x,y]-D^{j+1} B[x,y]h}_{\mc{L}^j\parn[\big]{Z_{s-p_8^j},X_{s-p_3^j}}} & = o\parn[\big]{\norm{h}_{Z_{s-p_7^j}}},     &  & s_3^j< s\leq \mf{s},
    \label{eq:app_B_diff}
\end{align}
for $j=0$. Here $p_i^j\geq 0$, and these properties are assumed to hold uniformly for $(x,y),(x,y)+h\in \mc{X}_{\mf{s}}\times \mc{Y}_{\mf{s}}$.

Moreover, assume that $B$ has a fixed point $\map{x}{\mc{Y}_\mf{s}}{\mc{X}_\mf{s}}$; in the sense that
\begin{equation}
    \label{eq:fixed_point}
    B[x[y],y] = x[y]
\end{equation}
for all $y \in \mc{Y}_\mf{s}$. If $\varepsilon$ is sufficiently small, and $p_4^1 = p_1^1$, we can \emph{define} $\map{Dx}{\mc{Y}_\mf{s}}{\mc{L}(Y_{\mf{s}-p_4^1},X_{\mf{s}-p_1^1})}$ through
\begin{equation}
    \label{eq:fixed_point_derivative}
    Dx[y] \ceq (\id-\partial_xB[x[y],y])^{-1} \partial_y B[x[y],y],
\end{equation}
where the inverse can be given by a Neumann series.

\begin{proposition}
    \label{Prop:fixed_point_diff}
    Assume that $p_1^0=0$, $p_4^1=p_1^1$, $p_2^0=p_5^0$, and that $\varepsilon$ is sufficiently small. Then the operators $D^j x$ satisfy:
    \begin{equation}
        \label{eq:app_fix_bdd}
        \norm{D^j x[y]}_{\mc{L}^j\parn[\big]{Y_{s-r_4^j},X_{s-r_1^j}}}\lesssim \varepsilon, \qquad t_1^j< s\leq \mf{s},
    \end{equation}
    for $j=0,1$ if
    \begin{align*}
        r_1^0        & =0,     \\
        t_1^0        & =s_1^0, \\
        \shortintertext{and $j=1$ if}
        r_1^1 =r_4^1 & =p_1^1, \\
        t_1^1        & =s_1^1.
    \end{align*}
    \begin{equation}
        \label{eq:app_fix_lip}
        \norm{\Delta_h D^j x[y]}_{\mc{L}^j\parn[\big]{Y_{s-r_6^j},X_{s-r_2^j}}}\lesssim \varepsilon\norm{h}_{Y_{s-r_5^j}},\qquad t_2^j< s\leq \mf{s},
    \end{equation}
    for $j=0$ if
    \begin{align*}
        r_2^0 = r_5^0 & =p_2^0,                                                                              \\
        t_2^0         & =s_2^0,                                                                              \\
        \shortintertext{and $j=1$ if}
        r_2^1         & =\max\brac{p_1^1,p_2^1,p_2^1-p_5^1+p_2^0,p_2^1-p_6^1+p_1^1},                         \\
        r_5^1         & =r_2^1-p_2^1+p_5^1,                                                                  \\
        r_6^1         & =r_2^1-p_2^1+p_6^1,                                                                  \\
        t_2^1         & =\max\brac{r_2^1+s_1^1-p_1^1,r_2^1+s_2^1-p_2^1,r_5^1+s_2^0-p_2^0,r_6^1+s_1^1-p_1^1}.
    \end{align*}
    \begin{equation}
        \label{eq:app_fix_diff}
        \norm{\Delta_h D^j x[y]-D^{j+1} x[y]h}_{\mc{L}^j\parn[\big]{Y_{s-r_8^j},X_{s-r_3^j}}}= o\parn[\big]{\norm{ h}_{Y_{s-r_7^j}}}, \qquad t_3^j< s\leq \mf{s},
    \end{equation}
    for $j=0$ if
    \begin{align*}
        r_3^0 & =\max\brac{
            p_1^1,
            p_3^0,
            p_3^0-p_7^0+p_2^0
        },                                                                         \\
        r_7^0 & =r_3^0-p_3^0+p_7^0,                                                \\
        t_3^0 & =\max\brac{r_3^0+s_1^1-p_1^1,r_3^0+s_3^0-p_3^0,r_7^0+s_2^0-p_2^0}.
    \end{align*}
\end{proposition}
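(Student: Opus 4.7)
The plan is to derive each estimate for $x$ from the corresponding estimate for $B$, using the fixed-point identity \eqref{eq:fixed_point} together with the explicit formula \eqref{eq:fixed_point_derivative}. I will proceed in the order: the $j=0$ case of \eqref{eq:app_fix_bdd}, then the $j=1$ case of the same, then the $j=0$ case of \eqref{eq:app_fix_lip}, then the $j=1$ case, and finally \eqref{eq:app_fix_diff}.

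The boundedness of $x$ itself follows directly from $\norm{x[y]}_{X_s} = \norm{B[x[y],y]}_{X_s} \lesssim \varepsilon$ via \eqref{eq:app_B_bbd} with $j=0$ and $p_1^0 = 0$. For $Dx$, the hypothesis $p_4^1 = p_1^1$ is crucial: it means that $\partial_x B[x[y],y]$ is an endomorphism of $X_{s-p_1^1}$ with operator norm $\lesssim \varepsilon$, so for $\varepsilon$ small, $\id - \partial_x B[x[y],y]$ is invertible via a Neumann series whose inverse has norm $\lesssim 1$ uniformly in $y$. Composing with $\partial_y B[x[y],y]$ via \eqref{eq:fixed_point_derivative} yields \eqref{eq:app_fix_bdd} for $j=1$, with $s_1^1 < s \leq \mf{s}$.

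For the Lipschitz estimate on $x$, I will subtract the fixed-point equations at $y_1$ and $y_2$ to obtain $x[y_1]-x[y_2] = \Delta_{(x[y_1]-x[y_2],\,y_1-y_2)} B[x[y_2], y_2]$ and apply \eqref{eq:app_B_lip} with $j=0$. The hypothesis $p_2^0 = p_5^0$ places the $x$-contribution on the right-hand side in the same space as the left, so it can be absorbed for small $\varepsilon$. The Lipschitz bound on $Dx$ will then follow from the telescoping identity
\[
    Dx[y_1] - Dx[y_2] = A[y_1]^{-1}\bigl(P[y_1] - P[y_2]\bigr) + A[y_1]^{-1}\bigl(A[y_2] - A[y_1]\bigr) A[y_2]^{-1} P[y_2],
\]
with $A[y] \ceq \id - \partial_x B[x[y],y]$ and $P[y] \ceq \partial_y B[x[y],y]$. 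The increments $P[y_1] - P[y_2]$ and $A[y_2] - A[y_1]$ are controlled by \eqref{eq:app_B_lip} with $j=1$, where the $x$-part of the argument increment is handled by the Lipschitz bound on $x$ just established; this is precisely where the maxima defining $r_2^1, r_5^1, r_6^1, t_2^1$ arise.

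For the differentiability \eqref{eq:app_fix_diff}, I set $w \ceq x[y+h] - x[y]$ and start from $w = \Delta_{(w,h)} B[x[y], y]$. Applying \eqref{eq:app_B_diff} with $j=0$ gives $w = DB[x[y],y](w,h) + o(\norm{(w,h)})$, and the Lipschitz bound on $x$ (together with $p_2^0 = p_5^0$) turns the $w$-part of the remainder into a contribution of order $\norm{h}$. Rearranging yields $A[y] w = P[y] h + o(\norm{h})$, and applying $A[y]^{-1}$ produces $w = Dx[y] h + o(\norm{h})$; the fact that this inversion does not lose derivatives is again due to $p_4^1 = p_1^1$. The main obstacle will be purely one of bookkeeping: each composition forces a choice of intermediate level in the scale, and the maxima defining $r_3^0, r_7^0, t_3^0$ record the worst such choice over the three ingredients (Neumann inversion, use of \eqref{eq:app_B_diff}, and reduction to the Lipschitz estimate on $x$). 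The structural ideas themselves — Neumann inversion of $A[y]$, absorption via $p_2^0 = p_5^0$, and the splitting identity for $Dx[y_1]-Dx[y_2]$ — are standard; all the work lies in tracking the indices sharply.
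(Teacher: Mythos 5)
Your proposal follows essentially the same route as the paper: boundedness of $x$ from the fixed-point identity, Neumann inversion of $\id-\partial_xB$ for $Dx$ (using $p_4^1=p_1^1$), absorption via $p_2^0=p_5^0$ for the Lipschitz bound on $x$, a resolvent-type splitting for $\Delta_h Dx[y]$ (the paper packages your two-term telescoping identity as $(\id-\partial_xB[x[y],y])^{-1}\Delta_{(\Delta_hx[y],h)}DB[x[y],y](Dx[y+h],\id)$, which is algebraically the same), and the identity $\Delta_h x[y]-Dx[y]h=(\id-\partial_xB[x[y],y])^{-1}(\Delta_{(\Delta_hx[y],h)}B[x[y],y]-DB[x[y],y](\Delta_hx[y],h))$ for differentiability. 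The only thing you leave implicit is the explicit index arithmetic producing the stated values of $r_i^j$ and $t_i^j$, but the sources of each maximum are correctly identified, so the argument is sound.
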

\begin{proof}
    Since, by definition, $x$ is a fixed point of $B$, we have
    \[
        \norm{x[y] }_{X_s}=\norm{B[x[y],y]}_{X_s} \lesssimop_{\eqref{eq:app_B_bbd}}^{\mathclap{s_1^0 < s \leq \mf{s}}} \varepsilon,
    \]
    proving the first claim for $j=0$. For $j = 1$, we obtain
    \begin{align*}
        \norm{Dx[y]}_{\mc{L}\parn[\big]{Y_{s-p_4^1},X_{s-p_1^1}}} & \leq \parn[\Big]{1-\norm{\partial_xB[x[y],y]}_{\mc{L}\parn[\big]{X_{s-p_1^1}}}}^{-1}\norm{\partial_y B[x[y],y]}_{\mc{L}\parn[\big]{Y_{s-p_4^1},X_{s-p_1^1}}} \\
                                                                  & \lesssimop_{\eqref{eq:app_B_bbd}}^{\mathclap{s_1^1 < s \leq \mf{s}}} (1-\varepsilon)^{-1}\cdot \varepsilon \lesssim \varepsilon
    \end{align*}
    directly from~\eqref{eq:fixed_point_derivative}.

    For the second claim, we note that $\Delta_h x[y] = \Delta_{(\Delta_h x[y],h)}B[x[y],y]$, and therefore
    \[
        \norm{\Delta_h x[y]}_{X_{s-r_2^0}} \qquad \lesssimop_{\eqref{eq:app_B_lip}}^{\mathclap{\substack{r_2^0\geq p_2^0 \\s_2^0+r_2^0-p_2^0 < s \leq \mf{s}}}}\qquad \varepsilon \parn[\big]{\norm{\Delta_h x[y]}_{X_{s-r_2^0}}+\norm{h}_{Y_{s-r_2^0}}},
    \]
    which is sufficient for $j=0$. Similarly, for $j=1$, one may check by direct computation that
    \[
        \Delta_h Dx[y] = \parn*{\id - \partial_x B[x[y],y]}^{-1}\Delta_{(\Delta_hx[y],h)}DB[x[y],y](Dx[y+h],\id),
    \]
    where
    \begin{align*}
        \norm{\parn*{\id - \partial_x B[x[y],y]}^{-1}}_{\mc{L}\parn[\big]{X_{s-r_2^1}}}\qquad                            & \lesssimop_{\mathclap{\eqref{eq:app_B_bbd}}}^{\mathclap{\substack{r_2^1\geq p_1^1               \\s_1^1 + r_2^1 - p_1^1< s \leq \mf{s}}}}\qquad 1,\\
        \norm{\Delta_{(\Delta_hx[y],h)}DB[x[y],y]}_{\mc{L}^j\parn[\big]{Z_{s-r_2^1 + p_2^1 - p_6^1},X_{s-r_2^1}}} \qquad & \lesssimop_{\mathclap{\eqref{eq:app_B_lip}}}^{\mathclap{\substack{r_2^1\geq p_2^1               \\s_2^1 + r_2^1 - p_2^1< s \leq \mf{s}}}}\qquad \varepsilon\parn[\big]{\norm{\Delta_hx[y]}_{X_{s-r_2^1+p_2^1 - p_5^1}} + \norm{h}_{Y_{s-r_2^1+p_2^1 - p_5^1}}},\\
        \qquad                                                                                                           & \lesssimop_{\mathclap{\eqref{eq:app_fix_lip}}}^{\mathclap{\substack{r_2^1\geq p_2^1-p_5^1+p_2^0 \\s_2^0 + r_2^1 - p_2^1 + p_5^1 -p_2^0< s \leq \mf{s}}}}\qquad \varepsilon \norm{h}_{Y_{s-r_2^1+p_2^1 - p_5^1}},\\
        \shortintertext{and}
        \norm{(Dx[y+h]h_1,h_1)}_{Z_{s-r_2^1+p_2^1-p_6^1}} \qquad                                                         & \lesssimop_{\mathclap{\eqref{eq:app_fix_bdd}}}^{\mathclap{\substack{r_2^1\geq p_2^1-p_6^1+p_1^1 \\s_1^1 + r_2^1 - p_2^1+p_6^1 - p_1^1< s \leq \mf{s}}}}\qquad \norm{h_1}_{Y_{s-r_2^1+p_2^1-p_6^1}}.
    \end{align*}

    Finally, for the third claim, we write
    \[
        \Delta_h x[y] - Dx[y]h = \parn[\big]{\id - \partial_x B[x[y],y]}^{-1}\parn[\Big]{\Delta_{(\Delta_h x[y],h)}B[x[y],y] - DB[x[y],y](\Delta_h x[y],h)},
    \]
    whence
    \begin{align*}
        \norm{\Delta_h x[y] - Dx[y]h}_{X_{s-r_3^0}} \qquad & \lesssimop_{\mathclap{\eqref{eq:app_B_bbd}}}^{\mathclap{\substack{r_3^0\geq p_1^1             \\s_1^1 + r_3^0 - p_1^1< s \leq \mf{s}}}}\qquad \norm{\Delta_{(\Delta_h x[y],h)}B[x[y],y] - DB[x[y],y](\Delta_h x[y],h)}_{X_{s-r_3^0}}\\
        \qquad                                             & \equalop_{\mathclap{\eqref{eq:app_B_diff}}}^{\mathclap{\substack{r_3^0\geq p_3^0              \\s_3^0 + r_3^0 -p_3^0< s \leq \mf{s}}}}\qquad o\parn[\big]{\norm{\Delta_h x[y]}_{X_{s-r_3^0 + p_3^0 - p_7^0}} + \norm{h}_{X_{s-r_3^0 + p_3^0 - p_7^0}}}\\
        \qquad                                             & \equalop_{\mathclap{\eqref{eq:app_fix_lip}}}^{\mathclap{\substack{r_3^0\geq p_3^0-p_7^0+p_2^0 \\s_2^0 + r_3^0 - p_3^0+p_7^0 -p_2^0< s \leq \mf{s}}}}\hspace{3.5em} o\parn[\big]{\norm{h}_{X_{s-r_3^0 + p_3^0 - p_7^0}}}. \qedhere
    \end{align*}
\end{proof}
We say that~\eqref{eq:app_B_bbd}--\eqref{eq:app_B_diff} hold for $k$ if they hold for all $j=0,1,\ldots,k-1$, and~\eqref{eq:app_B_bbd} and \eqref{eq:app_B_lip} also hold for $j=k$. Analogous terminology is used for~\eqref{eq:app_fix_bdd}--\eqref{eq:app_fix_diff}.

\begin{corollary}
    \label{Cor:fixedpoint}
    If, in addition to the assumptions of \Cref{Prop:fixed_point_diff}, we have that~\eqref{eq:app_B_bbd}--\eqref{eq:app_B_diff} hold for $k$ with
    \[
        p_i^j =
        \begin{cases}
            i+j-1 & i \leq 3, \\
            1     & i \geq 4
        \end{cases}
        \qquad
        \text{and}
        \qquad
        s_i^j = i + j
    \]
    for all applicable $i$, $j$, then~\eqref{eq:app_fix_bdd}--\eqref{eq:app_fix_diff} hold for $k$ with $r_i^j=p_i^j$ and $t_i^j=s_i^j$.
\end{corollary}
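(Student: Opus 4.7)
The plan is an induction on $k$. For the base case $k = 1$, I would apply \Cref{Prop:fixed_point_diff} directly and verify the parameters. Substituting $p_i^j = i + j - 1$ for $i \leq 3$, $p_i^j = 1$ for $i \geq 4$, and $s_i^j = i + j$, one checks the structural hypotheses $p_1^0 = 0$, $p_4^1 = p_1^1 = 1$, $p_2^0 = p_5^0 = 1$, and then that each $\max$ in the conclusion formulae for $r_i^j$ and $t_i^j$ is attained precisely at $p_i^j$ and $s_i^j$ respectively. This is a routine substitution: for instance, $r_2^1 = \max\{1,2,2-1+1,2-1+1\} = 2 = p_2^1$, and $t_2^1 = \max\{3,3,2,2\} = 3 = s_2^1$.

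For the inductive step, assume the assertion holds up to order $k - 1$. Differentiating the fixed-point relation \eqref{eq:fixed_point} once gives
\[
    Dx[y] = \Phi[x[y], y], \qquad \Phi[x, y] \ceq (\id - \partial_x B[x, y])^{-1} \partial_y B[x, y],
\]
from \eqref{eq:fixed_point_derivative}. Producing $D^k x$ with the claimed parameters is therefore equivalent to producing $k - 1$ formal derivatives of the composition $Dx = \Phi \circ (x, \id)$ that satisfy the pattern required for \Cref{Cor:composition} to apply at order $k$.

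The inner map $y \mapsto (x[y], y)$ admits $k - 1$ formal derivatives with the pattern by the inductive hypothesis. For the outer map $\Phi$, I would regard it as the Neumann series $(\id - \partial_x B)^{-1} = \sum_{n \geq 0} (\partial_x B)^n$ applied to $\partial_y B$: the hypothesis supplies $D^{j+1} B$ for $0 \leq j \leq k - 1$ with the correct pattern (since \eqref{eq:app_B_bbd}--\eqref{eq:app_B_diff} are assumed to hold for $k$), while $T \mapsto (\id - T)^{-1}$ is analytic on the ball $\norm{T} < 1$ and therefore possesses formal derivatives of all orders with \emph{no} loss of regularity, the series converging uniformly by the smallness of $\varepsilon$. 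Combining \Cref{Cor:composition} with \Cref{Rem:composition} to handle the analytic factor shows that $\Phi$ inherits the pattern up to order $k - 1$. A further application of \Cref{Cor:composition} to $Dx = \Phi \circ (x, \id)$ then produces $k - 1$ formal derivatives of $Dx$, i.e., $k$ formal derivatives of $x$, obeying the claimed bound, Lipschitz, and differentiability estimates.

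The main obstacle is to ensure that no parameter degradation accumulates through the induction, and this is precisely the content of \Cref{Cor:composition}: the pattern $p_i^j = i + j - 1$ for $i \leq 3$ (and $1$ otherwise) with $s_i^j = i + j$ is stable under composition. The only additional point requiring care is that $\partial_x B$ appears inside an inverse, but this is harmless because the inversion is analytic and hence contributes no regularity loss; all the loss in the construction of $D^k x$ therefore comes from the single derivative of the fixed-point equation plus the losses already built into $B$, which together match the target pattern exactly. Once this stability is accepted, the induction closes without any further estimates.
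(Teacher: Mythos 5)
Your base case coincides with the paper's (a direct application of \Cref{Prop:fixed_point_diff} plus the parameter check, which you carry out correctly), but your inductive step takes a genuinely different route. The paper defines $D^m x$ by writing out the Fa\`a di Bruno expansion of the \emph{undifferentiated} identity $x[y]=B[x[y],y]$, isolating the single term containing $D^m x$ (the partition $\pi=I_m$) and inverting $\id-\partial_x B$, so that the right-hand side manifestly involves only $D^{\abs{\beta}}x$ with $\abs{\beta}\le m-1$; the estimates then follow by ``repeated use of \Cref{Cor:composition} and its proof.'' You instead differentiate the fixed-point relation once, view $Dx=\Phi\circ(x,\id)$ with $\Phi=(\id-\partial_x B)^{-1}\partial_y B$, and apply \Cref{Cor:composition} at order $k-1$ to this composition. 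The two are equivalent in substance, and your reduction to the composition corollary is arguably cleaner, but it shifts the burden onto verifying that the outer map $\Phi$ satisfies the hypotheses: $\Phi$ takes values in an operator-valued scale $\mc{L}(Y_{s-1},X_{s-1})$ whose thresholds are displaced by one relative to the pattern $s_i^j=i+j$ in \Cref{Cor:composition}, and its derivatives involve $D^{j+1}B$ rather than $D^jB$, so one must invoke the \emph{proof} of \Cref{Cor:composition} (where the shifted indices $s_i^j=i+j+m$ are handled) rather than just its statement --- exactly as the paper does. Your remark that the Neumann-series inversion ``contributes no regularity loss'' is correct for the map $T\mapsto(\id-T)^{-1}$ between operator spaces, but the composite $y\mapsto(\id-\partial_xB[x[y],y])^{-1}$ still inherits the losses of $\partial_xB$ and of $x$, so the bookkeeping is not avoided, only relocated. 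The paper's formula has the advantage of making explicit the one fact the induction genuinely needs --- that only lower-order derivatives of $x$ appear on the right-hand side --- which in your version is present but implicit.
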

\begin{proof}
    For $k=1$, this follows by direct application of \Cref{Prop:fixed_point_diff}. Assuming the corollary is true for all $k\leq m-1$, where $m \geq 2$, we can define
    \[
        D^m x[y](h_i)_{i \in I_m} = \parn[\big]{\id - \partial_x B[x[y],y]}^{-1} \sum_{\mathclap{\pi \in \mc{P}_m \setminus \brac{I_m}}} D^{\abs{\pi}}B[x[y],y] \parn[\Big]{\parn[\big]{D^{\abs{\beta}}x[y](h_i)_{i \in \beta},h_\beta}}_{\beta \in \pi},
    \]
    where
    \[
        h_\beta \ceq
        \begin{cases}
            h_i & \beta = \brac{i},   \\
            0   & \abs{\beta} \geq 2.
        \end{cases}
    \]
    This is the formal $m$th derivative of $x$, by~\eqref{eq:faa_di_bruno} and \eqref{eq:fixed_point}. Note, crucially, that $\abs{\beta} \leq m-1$ for all $\beta$ appearing on the right-hand side. Repeated use of \Cref{Cor:composition}, and its proof, now shows that the corollary also holds for $k=m$.
\end{proof}

\printbibliography[title=References]
\end{document}